\documentclass[12pt]{amsart}
\usepackage{SSdefn}
\usepackage{comment}

\newcommand{\gen}{{\rm gen}}

\newcommand{\fgen}{{\rm fg}}

\newcommand{\FI}{\mathbf{FI}}

\DeclareMathOperator{\sh}{sh}

\author{Steven V Sam}
\address{Department of Mathematics, University of Wisconsin, Madison, WI}
\email{\href{mailto:svs@math.wisc.edu}{svs@math.wisc.edu}}
\urladdr{\url{http://math.wisc.edu/~svs/}}
\thanks{SS was supported by a Miller research fellowship and NSF grant DMS-1500069.}

\author{Andrew Snowden}
\address{Department of Mathematics, University of Michigan, Ann Arbor, MI}
\email{\href{mailto:asnowden@umich.edu}{asnowden@umich.edu}}
\urladdr{\url{http://www-personal.umich.edu/~asnowden/}}
\thanks{AS was supported by NSF grants DMS-1303082 and DMS-1453893 and a Sloan fellowship.}

\title{Regularity bounds for twisted commutative algebras}

\subjclass[2010]{%
13D02%   	Syzygies, resolutions, complexes
}

\date{April 5, 2017}

\begin{document}

\begin{abstract}
Let $A$ be the twisted commutative algebra freely generated by $d$ indeterminates of degree~1. We show that the regularity of an $A$-module can be bounded from the first $\lfloor \tfrac{1}{4} d^2 \rfloor+2$ terms of its minimal free resolution. This extends results of Church and Ellenberg from the $d=1$ case.
\end{abstract}

\maketitle

\section{Introduction}

\subsection{Statement of results}

Let $E$ be a $d$-dimensional complex vector space, and let $A=\Sym(E \otimes \bC^{\infty})$, a polynomial ring in infinitely many variables equipped with a natural action of $\GL_{\infty}$. We treat $A$ as a twisted commutative algebra (tca); in fact, it is the free tca on $d$ indeterminates of degree~1. Modules over $A$ are required to have a compatible action of $\GL_{\infty}$ satisfying the technical condition of polynomiality (see \S \ref{s:bg}).

Let $M$ be an $A$-module, and consider the minimal free resolution $F_{\bullet} \to M$. This is almost always infinite: in fact, it is infinite whenever $M$ is not projective. However, we showed in \cite{symu1} that, when $M$ is finitely generated, the resolution has strong finiteness properties: precisely, it has finitely many non-zero linear strands and each linear strand (after a mild manipulation) admits the structure of a finitely generated $A$-module. 

The purpose of this paper is to quantify the result of \cite{symu1}, at least in part. Let $t_i(M)$ be the maximal degree of a generator of $F_i$; this is independent of the choice of minimal resolution. We note that $t_0(M)$ is the maximal degree of a generator of $M$, and $t_1(M)$ the maximal degree of an essential relation between the generators. We define the {\bf regularity} of $M$, denoted $\reg(M)$, to be the minimum integer $\rho$ such that $t_i(M) \le \rho+i$ for all $i \ge 0$. Our result from \cite{symu1} implies that $\reg(M)$ is finite when $M$ is finitely generated. Our main result here is the following theorem:

\begin{theorem} \label{thm:main}
Let $M$ be an $A$-module. Then $\reg(M)$ can be bounded by a function of $t_i(M)$ for $0 \le i \le 1+\lfloor \tfrac{1}{4} d^2 \rfloor$.
\end{theorem}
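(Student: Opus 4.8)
The plan is to induct on $d$, using the Church--Ellenberg theorem as the base case $d=1$, and to make the qualitative finiteness from \cite{symu1} effective along the way. First I would reduce to the case that $M$ is finitely generated: an arbitrary $A$-module is the filtered union of its finitely generated submodules, and each $t_i$, hence $\reg$, is well behaved under such colimits. It is convenient to recall the reformulation that $\reg(M)$ is the top degree of the bigraded vector space $\bigoplus_{i,j}\operatorname{Tor}^A_i(M,\bC)_j$ under the diagonal grading $j-i$; the content of \cite{symu1} is that this object is bounded, and the task is to bound it by a function of $t_0(M),\dots,t_{1+\lfloor \tfrac14 d^2\rfloor}(M)$.

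For the inductive step, choose a hyperplane $E'\subset E$ and set $L=E/E'$, so that $A'=\Sym(E'\otimes\bC^{\infty})$ is a sub-tca (the free tca on $d-1$ indeterminates) and $\bar A=\Sym(L\otimes\bC^{\infty})=A/(E'\otimes\bC^{\infty})A$ is a quotient tca (the free tca on one indeterminate, whose module category is Schur--Weyl equivalent to $\FI$-modules, where Church--Ellenberg applies). Since $A\cong A'\otimes_{\bC}\bar A$, both $A'$ and $\bar A$ are $A$-algebras obtained by killing a regular sequence, $\operatorname{Tor}^A_\bullet(M,\bar A)\cong\operatorname{Tor}^{A'}_\bullet(M|_{A'},\bC)$ is naturally a $\bar A$-module, and $\operatorname{Tor}^A_\bullet(M,A')$ is an $A'$-module. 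I would then exploit the two change-of-rings (hyper-$\operatorname{Tor}$) spectral sequences
\[
\operatorname{Tor}^{\bar A}_p\bigl(\operatorname{Tor}^A_q(M,\bar A),\bC\bigr)\ \Longrightarrow\ \operatorname{Tor}^A_{p+q}(M,\bC),
\qquad
\operatorname{Tor}^{A'}_p\bigl(\operatorname{Tor}^A_q(M,A'),\bC\bigr)\ \Longrightarrow\ \operatorname{Tor}^A_{p+q}(M,\bC),
\]
applying the inductive hypothesis to the $A'$-modules $\operatorname{Tor}^A_q(M,A')$ (so their regularity is controlled by their first $1+\lfloor\tfrac14(d-1)^2\rfloor$ $\operatorname{Tor}$-numbers over $A'$) and Church--Ellenberg to the $\bar A$-modules $\operatorname{Tor}^A_q(M,\bar A)$ (controlled by their $0$th and $1$st $\operatorname{Tor}$-numbers over $\bar A$). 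The off-diagonal terms $E^2_{p,q}$ of these spectral sequences contribute to $\operatorname{Tor}^A_{p+q}(M,\bC)$; because the differentials into the rows $p=0,1$ only involve strictly smaller values of $q$, a strong induction on $q$ bounds the relevant $\operatorname{Tor}$-numbers of $\operatorname{Tor}^A_q(M,\bar A)$ and $\operatorname{Tor}^A_q(M,A')$ in terms of $t_i(M)$ for $i$ in a controlled range, with the finiteness of \cite{symu1} guaranteeing the recursion terminates; feeding these estimates back through the spectral sequences then bounds $\reg_A(M)$.

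The real difficulty, and the main obstacle, is the bookkeeping that shows the recursion closes using only $t_0(M),\dots,t_{1+\lfloor \tfrac14 d^2\rfloor}(M)$, rather than a number of terms growing uncontrollably with $d$ and with the recursion depth. This is where the quantity $\lfloor\tfrac14 d^2\rfloor=\max_{0\le k\le d}k(d-k)$ is forced: it is the worst case over $k$ of the regularity of the coordinate rings of the loci of rank $\le k$ in $E\otimes\bC^{\infty}$, which are the basic closed subschemes in the $\GL_{\infty}$-equivariant geometry of $A$ and therefore the building blocks that appear when $M$ is peeled apart by support. The key technical lemma is accordingly a sharp vanishing statement for the higher $\operatorname{Tor}$ of these rank-locus modules, obtainable from explicit Lascoux- and Eagon--Northcott-type resolutions, which pins the bound at exactly $1+\lfloor\tfrac14 d^2\rfloor$ terms; proving that lemma and organizing the spectral-sequence estimates around it is where essentially all of the work lies.
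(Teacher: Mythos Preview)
Your approach differs substantially from the paper's and has a genuine gap.

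The paper does not induct on $d$ or use Church--Ellenberg as input. It runs a descending induction on the rank $r$ through the rank stratification of $\Mod_A$, proving simultaneously that $\reg(\rR\Sigma_{\ge r}(M))$ and $\tau_r(\rR\Gamma_{\le r}(M))$ are finite once $\reg_{n+1}(M)$ is, where $n=\lfloor d^2/4\rfloor$. The bound $n=\max_r r(d-r)=\max_r\dim\Gr_r(E)$ enters because controlling the invariant $\rho_r(M)$ (a cohomology-vanishing threshold for the sheaves $\sh^r_\lambda(M)$ on $\Gr_r(E)$) requires a partial resolution of length $\dim\Gr_r(E)$ (Propositions~\ref{prop:regres} and~\ref{prop:regsh}), and the passage from $\tau_r$ to $\tau_{r-1}$ costs two further syzygies (Proposition~\ref{prop:taubd2}). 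No hyperplane section appears.

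The gap in your argument is precisely the closure of the spectral-sequence recursion, which you flag as ``the real difficulty'' but do not resolve. With $N_q=\Tor^A_q(M,\bar A)$ there are infinitely many $q$, and to invoke Church--Ellenberg on each $N_q$ you need $t_0^{\bar A}(N_q)$ and $t_1^{\bar A}(N_q)$. These are the degrees of $E^2_{0,q}$ and $E^2_{1,q}$, which are \emph{not} bounded by knowledge of the abutment $\Tor^A_{p+q}(M,\bC)$ in any finite range of $p+q$: the incoming differentials $d_r\colon E^r_{r,q-r+1}\to E^r_{0,q}$ can kill arbitrarily large pieces, so $E^2$ may be much larger than $E^\infty$. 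Your proposed ``key technical lemma'' on rank loci does not plug this hole, because it is disconnected from the hyperplane framework you set up: the number $k(d-k)$ is $\dim\Gr_k(E)$ and in the paper it governs sheaf cohomology on that Grassmannian, not the $A$-regularity of $A/\fa_k$, and it has no evident role in a codimension-one restriction $A\to\bar A$. A straightforward induction on $d$ would accumulate bounds across the steps rather than produce the single value $\lfloor d^2/4\rfloor$. (A smaller point: the opening reduction to finitely generated $M$ via submodules is invalid, since $t_i$ of a submodule is not bounded by $t_i$ of the ambient module; the paper handles non-finitely-generated modules directly via the sum--sup device of \S\ref{s:prelim}.)
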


We note that $M$ is not required to be finitely generated in this theorem! Because of this, the theorem can easily be extended to allow for $A$-modules over arbitrary characteristic~0 coefficient rings, or even (twisted) $A$-modules over non-affine schemes, see Proposition~\ref{prop:reduction}.

\begin{remark}
Church and Ellenberg \cite{ce} proved this theorem for $d=1$ in the language of $\FI$-modules (and over any coefficient ring, with no restriction on characteristic). Their work directly inspired this paper. Church \cite{church} later greatly simplified the proof in \cite{ce}. We note that in characteristic~0, the $d=1$ case can be recovered from the results in \cite{symc1}.
\end{remark}

In fact, our results go beyond Theorem~\ref{thm:main} and give bounds on the regularities (and other invariants) of various local cohomology groups of $M$. We confine ourselves here to stating a consequence of these finer results. In \cite{symu1}, we showed that the Grothendieck group $\rK(A)$ is free of rank $2^d$ over the ring of symmetric functions $\Lambda$. In fact, we gave a canonical isomorphism
\begin{displaymath}
\rK(A) = \bigoplus_{r=0}^d \Lambda \otimes \rK(\Gr_r(E)).
\end{displaymath}
Given a symmetric function $f \in \Lambda$, let $\deg(f)$ be the maximum value of $\vert \lambda \vert$ over partitions $\lambda$ for which the coefficient of the Schur function $s_{\lambda}$ in $f$ is non-zero. Given a class $c \in \rK(A)$, write $c=\sum_{i \in I} f_i e_i$ where $\{e_i\}_{i \in I}$ is a basis for $\bigoplus_r \rK(\Gr_r(E))$ and $f_i \in \Lambda$, and define $\deg(c)$ to be the maximum value of $\deg(f_i)$. We show:

\begin{theorem} \label{thm:deg}
Let $M$ be a finitely generated $A$-module, and let $c=[M]$ be its class in $\rK(A)$. Then $\deg(c)$ can be bounded by a function of $t_i(M)$, for $i \le 1+\lfloor \tfrac{1}{4} d^2 \rfloor$.
\end{theorem}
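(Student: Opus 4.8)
The plan is to deduce Theorem~\ref{thm:deg} from Theorem~\ref{thm:main}, its refinements, and an induction on $d$. Recall that the isomorphism $\rK(A) = \bigoplus_{r=0}^d \Lambda \otimes \rK(\Gr_r(E))$ reflects the stratification of $\Spec A$ by the rank $r$ of the ``generic matrix'': there is a finite filtration $0 = \rK_{-1} \subseteq \rK_0 \subseteq \cdots \subseteq \rK_d = \rK(A)$ by the $\GL_\infty$-equivariant Serre subcategories of modules supported in rank $\le r$, with graded pieces $\rK_r / \rK_{r-1} \cong \Lambda \otimes \rK(\Gr_r(E))$, and the image $c_r$ of $c = [M]$ in the $r$-th graded piece is the class of the generic rank-$r$ localization $M^{(r)}$ of $M$. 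Since $\deg(c) = \max_{0 \le r \le d} \deg(c_r)$, it suffices to bound each $\deg(c_r)$ by a function of $t_i(M)$ for $i \le \lfloor \tfrac{1}{4} d^2 \rfloor + 1$.

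For the open stratum $r = d$, we have $\Gr_d(E) = \mathrm{pt}$, and $M^{(d)}$ is a localization of $M$, hence a quotient of a localization of a free module generated in degrees $\le t_0(M)$; using Cauchy's formula to compute the class of the generic localization of a free module, one sees directly that $\deg(c_d)$ is bounded by a function of $t_0(M)$, and $t_0(M) \le \reg(M)$. For $r < d$, the module $M^{(r)}$ is, up to tensoring with a coherent sheaf on $\Gr_r(E)$, a finitely generated module over the free tca on $r$ indeterminates of degree $1$; applying the case $d' = r < d$ of the theorem (induction on $d$, the base case $d = 0$ being the elementary fact that $\deg([N])$ is the top nonvanishing internal degree of a finite-length $\GL_\infty$-representation $N$, which equals $\reg(N) = t_0(N)$) bounds $\deg(c_r)$ by a function of the $t_i(M^{(r)})$ with $i \le \lfloor \tfrac{1}{4} r^2 \rfloor + 1$, hence by a function of $\reg(M^{(r)})$, since $t_i(M^{(r)}) \le \reg(M^{(r)}) + i$.

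It therefore remains to bound $\reg(M^{(r)})$, for $r < d$, by the invariants of $M$. This is precisely the role of the sharper results alluded to above: $M^{(r)}$ is constructed from $M$ by finitely many passages to derived sections and local cohomology, quotients, and restrictions of scalars to free tca's on fewer indeterminates (the finiteness of which is part of the structure theory of \cite{symu1}), and those results bound how regularity and generation degrees change under each such operation in terms of $\reg(M)$ and $t_0(M)$. Feeding in Theorem~\ref{thm:main}, which bounds $\reg(M)$ by a function of $t_i(M)$ for $i \le \lfloor \tfrac{1}{4} d^2 \rfloor + 1$, and assembling the estimates above yields the theorem.

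The main obstacle is not this deduction but the sharper local-cohomology bounds it rests on; granting those, the remaining difficulties are bookkeeping: making ``generic localization along the rank-$r$ stratum'' precise at the level of Grothendieck classes and checking its compatibility with the $\Lambda$-action and with the identification $\rK_r/\rK_{r-1} \cong \Lambda \otimes \rK(\Gr_r(E))$ --- which is where the $\GL_d$-equivariant geometry of the Grassmannians genuinely enters, in contrast with the ``linear-strand'' arguments underlying Theorem~\ref{thm:main} --- and verifying that the induction on $d$ closes using only the terms $t_i(M)$ with $i \le \lfloor \tfrac{1}{4} d^2 \rfloor + 1$, rather than accumulating dependence on deeper terms of the resolution as one descends through the strata.
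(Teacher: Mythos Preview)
Your approach is genuinely different from the paper's, and considerably more roundabout. The paper does not use induction on $d$. Instead, it observes that the projection of $[M]$ onto $\Lambda \otimes \rK(\Gr_r(E))$ is computed by $\rR\Pi_r = \rR\Sigma_{\ge r}\rR\Gamma_{\le r}$, and that under the equivalence $\Psi\colon \Mod_{A,r}[\fa_r] \to \Mod_B^{\gen}$ the graded pieces of the $\gr^r_\mu$-filtration on $\rR\Gamma_{\le r}(M)$ correspond to $\sh^r_\mu(M)\otimes\bS_\mu(\cK)$ (Corollary~\ref{cor:sh-K}). The degree of such a piece in $\Lambda\otimes\rK(Y)$ is simply $|\mu|$, so $\deg(c_r)$ is bounded by the largest $|\mu|$ occurring, which is at most $\tau_r(\rR\Gamma_{\le r}(M))$. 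Theorem~\ref{thm:main2} bounds this quantity directly from $\reg_{n+1}(M)$. That is the entire argument: no induction, no intermediate bound on $\reg(M^{(r)})$, no appeal to the theorem for smaller $d$.

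Your proposed route has real gaps. First, the description of $M^{(r)}$ as ``up to tensoring with a coherent sheaf on $\Gr_r(E)$, a finitely generated module over the free tca on $r$ indeterminates'' is not correct: the object of $\Mod_{A,r}\cong\Mod_B^{\gen}$ only has this shape on the graded pieces of the $\gr^r_\mu$-filtration, not globally, and those pieces live over $Y=\Gr_r(E)$ rather than a point. Second, your induction needs Theorem~\ref{thm:deg} for $\bA(\cQ)$-modules over $Y$, which the paper never states; Proposition~\ref{prop:reduction} only extends Theorem~\ref{thm:main}. Third, and most importantly, the sentence ``those results bound how regularity and generation degrees change under each such operation'' is where the entire content lies, and you have not supplied it. To bound $\reg(M^{(r)})$ in any reasonable interpretation (e.g.\ $\reg$ of $\rR\Pi_r(M)$ over $A$, or of its avatar over $B$) you would invoke Proposition~\ref{prop:regsat}, whose hypothesis is precisely that $\tau_r(\rR\Gamma_{\le r}(M))$ be finite. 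But once you know that, you already have $\deg(c_r)$ for free by the paper's one-line observation above, and the induction on $d$ is superfluous.
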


As a corollary, we obtain similar bounds for any invariant of $A$-modules that factors through the Grothendieck group. For example, one can associate to $M$ a Hilbert series $\rH_M$, which is known to be rational \cite[Theorem~3.1]{delta}. Theorem~\ref{thm:deg} implies that the degree of the numerator of this series can be bounded from the $t_i(M)$ for $i \le 1+\lfloor \tfrac{1}{4} d^2 \rfloor$. (There are only finitely many possibilities for the denominator.)

\subsection{Applications}

We now give an application of Theorem~\ref{thm:main}. Let $M$ be an $A$-module. Treating $A$ and $M$ as Schur functors, let $A_n=A(\bC^n)$ and $M_n=M(\bC^n)$. Then $A_n$ is a polynomial ring in $nd$ variables and $M_n$ is a $\GL_n$-equivariant module over $A_n$. One can think of $(M_n)$ as a kind of compatible sequence of modules over the sequence of rings $(A_n)$, and it is interesting to study the asymptotic properties of these modules. Let $\reg(M_n)$ be the regularity of $M_n$ as an $A_n$-module. 

For the next statement, the invariant $\ell(M)$ is defined in \S\ref{s:bg}. Alternatively, one can use the inequality $\ell(M) \le t_0(M)+d$.

\begin{theorem} \label{thm:stable}
There is a function $F_d(\rho)$ depending only on $d$ and $\rho$ such that $\reg(M_n)$, for any $M$ and $n$, is bounded above by $F_d(\reg(M_k))$, where $k =  \ell(M) + \lfloor \tfrac{1}{4} d^2 \rfloor + d +1$.
\end{theorem}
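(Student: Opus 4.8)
The plan is to reduce the statement about the finite-dimensional rings $A_n$ to a statement about the tca $A$, where Theorem~\ref{thm:main} and the structural results of \cite{symu1} apply. The key observation is that regularity of $M_n$ over $A_n$ is computed by the Tor groups $\Tor^{A_n}_i(M_n, \bC)$, and these are obtained from the tca-level Tor groups $\Tor^A_i(M, \bC)$ by applying the Schur functor $\bC^n$ (i.e.\ evaluating at $\bC^n$). The subtlety is that evaluation at $\bC^n$ is not exact on the relevant category once partitions have more than $n$ rows, so the finite-level Tor groups $\Tor^{A_n}_i(M_n,\bC)$ need not coincide with $(\Tor^A_i(M,\bC))_n$; however, they agree in a range controlled by $n$ and the number of rows of the partitions appearing, and more importantly one has a spectral sequence (or a mapping-cone/hyperhomology argument) relating them. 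The invariant $\ell(M)$ is precisely designed to control how many rows can appear, which is why $k$ is calibrated in terms of $\ell(M)$.

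Concretely, here are the steps I would carry out. First, recall from \cite{symu1} and the discussion preceding Theorem~\ref{thm:deg} that $\reg(M)$ (the tca regularity) together with $t_0(M)$ bounds the bidegrees (both the ``$A$-degree'' and the ``partition size'') of everything occurring in the minimal free resolution of $M$; by Theorem~\ref{thm:main}, $\reg(M)$ is itself bounded by the $t_i(M)$ for $i \le 1 + \lfloor \tfrac14 d^2\rfloor$, and these in turn are bounded by $\reg(M_k)$ for $k$ large enough — this is where one needs $k \ge \ell(M) + \lfloor \tfrac14 d^2\rfloor + d + 1$, since $t_i(M_k) = t_i(M)$ once $k$ exceeds the number of rows of partitions appearing in $F_i$, and the structure theory bounds that number of rows in terms of $i$, $\ell(M)$, and $d$. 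Second, having bounded the tca invariant $\reg(M)$, I would bound $\reg(M_n)$ for \emph{all} $n$ in terms of $\reg(M)$ (and $d$): one expands the minimal free $A$-resolution of $M$, evaluates at $\bC^n$ to get a (generally non-minimal, and possibly not acyclic in high degrees) complex of free $A_n$-modules resolving $M_n$ up to a controlled range, and then uses that the free $A_n$-modules $A_n \otimes \bS_\lambda(\bC^n)$ have regularity bounded purely in terms of $|\lambda|$ and $d$. Summing the contributions along the resolution and invoking the finitely-many-linear-strands property of \cite{symu1} (so that the sum is effectively finite) produces the desired function $F_d$.

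The main obstacle, and the place where real work is needed, is Step~2: controlling $\reg(M_n)$ \emph{uniformly in $n$} from the tca resolution. Evaluation at $\bC^n$ kills Schur functors $\bS_\lambda$ with more than $n$ rows, so the evaluated resolution $F_\bullet(\bC^n)$ is exact only in a range, and one must bound the error terms. I expect this to be handled by a hyperhomology spectral sequence
\begin{displaymath}
E_1^{p,q} = \Tor^{A_n}_q\big(F_p(\bC^n), \bC\big) \Longrightarrow \Tor^{A_n}_{q-p}(M_n, \bC),
\end{displaymath}
together with a bound on $\reg_{A_n}(F_p(\bC^n))$ in terms of $t_p(M)$ and $d$, plus a bound (in terms of $\ell(M)$, $t_p(M)$, and $d$) on how far out the non-exactness of $F_\bullet(\bC^n)$ can push a contribution. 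The fact that only finitely many $p$ contribute — equivalently, that $F_\bullet$ has finitely many nonzero linear strands — is what keeps $F_d$ a genuine (finite) function rather than a limit; without the main theorem of \cite{symu1} this step would fail. A secondary technical point is checking that the diagonal reindexing in the spectral sequence does not inflate the bound beyond a function of $\reg(M_k)$ alone, which is why the statement allows an arbitrary function $F_d$ rather than, say, a linear bound.
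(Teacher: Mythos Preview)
Your overall strategy matches the paper's: recover $\reg_N(M)$ (with $N=\lfloor \tfrac14 d^2\rfloor+1$) from $\reg(M_k)$, apply Theorem~\ref{thm:main} to bound $\reg(M)$, then bound every $\reg(M_n)$ by $\reg(M)$. Your Step~1 is right in spirit; the paper makes the row bound explicit by observing that $\Tor_i^A(M,\bC)$ is a subquotient of $\lw^i(E\otimes\bV)\otimes M$ via the Koszul complex, giving $\ell(\Tor_i^A(M,\bC))\le i+d+\ell(M)$, which is exactly what calibrates $k$.

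Your Step~2, however, rests on a misconception. You assert that evaluation at $\bC^n$ is not exact once partitions with more than $n$ rows appear, and then propose a hyperhomology spectral sequence to control the resulting ``error terms.'' In the paper's characteristic~0 setting this worry is unfounded: the category $\cV$ of polynomial representations is semisimple, so every short exact sequence splits and evaluation at $\bC^n$ is an exact functor (it may send some $\bS_\lambda(\bV)$ to zero, but that does not affect exactness). Consequently the evaluated minimal resolution $(F_\bullet)_n$ is a genuine free $A_n$-resolution of $M_n$, and it is still minimal since the differentials still land in the augmentation ideal. Hence $\Tor_i^{A_n}(M_n,\bC)=(\Tor_i^A(M,\bC))_n$ on the nose, $t_i(M_n)\le t_i(M)$, and $\reg(M_n)\le\reg(M)$ immediately. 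The paper disposes of this step in a single sentence. Your spectral sequence would of course collapse (only the $q=0$ row survives, since each $(F_p)_n$ is free), so your route is not wrong---but the elaborate machinery you set up, and the ``main obstacle'' you identify, are addressing a difficulty that does not exist.
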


\subsection{Open questions}

\begin{itemize}
\item Is the upper bound in Theorem~\ref{thm:main} optimal?
\item Assuming it is, what is the conceptual reason for this bound? Before proving the theorem, we thought a reasonable guess for the upper bound might be the Krull--Gabriel dimension of the category $\Mod_A$. But we showed in \cite{symu1} that this is $\binom{d+1}{2}$, which is (for $d$ large) more than twice as large as the bound in Theorem~\ref{thm:main}.
\item Can one give an explicit upper bound for $\reg(M)$ in terms of the relevant $t_i(M)$? With more work, a bound could be extracted from our proof, but it is likely far from optimal. At least for very small $d$, one could probably obtain reasonable bounds without too much additional effort. In the $d=1$ case, such a bound is given by Church and Ellenberg.
\item If $M$ is finitely generated, then $M$ has finite regularity, which implies that $n \mapsto \reg(M_n)$ is a weakly increasing function that is eventually constant, and equal to $\reg(M)$. For what value of $n$ does stabilization occur?
\item What is the relation between regularity and local cohomology for $A$-modules? For $d=1$ an explicit connection is established in \cite{regthm}.
\item Can Theorem~\ref{thm:main} be extended to positive characteristic? Note that in positive characteristic, it is not even known yet if regularity is finite.
\item Can we bound the regularity of each $\rR^i \Gamma_{\le r}(M)$ instead of just the complex $\rR \Gamma_{\le r}(M)$? (See Theorem~\ref{thm:main2} for context.)
\end{itemize}

\subsection{Outline}

In \S \ref{s:bg}, we recall background material from \cite{symu1} on the structure of $A$-modules. In \S \ref{s:prelim}, we introduce some invariants of $A$-modules and prove some basic results about them. In \S \ref{s:regbd}, we prove a number of inequalities involving regularity and the invariants introduced in \S \ref{s:prelim}, and eventually deduce the main theorem. Finally, in \S \ref{s:add}, we explain two additional results: an extension of the main theorem to arbitrary base schemes, and Theorem~\ref{thm:stable}.

\subsection{Notation}

We use notation as in \cite{symu1}. Throughout this paper, unless otherwise specified, $E$ is a $d$-dimensional complex vector space, $\bV = \bC^\infty$ is the vector representation of $\GL_\infty$, and $A$ is the tca $\bA(E) = \Sym(\bV \otimes E)$. We write $\cV$ for the category of polynomial representations of $\GL_\infty$. By $\Gr_r(E)$ we always mean the Grassmannian of $r$-dimensional quotients of $E$. We let $\cQ$ be the tautological rank $r$ quotient bundle on $\Gr_r(E)$. For a scheme $X$, we use the term ``$\cO_X$-module'' in place of ``quasi-coherent sheaf on $X$.''

\section{Background on tca's} \label{s:bg}

\subsection{Generalities on tca's}

Let $\bV=\bC^{\infty}=\bigcup_{n \ge 1} \bC^n$ be the standard representation of $\GL_{\infty}=\bigcup_{n \ge 1} \GL_n$. A representation of $\GL_{\infty}$ is {\bf polynomial} if it is a subquotient of a direct sum of tensor powers of $\bV$. We write $\cV$ for the category of polynomial representations. It is semi-simple abelian. The simple objects are the $\bS_{\lambda}(\bV)$, where $\bS_{\lambda}$ denotes the Schur functor associated to the partition $\lambda$. The category $\cV$ is closed under tensor products, and the tensor product of simple objects is computed with the Littlewood--Richardson rule.

Let $V \in \cV$. We say that a partition $\lambda$ {\bf occurs} in $V$ if the multiplicity of $\bS_{\lambda}(\bV)$ in $V$ is non-zero. We say that $V$ has {\bf finite degree} if there is an $N$ such that $\vert \lambda \vert < N$ for all $\lambda$ occurring in $V$. We let $\ell(\lambda)$ denote the number of rows in the partition $\lambda$, and define $\ell(V)$ to be the supremum of the $\ell(\lambda)$ over those $\lambda$ occurring in $V$.

For the purposes of this paper, a {\bf twisted commutative algbera} (tca) is a commutative algbera object in the tensor category $\cV$. Thus a tca is a  commutative associative unital $\bC$-algebra equipped with an action of $\GL_{\infty}$ by algebra automorphisms, under which it forms a polynomial representation. By a module over a tca $A$, we always mean a module object in $\cV$: such an object is a $\GL_{\infty}$-equivariant $A$-module for which the underlying $\GL_{\infty}$ representation is polynomial. The objects $A \otimes V$ with $V \in \cV$ are exactly the projective $A$-modules; we say that such a module has finite degree if $V$ does. We write $\Mod_A$ for the category of $A$-modules. It is a Grothendieck abelian category.

Let $E$ be a finite dimensional vector space of degree $d$. Then $\bA(E)=\Sym(E \otimes \bV)$ is a tca. In fact, it is the tca freely generated by $d$ elements of degree~1. These are the primary tca's of interest in this paper. We note that the category of $\bA(E)$-modules is locally noetherian (see, for example, \cite[Theorem~2.1]{symu1}). We also remark that the category of $A$-modules is equivalent to the category of $\FI_d$-modules \cite[Proposition 7.2.5]{catgb}.

More background on tca's can be found in \cite[\S 2]{symu1} and \cite{expos}.

\subsection{The structure of $A$-modules}

Fix $E$, and put $A=\bA(E)$. We briefly summarize the results from \cite{symu1} on the structure of $A$-modules. The reader should refer to \cite{symu1} for more complete statements.

Let $\fa_r \subset A$ be the $r$th determinantal ideal; it is generated by the representation $\lw^{r+1}(E) \otimes \lw^{r+1}(\bV) \subset A$. We let $\Mod_{A, \le r}$ be the subcategory of $\Mod_A$ consisting of modules supported on $V(\fa_r)$ (i.e., locally annihilated by a power of $\fa_r$). This defines a filtration of $\Mod_A$ that we call the {\bf rank stratification}. We define
\begin{displaymath}
\Mod_{A,>r} = \frac{\Mod_A}{\Mod_{A, \le r}}, \qquad \Mod_{A,r} = \frac{\Mod_{A,\le r}}{\Mod_{A,<r}},
\end{displaymath}
where the fraction notation indicates the Serre quotient. We sometimes write $\Mod_A^0$ in place of $\Mod_{A,\le 0}$ (the category of modules supported at~0) and $\Mod_A^{\gen}$ in place of $\Mod_{A, \ge d}$ (the generic category). We write
\begin{displaymath}
T_{>r} \colon \Mod_A \to \Mod_{A,>r}, \qquad S_{>r} \colon \Mod_{A,>r} \to \Mod_A
\end{displaymath}
for the localization functor and its right adjoint, the section functor, respectively. We note that $T_{>r}$ is exact and $S_{>r}$ is left-exact. We put $\Sigma_{>r}=S_{>r} \circ T_{>r}$ (the saturation functor). Finally, we let
\begin{displaymath}
\Gamma_{\le r} \colon \Mod_A \to \Mod_{A, \le r}
\end{displaymath}
be the functor assigning to an $A$-module $M$ the maximal submodule $\Gamma_{\le r}(M)$ of $M$ supported on $V(\fa_r)$. The general strategy employed in \cite{symu1} to understand $\Mod_A$ is to first understand the pieces $\Mod_{A,r}$, and then understand something about how $\Mod_A$ is built from them via the functors $\Gamma_{\le r}$ and $\Sigma_{\ge r}$.

We first study the category $\Mod_A^0$ \cite[\S 5.2]{symu1}, which is fairly easy to understand directly. All finitely generated modules in this category have finite length. The simple objects are just the irreducible polynomial representations of $\GL_{\infty}$, namely the $\bS_{\lambda}(\bV)$, with the maximal ideal of $A$ acting by~0. Thus the Grothendieck group $\rK(\Mod_A^0)$ is isomorphic to $\Lambda$, the ring of symmetric functions. Every finitely generated object has a finite length injective resolution, and the indecomposable injectives admit an explicit description \cite[Proposition~5.1]{symu1}.

We next examine the generic category \cite[\S 5.4]{symu1}. We show that $\Mod_A^{\gen}$ is equivalent to $\Mod_A^0$ \cite[Propositions~5.4,~5.6]{symu1}. In particular, all finitely generated objects have finite length and finite injective dimension. The indecomposable injective objects in $\Mod_A^{\gen}$ are exactly the localizations of the indecomposable projective objects of $\Mod_A$ (i.e., the modules $A \otimes \bS_{\lambda}(\bV)$) \cite[Corollary~5.7]{symu1}. The simple objects of $\Mod_A^{\gen}$ can be described as follows \cite[Corollary~5.7]{symu1}. There is a multiplication map $E \otimes \bV \otimes A \to A$, which induces a map $\bV \otimes A \to E^* \otimes A$. Let $\cK$ be the kernel of this map in $\Mod_A^{\gen}$. Then $\bS_{\lambda}(\cK)$ is simple, and every simple object has this form (for a unique $\lambda$).

Next, we consider the section functor $S_{\ge d} \colon \Mod_A^{\gen} \to \Mod_A$ \cite[\S 5.5]{symu1}. We connect this functor to the global sections functor on a Grassmannian (really an infinite dimensional one, though this is not said explicitly), and are thus able to compute the derived functors of $S_{\ge d}$ using Borel--Weil--Bott. With this perspective, we show that projective $A$-modules are derived saturated \cite[Corollary~5.17]{symu1}, that is, if $P$ is a projective $A$-module then the canonical map $P \to \Sigma_{\ge d}(P)$ is an isomorphism and $\rR^i \Sigma_{\ge d}(P)=0$ for $i>0$. We also compute the derived section functor on the simple objects $\bS_{\lambda}(\cK)$ \cite[Corollary~5.20]{symu1}.

We next study $\Mod_{A,r}$ for general $r$ \cite[\S 6.2]{symu1}. Every object in this category is locally annihilated by a power of $\fa_r$. We let $\Mod_{A,r}[\fa_r]$ be the full subcategory on objects annihilated by $\fa_r$. We give an explicit description of this subcategory in \cite[Theorem~6.5]{symu1}. Let $Y$ be the Grassmannian $\Gr_r(E)$, let $\cQ$ be the tautological rank $r$ bundle on $Y$, and let $B$ be the tca $\bA(\cQ)$ on $Y$. We show that $\Mod_{A,r}[\fa_r]$ is equivalent to $\Mod_B^{\gen}$. The results of the previous three paragraphs actually hold for tca's over general base schemes (with appropriate modifications) and can thus be applied to $\Mod_B^{\gen}$. For example, every object of $\Mod_B^{\gen}$ admits a filtration with graded pieces of the form $\cF \otimes \bS_{\lambda}(\cK)$, where $\cF$ is an $\cO_Y$-module and $\cK \in \Mod_B^{\gen}$ is as above. (This is very important for the present paper: we will have to understand the $\cO_Y$-modules $\cF$ quite well, in some respects.) In particular, we see that $\rK(\Mod_A)$ is isomorphic to $\Lambda \otimes \rK(Y)$, and thus free of rank $\binom{d}{r}$ as a $\Lambda$-module.

Next we study the section functor $S_{\ge r}$ on $\Mod_{A,r}[\fa_r]$. Let $S' \colon \Mod_B^{\gen} \to \Mod_B$ be the section functor for $B$ and let $\pi \colon Y \to \Spec(\bC)$ the structure map. Given $M \in \Mod_{A,r}[\fa_r]$ corresponding to $N \in \Mod_B^{\gen}$, we show \cite[Proposition~6.8]{symu1} that $\rR S_{\ge r}(M)$ is canonically isomorphic to $\rR \pi_* \rR S'(N)$. We have already explained how to compute $\rR S'$, and $\rR \pi_*$ is just usual sheaf cohomology on $Y$. This gives an effective way to compute $\rR S_{\ge r}$ on $\Mod_{A,r}[\fa_r]$, and is the primary tool for understanding $\rR S_{\ge r}$ in general.

Finally, we combine all of the above to prove our main theorems. We show \cite[Theorem~6.10]{symu1} that if $M$ is a finitely generated $A$-module then $\rR^i \Gamma_{\le r}(M)$ and $\rR^i \Gamma_{\ge r}(M)$ are finitely generated for all $i$ and $r$ and vanish for $i \gg 0$. As a consequence, we show that $\rD^b_{\fgen}(A)$, the bounded derived category with finitely generated cohomology, admits a semi-orthogonal decomposition into pieces equivalent to $\rD^b_{\fgen}(\Mod_{A,r})$. The functors $\rR \Gamma_{\le r}$ and $\rR \Sigma_{\ge r}$ should be thought of as truncating to the $\le r$ and $\ge r$ pieces in this decomposition. We define $\rR \Pi_r=\rR \Gamma_{\le r} \circ \rR \Sigma_{\ge r}$; it is effectively a projection onto the $r$th piece of the decomposition. As a corollary, we show that the Grothendieck group $\rK(A)$ is the direct sum of the Grothendieck groups $\rK(\Mod_{A,r})$, and thus free of rank $2^d$ as a $\Lambda$-module.

\section{Preliminary results} \label{s:prelim}

\subsection{General remarks on bounds}

In this section, $A$ is an arbitrary tca. By an {\bf invariant} of $A$-modules, we mean a rule $\alpha$ assigning to each $A$-module $M$ a quantity $\alpha(M) \in \bN \cup \{\infty\}$. We say that an invariant $\alpha$ satisfies the {\bf sum-sup condition} if for any collection $\{M_i\}_{i \in I}$ of $A$-modules we have
\begin{displaymath}
\alpha \big( \bigoplus M_i \big) = \sup_{i \in I}(\alpha(M_i)).
\end{displaymath}
We say that a module $M$ is {\bf $\alpha$-finite} if $\alpha(M)<\infty$. The following simple observation shapes the exposition of this paper:

\begin{proposition}
Let $\alpha$ and $\beta$ be two invariants satisfying the sum-sup condition. Then the following two statements are equivalent:
\begin{enumerate}[\rm \indent (a)]
\item Whenever $M$ is $\beta$-finite it is also $\alpha$-finite.
\item $\alpha$ is bounded by $\beta$; that is, there is a function $f \colon \bN \to \bN$ such that for any module $M$ with $\beta(M)$ finite we have $\alpha(M) \le f(\beta(M))$.
\end{enumerate}
\end{proposition}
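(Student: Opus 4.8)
The plan is to establish the two implications separately; (b) $\Rightarrow$ (a) is immediate, and (a) $\Rightarrow$ (b) is the only direction that uses the sum-sup hypothesis. For (b) $\Rightarrow$ (a): if $M$ is $\beta$-finite then $\beta(M) \in \bN$, so $f(\beta(M))$ is a finite natural number and $\alpha(M) \le f(\beta(M)) < \infty$, i.e.\ $M$ is $\alpha$-finite.

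For (a) $\Rightarrow$ (b) I would simply exhibit an explicit bounding function. Set
\[
f(n) = \sup\{\, \alpha(M) : M \text{ is an } A\text{-module with } \beta(M) \le n \,\} \in \bN \cup \{\infty\},
\]
which is visibly weakly increasing and, by construction, satisfies $\alpha(M) \le f(\beta(M))$ for every $\beta$-finite module $M$. It remains only to show $f(n) < \infty$ for all $n$, which I would do by contradiction. Suppose $f(n) = \infty$ for some $n$. Then for each $k \in \bN$ we may choose an $A$-module $M_k$ with $\beta(M_k) \le n$ and $\alpha(M_k) \ge k$; put $M = \bigoplus_{k \ge 0} M_k$. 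The sum-sup condition for $\beta$ gives $\beta(M) = \sup_k \beta(M_k) \le n$, so $M$ is $\beta$-finite, and then hypothesis (a) forces $\alpha(M) < \infty$. But the sum-sup condition for $\alpha$ gives $\alpha(M) = \sup_k \alpha(M_k) = \infty$, a contradiction. Hence $f$ takes values in $\bN$ and (b) holds.

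The argument has essentially no mathematical obstacle; the only point deserving care is set-theoretic. The collection of all $A$-modules with $\beta(M) \le n$ need not be a set, so taking the supremum defining $f(n)$ — and, more seriously, forming a direct sum indexed by that collection — should be avoided. This is precisely why I phrase the contradiction step using only a countable family $(M_k)_{k \ge 0}$ witnessing $\sup_k \alpha(M_k) = \infty$: a countable direct sum certainly exists in the Grothendieck category $\Mod_A$, and both sum-sup identities apply to it. With this phrasing the proof is complete, and no quantitative control on $f$ is obtained or needed.
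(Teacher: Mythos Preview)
Your proof is correct and follows essentially the same route as the paper: both argue the contrapositive of (a) $\Rightarrow$ (b) by choosing a countable family of modules with bounded $\beta$-value and unbounded $\alpha$-value, and then applying the sum-sup condition to their direct sum. Your version is slightly more detailed (and your set-theoretic caveat about avoiding a class-sized supremum is well taken), but the underlying idea is identical.
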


\begin{proof}
It is clear that (b) implies (a). Suppose now that (b) does not hold. Then there is a collection of modules $\{M_i\}$ with $\beta(M_i)$ constant and $\alpha(M_i)$ tending to $\infty$. But then $\beta(\bigoplus M_i)$ is finite and $\alpha(\bigoplus M_i)$ is infinite. Thus (a) does not hold.
\end{proof}

Note that in the proposition it is essential that we allow modules that are not finitely generated. In this paper, we prove that various invariants are bounded by other invariants. We always phrase our results as in the (a) statement above. This leads to a cleaner exposition since we do not have to produce the bounding function $f$. All of our proofs are in fact effective, and with additional work one could produce the bounding functions.

We make one more simple observation:

\begin{proposition} \label{prop:sumsupfin}
Let $\alpha$ be an invariant satisfying the sum-sup condition and let $F$ be an endofunctor of $\Mod_A$ that commutes with arbitrary direct sums. Suppose that $F(M)$ is $\alpha$-finite whenever $M$ is finitely generated. Let $V$ be a representation of $\GL_{\infty}$ of finite degree. Then $F(M \otimes V)$ is also $\alpha$-finite.
\end{proposition}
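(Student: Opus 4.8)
The plan is to reduce to the case where $V$ is a single simple object of $\cV$ and then combine the hypothesis on $F$ with the sum-sup condition on $\alpha$. Since $\cV$ is semisimple, I would first write $V \cong \bigoplus_{\lambda} \bS_{\lambda}(\bV)^{\oplus m_{\lambda}}$, where $\lambda$ ranges over partitions and $m_{\lambda} \in \bN \cup \{\infty\}$ is the multiplicity of $\bS_{\lambda}(\bV)$ in $V$. The assumption that $V$ has finite degree says exactly that $m_{\lambda} = 0$ unless $\vert \lambda \vert < N$ for some fixed $N$; as there are only finitely many partitions of size less than $N$, only finitely many $\lambda$ occur, say those in a finite set $S$.

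Because the tensor product over $\bC$ commutes with arbitrary direct sums, $M \otimes V \cong \bigoplus_{\lambda \in S} (M \otimes \bS_{\lambda}(\bV))^{\oplus m_{\lambda}}$, and since $F$ commutes with arbitrary direct sums, $F(M \otimes V) \cong \bigoplus_{\lambda \in S} F(M \otimes \bS_{\lambda}(\bV))^{\oplus m_{\lambda}}$. Applying the sum-sup condition, $\alpha(F(M \otimes V))$ equals the supremum over $\lambda \in S$ of $\alpha(F(M \otimes \bS_{\lambda}(\bV)))$ --- the multiplicities $m_{\lambda}$ play no role, since a supremum of equal quantities is just that quantity. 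As $S$ is finite, it therefore suffices to show that each $F(M \otimes \bS_{\lambda}(\bV))$ is $\alpha$-finite.

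Taking $M$ finitely generated, this follows from the hypothesis on $F$ once we know $M \otimes \bS_{\lambda}(\bV)$ is itself finitely generated. To see this, pick a surjection $A \otimes P \twoheadrightarrow M$ with $P \in \cV$ of finite length; tensoring with $\bS_{\lambda}(\bV)$ gives a surjection $A \otimes (P \otimes \bS_{\lambda}(\bV)) \twoheadrightarrow M \otimes \bS_{\lambda}(\bV)$, and $P \otimes \bS_{\lambda}(\bV)$ is still of finite length in $\cV$ because the tensor product of two simple objects is a finite direct sum of simples (Littlewood--Richardson). Hence $M \otimes \bS_{\lambda}(\bV)$ is a quotient of a finitely generated projective, so it is finitely generated and $\alpha(F(M \otimes \bS_{\lambda}(\bV))) < \infty$.

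I do not expect a real obstacle here; the only points needing a little care are the finiteness of the set of partitions occurring in a finite-degree representation (which is what turns the final supremum into a finite maximum) and the fact that tensoring a finitely generated $A$-module by a finite-length object of $\cV$ preserves finite generation, both of which are immediate from the structure of $\cV$ as a semisimple tensor category closed under tensor products.
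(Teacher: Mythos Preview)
Your proof is correct and follows essentially the same approach as the paper: decompose $V$ into finitely many isotypic pieces, distribute the tensor product and $F$ across the direct sum, apply the sum-sup condition, and observe that each $M \otimes \bS_{\lambda}(\bV)$ is finitely generated so the hypothesis applies. The only cosmetic difference is that the paper first replaces $\alpha$ by $\alpha \circ F$ (which still satisfies sum-sup since $F$ commutes with direct sums) to reduce to $F = \mathrm{id}$, whereas you carry $F$ through explicitly; you are also more explicit than the paper in justifying that $M \otimes \bS_{\lambda}(\bV)$ is finitely generated.
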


\begin{proof}
Replacing $\alpha$ with $\alpha \circ F$, we can assume $F$ is the identity. Suppose $V$ has degree $\le n$. Then $V=\bigoplus_{\vert \lambda \vert \le n} \bS_{\lambda}(\bV)^{\oplus m(\lambda)}$ for some (possibly infinite) cardinal $m(\lambda)$. Since $\alpha$ satisfies sum-sup, we thus see that $\alpha(M \otimes V)$ is bounded by the maximum of the $\alpha(M \otimes \bS_{\lambda}(\bV))$ for $\vert \lambda \vert \le n$. Each of these $\alpha$'s is finite by assumption, and there are only finitely many of them.
\end{proof}

\begin{remark}
The definitions and observations in this section are not specific to $A$-modules, and can be applied in much greater generality. We will also apply them to the invariant $\rho$ introduced in \S \ref{ss:grassreg}.
\end{remark}

\subsection{Some filtrations}

For a partition $\lambda=(\lambda_1, \lambda_2, \ldots)$, let $t_r(\lambda)=(\lambda_{r+1}, \lambda_{r+2}, \ldots)$ and put $\tau_r(\lambda) = \vert t_r(\lambda) \vert$. Thus $t_r(\lambda)$ discards the first $r$ rows of $\lambda$, and $\tau_r(\lambda)$ counts how many boxes $\lambda$ has below the $r$th row. For an object $V$ of $\cV$ we let $\tau_r(V)$ be the supremum of the $\tau_r(\lambda)$'s over those $\lambda$'s occurring in $V$, with the convention $\tau_r(V)=-\infty$ if $V=0$. We note that if $W \subset V$ then $\tau_r(V)$ is equal to the maximum of $\tau_r(W)$ and $\tau_r(V/W)$. Much of this paper is concerned with bounding $\tau_r(M)$, for $M$ an $A$-module.

Let $V$ be an object of $\cV$. We define $F^r_nV$ (resp.\ $\gr^r_n(V)$, $\gr^r_{\mu}(V)$) to be the sum of the $\lambda$-isotypic pieces of $V$ with $\tau_r(\lambda) \ge n$ (resp.\ $\tau_r(\lambda)=n$, $t_r(\lambda)=\mu$). We have a descending filtration on $V$:
\begin{displaymath}
\cdots \subset F^r_2V \subset F^r_1V \subset F^r_0V = V,
\end{displaymath}
and $\gr^r_n(V)$ is identified with the associated graded $F^r_nV/F^r_{n+1}V$. We further have a decomposition
\begin{equation} \label{eq:grdecomp}
\gr^r_n(V) = \bigoplus_{\vert \mu \vert=n} \gr^r_{\mu}(V)
\end{equation}
Since the constructions $F^r_nV$, $\gr^r_n(V)$, and $\gr^r_{\mu}(V)$ are all summands of $V$, they are exact functors of $V$. We note that $\tau_r(V)$ is the minimum $n$ such that $F^r_{n+1}(V)=0$ or $\gr^r_m(V)=0$ for all $m>n$.

Suppose now that $M$ is an $A$-module (now $A=\bA(E)$). Then $F^r_nM$ is an $A$-submodule of $M$ and so $\gr^r_n(M)$ is also an $A$-module. The Pieri rule shows that $\fa_r F^r_n M \subset F^r_{n+1} M$, and so $\gr^r_n(M)$ is annihilated by $\fa_r$. (We remark that $F^r_{\bullet} M$ is \emph{not} the $\fa_r$-adic filtration, in general.) The Pieri rule also shows that $\gr^r_{\mu}(M)$ is an $A$-submodule of $\gr^r_n(M)$, and so the decomposition in \eqref{eq:grdecomp} with $V=M$ is one of $A$-modules.

If $M$ is supported on $V(\fa_{r-1})$ then so is $F^r_nM$, since this is a submodule of $M$. It follows that the functors $F^r_n$, $\gr^r_n$, and $\gr^r_{\mu}$ on $\Mod_A$ descend to functors on $\Mod_{A,\ge r}$ which we denote with the same notation. For $M \in \Mod_{A,\ge r}$, we define $\sigma_r(M)$ to be the maximum $n$ such that $F^r_n M \ne 0$, or $\infty$ if no such $n$ exists; we also put $\sigma_r(M)=-\infty$ if $M=0$. For an $A$-module $M$ we put $\sigma_r(M)=\sigma_r(T_{\ge r}(M))$. Clearly, we have $\sigma_r(M) \le \tau_r(M)$.

\begin{proposition} \label{prop:invt-ann}
Let $M$ be an $A$-module.
\begin{enumerate}[\indent \rm (a)]
\item If $k=\tau_r(M)$ is finite then $M$ is annihilated by $\fa_r^{k+1}$.
\item If $k=\sigma_r(M)$ is finite then $\fa_r^{k+1} M$ is supported on $V(\fa_{r-1})$.
\item If $M$ is finitely generated and supported on $V(\fa_r)$ then $\tau_r(M)$ and $\sigma_r(M)$ are finite.
\end{enumerate}
\end{proposition}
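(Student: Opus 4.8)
The three parts are linked by the Pieri-rule observations made just before the statement, namely that $\fa_r F^r_n M \subset F^r_{n+1} M$ and that $\gr^r_n(M)$ is annihilated by $\fa_r$. For part (a), the plan is to argue by induction on $k = \tau_r(M)$. If $k = -\infty$ then $M = 0$ and there is nothing to prove. If $k$ is finite, then $F^r_{k+1}M = 0$ by the characterization of $\tau_r$, so the filtration $M = F^r_0 M \supset F^r_1 M \supset \cdots \supset F^r_{k+1}M = 0$ has exactly $k+1$ steps, each graded piece $\gr^r_n(M)$ killed by $\fa_r$. Since $\fa_r$ maps $F^r_n M$ into $F^r_{n+1}M$, applying $\fa_r$ a total of $k+1$ times walks all the way down the filtration to $0$; hence $\fa_r^{k+1} M = 0$. (Equivalently: induct, using that $\tau_r(\fa_r M) \le \tau_r(M) - 1$ once $\fa_r M \subsetneq M$, which follows from $\fa_r M \subset F^r_1 M$ together with $\tau_r$ restricting well to submodules.)

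Part (b) is the same argument carried out on $T_{\ge r}(M)$ instead of $M$. By definition $\sigma_r(M) = \sigma_r(T_{\ge r}(M))$, so if $k = \sigma_r(M)$ is finite then $F^r_{k+1} T_{\ge r}(M) = 0$. The filtration $T_{\ge r}(M) = F^r_0 \supset \cdots \supset F^r_{k+1} = 0$ now has $k+1$ steps, and as before $\fa_r^{k+1}$ annihilates $T_{\ge r}(M)$ in $\Mod_{A, \ge r}$. Unwinding the localization: $T_{\ge r}$ is exact and $\fa_r$ acts invertibly on $\Mod_{A,\ge r}$-objects that are "generic enough"—more carefully, $T_{\ge r}(\fa_r^{k+1} M) = \fa_r^{k+1} T_{\ge r}(M) = 0$, which by definition of the Serre quotient $\Mod_{A, \ge r} = \Mod_A / \Mod_{A, < r}$ means exactly that $\fa_r^{k+1}M \in \Mod_{A, <r}$, i.e. $\fa_r^{k+1}M$ is supported on $V(\fa_{r-1})$. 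This is the cleanest route and avoids any explicit manipulation of sections.

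For part (c), the plan is: since $M$ is finitely generated and supported on $V(\fa_r)$, it is a noetherian module over $A$, and in fact supported on $V(\fa_r)$ means $T_{\ge r+1}(M) = 0$ so it suffices to bound $\tau_r$ (as $\sigma_r(M) \le \tau_r(M)$ always, the bound on $\tau_r$ gives both). One approach: being finitely generated, $M$ is a quotient of a finite sum of free modules $A \otimes \bS_{\lambda_i}(\bV)$ with $\tau_r(\bS_{\lambda_i}(\bV))$ finite; but $\tau_r$ of a free module need not be finite, so this alone does not work, and this is where the real content sits. Instead I would use that $M$ supported on $V(\fa_r)$ is annihilated by some power $\fa_r^N$ (by noetherianity, applied to the ascending chain of annihilators), and then filter $M$ by $\fa_r^j M$: each graded piece $\fa_r^j M / \fa_r^{j+1} M$ is a finitely generated module annihilated by $\fa_r$, hence lives in $\Mod_{A,r}[\fa_r] \simeq \Mod_B^{\gen}$ for $B = \bA(\cQ)$ on $Y = \Gr_r(E)$ (or in a lower stratum). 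A finitely generated object of $\Mod_B^{\gen}$ has finite length, hence finite degree, hence finite $\tau_r$; and since $\tau_r(V)$ of an extension is the max over the sub and quotient, finitely many such pieces give $\tau_r(M) < \infty$.

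The main obstacle is part (c): specifically, translating "finitely generated and annihilated by $\fa_r$" into a finite bound on $\tau_r$. This requires invoking the structural equivalence $\Mod_{A,r}[\fa_r] \simeq \Mod_B^{\gen}$ from \cite[Theorem~6.5]{symu1} and the fact that finitely generated objects there have finite length (equivalently finite degree). One subtlety to check is that passing to the quotients $\fa_r^j M / \fa_r^{j+1} M$ keeps things finitely generated (true by noetherianity) and that their $\tau_r$-values, being controlled stratum-by-stratum, assemble to a finite bound on $\tau_r(M)$ via the submodule/quotient compatibility of $\tau_r$. Parts (a) and (b) are short formal consequences of the Pieri-rule filtration and should present no difficulty.
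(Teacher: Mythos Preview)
Your arguments for (a) and (b) are exactly the paper's: iterate $\fa_r F^r_n M \subset F^r_{n+1}M$ down the filtration, and for (b) do this after applying $T_{\ge r}$ and read off support from $T_{\ge r}(\fa_r^{k+1}M)=0$.

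For (c) your reduction step agrees with the paper: filter by powers of $\fa_r$ and reduce to the case where $M$ is finitely generated and annihilated by $\fa_r$. After that, however, your proposed route through the equivalence $\Mod_{A,r}[\fa_r]\simeq\Mod_B^{\gen}$ does not close. The equivalence is with a subcategory of the \emph{quotient} $\Mod_{A,r}$, so it only sees $T_{\ge r}(M)$; finite length there controls $\sigma_r$, not $\tau_r$, and your parenthetical ``or in a lower stratum'' is exactly the part you would still have to bound. Moreover, ``finite length $\Rightarrow$ finite degree'' is not correct for objects of $\Mod_B^{\gen}$: the simple objects $\cF\otimes\bS_\lambda(\cK)$ are not of finite degree in the sense used for $\cV$. (There is also a mild circularity risk, since the identification of $\gr^r_\mu$ with $\sh^r_\mu\otimes\bS_\mu(\cK)$ is proved only after this proposition.)

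The paper instead finishes (c) with a one-line elementary observation you are missing: once $M$ is annihilated by $\fa_r$, it is a quotient of $A/\fa_r\otimes V$ with $V\in\cV$ of finite degree, and $\ell(A/\fa_r)=r$ (by the Cauchy decomposition $A/\fa_r=\bigoplus_{\ell(\lambda)\le r}\bS_\lambda(E)\otimes\bS_\lambda(\bV)$). Hence $\tau_r(A/\fa_r)=0$, and Littlewood--Richardson gives $\tau_r(A/\fa_r\otimes V)\le\max\{|\lambda|:\bS_\lambda\text{ occurs in }V\}<\infty$; since $\tau_r$ only decreases on quotients, $\tau_r(M)<\infty$. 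Replace your $\Mod_B^{\gen}$ detour with this, and (c) is complete.
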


\begin{proof}
(a) Since $\fa_r F^r_n M \subset F^r_{n+1} M$ and $F^r_{k+1}M=0$, it follows that $\fa_r^{k+1}M=0$.

(b) As in (a), we have $T_{\ge r}(\fa_r^{k+1} M)=0$, which implies the statement.

(c) Since $M$ is finitely generated and supported on $V(\fa_r)$, it has a finite length filtration where the graded pieces are annihilated by $\fa_r$, and it suffices to show that $\tau_r$ of each graded piece is finite. We may thus assume $M$ is annihilated by $\fa_r$. Thus $M$ is a quotient of $A/\fa_r \otimes V$ for some $V \in \cV^{\fgen}$. But $\ell(A/\fa_r)=r$, and so $\tau_r(A/\fa_r \otimes V)$ is equal to the maximum size of a partition occurring in $V$, which is finite. It follows that $\tau_r(M)$ is finite as well.
\end{proof}

For a complex $M$ in $\cV$, we define $\tau_r(M)$ to be the supremum of $\tau_r(\rH^i(M))$ over $i \in \bZ$, and similarly for $\sigma_r(M)$. In particular, this definition applies to objects of the derived category of $A$-modules.

\subsection{Construction of sheaves}

Let $Y=\Gr_r(E)$ with tautological bundle $\cQ$ and structure map $\pi \colon Y \to \Spec(\bC)$. Let $B=\bA(\cQ)$ and let $\cK$ be the kernel of the canonical map $\bV \otimes B \to \cQ^* \otimes B$ in $\Mod_B^{\gen}$. Let
\begin{displaymath}
\Psi \colon \Mod_{A,r}[\fa_r] \to \Mod_B^{\gen}
\end{displaymath}
be the equivalence constructed in \cite[\S 6.2]{symu1}

Suppose that $M \in \Mod_{A,r}$ is finitely generated. Then the $\fa_r$-adic filtration on $M$ is finite. The graded pieces belong to $\Mod_{A,r}[\fa_r]$ and so, by the results of \cite{symu1}, can be  filtered such that the graded pieces correspond under $\Psi$ to objects of the form $\cF \otimes \bS_{\lambda}(\cK)$, where $\cF$ is an $\cO_Y$-module. In this section, we explain how to construct the sheaves $\cF$ directly from $M$.

For a partition $\mu$ and an object $V$ of $\cV$, put
\begin{displaymath}
\bM^r_{\mu}(V) = \bigoplus_{k \ge \mu_1} V_{[k^r,\mu]}.
\end{displaymath}
The coordinate ring of $\Gr_r(E)$ under the Pl\"ucker embedding is
\begin{displaymath}
\bM^r_{\emptyset}(A) = \bigoplus_{k \ge 0} \bS_{(k^r)}(E).
\end{displaymath}
If $M$ is an $A$-module, we have a map on isotypic components
\begin{displaymath}
(\bS_{(n^r)}(\bV) \otimes \bS_{(n^r)}(E)) \otimes (\bS_{[m^r,\mu]}(\bV) \otimes M_{[m^r,\mu]})
\to \bS_{[(n+m)^r,\mu]}(\bV) \otimes M_{[(n+m)^r,\mu]}.
\end{displaymath}
By taking the $\bS_{[(n+m)^r, \mu]}(\bV)$-isotypic component, this induces a map
\begin{displaymath}
\bS_{(n^r)}(E) \otimes M_{[m^r,\mu]} \to M_{[(n+m)^r,\mu]},
\end{displaymath}
and one verifies that this gives $\bM^r_{\mu}(M)$ the structure of an $\bM^r_{\emptyset}(A)$-module.

\begin{definition}
We let $\sh^r_{\mu}(M)$ be the sheaf on $\Gr_r(E)$ obtained from $\bM^r_{\mu}(M)$ by applying the tilde construction.
\end{definition}

Since $\bM^r_{\mu}(V)$ is an exact functor of $V$ and the tilde construction is exact, it follows that $\sh^r_{\mu}(M)$ is an exact functor of $M$. If $M$ is finitely generated and supported on $V(\fa_{r-1})$ then $\tau_{r-1}(M)<\infty$, and so $\bM^r_{\mu}(M)_n=0$ for $n \gg 0$, which implies that $\sh^r_{\mu}(M)=0$. Since $\sh^r_{\mu}(M)$ commutes with direct limits, it follows that $\sh^r_{\mu}$ kills all modules supported on $V(\fa_{r-1})$. We thus see that $\sh^r_{\mu}$ factors through $\Mod_{A,\ge r}$. We use the same notation for the induced functor on the quotient.

\begin{proposition} \label{prop:gr}
Let $M \in \Mod_{A,r}[\fa_r]$ correspond to $\cF \otimes \bS_{\mu}(\cK)$ in $\Mod_B^{\gen}$ under $\Psi$. Then
\begin{displaymath}
\gr^r_{\lambda}(M)=\begin{cases} M & \text{if $\lambda=\mu$} \\
0 & \text{if $\lambda \ne \mu$} \end{cases}, \qquad
\sh^r_{\lambda}(M)=\begin{cases} \cF & \text{if $\lambda=\mu$} \\
0 & \text{if $\lambda \ne \mu$} \end{cases}.
\end{displaymath}
\end{proposition}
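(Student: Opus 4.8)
The plan is to trace through the construction of the equivalence $\Psi$ in \cite[\S 6.2]{symu1} and match two decompositions that it intertwines: on the $A$-side, the splitting of an object of $\Mod_{A,\ge r}$ into its $\gr^r_{\nu}$-pieces (indexed by the possible values $\nu$ of $t_r$), and on the $B$-side, the splitting of an object of $\Mod_B^{\gen}$ into ``$\bS_{\nu}(\cK)$-isotypic'' pieces. In these terms the proposition is the conjunction of two assertions: first, that $\Psi$ carries the $\bS_{\nu}(\cK)$-part to the $\gr^r_{\nu}$-part, so that a module corresponding to $\cF \otimes \bS_{\mu}(\cK)$ is concentrated entirely in $\gr^r_{\mu}$; and second, that $\sh^r_{\mu}$ recovers from that $\gr^r_{\mu}$-part precisely the multiplicity sheaf $\cF$. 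As the statement already concerns a single building block, there is no reduction to perform; it is harmless to treat $\cF = \cO_Y$ first and reinstate a general $\cF$ at the end by $\cO_Y$-linearity, but this is not essential.

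First I would prove the $\gr^r_{\lambda}$ statement, which reduces to the single claim that every partition $\lambda$ occurring in $M := \Psi^{-1}(\cF \otimes \bS_{\mu}(\cK))$ satisfies $t_r(\lambda) = \mu$. Granting this, the asserted formula is immediate: the $\gr^r_{\lambda}(M)$ are complementary summands of $M$, so $\gr^r_{\mu}(M) = M$ and $\gr^r_{\lambda}(M) = 0$ for $\lambda \ne \mu$. To prove the claim, I would start from the defining sequence $0 \to \cK \to \bV \otimes B \to \cQ^* \otimes B \to 0$ in $\Mod_B^{\gen}$, which exhibits $\cK$ as having generic corank $r$, the ``missing'' $r$ directions being supplied by $\cQ$. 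Unwinding $\Psi^{-1}$ via the Pl\"ucker coordinate ring, and using the computation of the (derived) section functor on the simple objects $\bS_{\nu}(\cK)$ in \cite[Corollaries~5.17 and~5.20]{symu1} (which is available relative to $Y$), one sees that the $\GL_{\infty}$-isotypic types appearing in $M$ are exactly the partitions whose rows past the $r$th coincide with $\mu$; the first $r$ rows are unconstrained because they record the free $\cQ$-directions, equivalently twisting by powers of $\cO_Y(1)$ along the Pl\"ucker embedding of $\Gr_r(E)$.

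For the $\sh^r_{\lambda}$ statement I would split into cases. If $\lambda \ne \mu$ then $\bM^r_{\lambda}(M) = \bigoplus_{k \ge \lambda_1} M_{[k^r,\lambda]}$ involves only the $\GL_{\infty}$-isotypic components indexed by the partitions $[k^r,\lambda]$, and each of these has $t_r([k^r,\lambda]) = \lambda \ne \mu$, hence vanishes by the previous step; thus $\bM^r_{\lambda}(M) = 0$ and $\sh^r_{\lambda}(M) = 0$. If $\lambda = \mu$, one must identify $\widetilde{\bM^r_{\mu}(M)}$ with $\cF$, and this is exactly where the construction of $\Psi$ is used: that construction proceeds through the Pl\"ucker coordinate ring $\bM^r_{\emptyset}(A) = \bigoplus_k \bS_{(k^r)}(E)$ of $\Gr_r(E)$, with $\bM^r_{\emptyset}(-)$ recording the ``$\bS_{\emptyset}(\cK)$-part'' and, more generally, $\bM^r_{\mu}(-)$ the ``$\bS_{\mu}(\cK)$-part''. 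Making this precise --- i.e.\ checking that, after applying the tilde construction, $\bM^r_{\mu} \circ \Psi^{-1}$ is the functor sending an object of $\Mod_B^{\gen}$ to the multiplicity sheaf of $\bS_{\mu}(\cK)$ in it --- and then evaluating at $\cF \otimes \bS_{\mu}(\cK)$, where $\bS_{\mu}(\cK)$ is relatively simple, gives $\widetilde{\bM^r_{\mu}(M)} = \cF$.

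The main obstacle is this last identification: one has to pin down the dictionary between the abstract equivalence $\Psi$ of \cite{symu1} and the concrete Pl\"ucker/tilde construction with enough care to see that no residual twist by a power of $\cO_Y(1)$ survives, so that one obtains $\cF$ on the nose rather than $\cF$ only up to the usual saturation ambiguity on $\Gr_r(E)$. Once that dictionary is established, the rest is bookkeeping: the $\gr^r_{\lambda}$ formula follows from the statement about row lengths in $\bS_{\mu}(\cK)$, and the $\sh^r_{\lambda}$ formula follows by combining the vanishing argument with the multiplicity-sheaf interpretation of $\bM^r_{\mu}$.
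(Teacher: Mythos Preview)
Your approach is essentially the paper's: both compute $S_{\ge r}(M)=\pi_*S'(\cF\otimes\bS_{\mu}(\cK))$ via \cite[Corollary~5.20, Proposition~6.8]{symu1}, read off from the explicit decomposition that only partitions of the form $[\lambda,\mu]$ occur (giving the $\gr^r_{\lambda}$ statement), and then apply the tilde construction. The one place you diverge in presentation is the final identification $\sh^r_{\mu}(M)=\cF$, which you frame as needing a ``dictionary'' between $\Psi$ and the Pl\"ucker picture and flag as the main obstacle; the paper bypasses this entirely by noting that the same formula for $S_{\ge r}(M)$ immediately gives $\bM^r_{\mu}(S_{\ge r}(M))=\bigoplus_{k\ge\mu_1}\rH^0(Y,\cF(k))$, which is a tail of the total sections module of $\cF$ and hence tildes to $\cF$ on the nose---so your worry about a residual twist by $\cO_Y(1)$ is resolved by a direct computation rather than by chasing $\Psi$.
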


\begin{proof}
\addtocounter{equation}{-1}
\begin{subequations}
Let $S' \colon \Mod_B^{\gen} \to \Mod_B$ be the section functor. By \cite[Corollary~5.20]{symu1}, we have
\begin{displaymath}
S'(\cF \otimes \bS_{\mu}(\cK)) = \cF \otimes \bigoplus_{\substack{\lambda\\ \lambda_d \ge \mu_1}} \bS_{[\lambda,\mu]}(\bV) \otimes \bS_{\lambda}(\cQ),
\end{displaymath}
(note that $\cF$ pulls out of $S'$). Applying $\pi_*$ to this, and using the identification $S_{\ge r}=\pi_* \circ S'$ \cite[Proposition~6.8]{symu1}, we find
\begin{equation} \label{eq:satid}
S_{\ge r}(M) = \bigoplus_{\substack{\lambda\\ \lambda_d \ge \mu_1}} \bS_{[\lambda,\mu]}(\bV) \otimes \rH^0(Y, \cF \otimes \bS_{\lambda}(\cQ)).
\end{equation}
Thus $\gr^r_{\lambda}(S_{\ge r}(M))=0$ if $\lambda \ne \mu$. By definition, $\gr^r_{\lambda}(M)$ is the image of $\gr^r_{\lambda}(S_{\ge r}(M))$ in $\Mod_{A,r}$, and so we see that $\gr^r_{\lambda}(M)=0$ if $\lambda \ne \mu$. We thus see $F^r_nM=M$ for $n \le \vert \mu \vert$ and $F^r_nM=0$ for $n>\vert \mu \vert$. It follows that $\gr^r_n(M)=M$, and so, appealing to \eqref{eq:grdecomp}, we find $\gr^r_{\mu}(M)=M$, since all the other pieces in the sum vanish.

From \eqref{eq:satid}, it follows that $\bM^r_{\lambda}(S_{\ge r}(M))=0$ if $\lambda \ne \mu$. Thus $\sh^r_{\lambda}(M)=0$ if $\lambda \ne \mu$. From \eqref{eq:satid}, we have
\begin{displaymath}
\bM^r_{\mu}(S_{\ge r}(M)) = \bigoplus_{k \ge \mu_1} \rH^0(Y, \cF(k)),
\end{displaymath}
where $\cF(k)$ denotes the $k$th twist of $\cF$ by $\cO_Y(1)=\bS_{1^r}(\cQ)=\det(\cQ)$. Applying the tilde construction to this gives $\cF$, and so $\sh^r_{\mu}(M)=\cF$.
\end{subequations}
\end{proof}

\begin{corollary} \label{cor:sh-K}
Let $M$ be an object of $\Mod_{A,r}$. Then $\gr^r_{\mu}(M) \in \Mod_{A,r}[\fa_r]$ corresponds to $\sh^r_{\mu}(M) \otimes \bS_{\mu}(\cK) \in \Mod_B^{\gen}$ under $\Psi$.
\end{corollary}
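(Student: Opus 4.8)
The plan is to reduce the statement to Proposition~\ref{prop:gr} by a filtration argument. Call an object $N$ of $\cV$ (or of $\Mod_{A,r}$) \emph{$\mu$-homogeneous} if $t_r(\lambda)=\mu$ for every partition $\lambda$ occurring in it; equivalently, $N=\gr^r_\mu(N)$. This property is evidently inherited by sub- and quotient objects, since the underlying polynomial representation of a subquotient is a subquotient of the underlying representation. For any $M\in\Mod_{A,r}$ the module $\gr^r_\mu(M)$ is $\mu$-homogeneous, is annihilated by $\fa_r$ by the Pieri rule, and is supported on $V(\fa_r)$, so it lies in $\Mod_{A,r}[\fa_r]$ as claimed. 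Moreover $\bM^r_\mu(M)$ is built only from the isotypic components of $M$ indexed by the partitions $[k^r,\mu]$, and each such component of $M$ coincides with the corresponding component of $\gr^r_\mu(M)$; hence $\bM^r_\mu(M)=\bM^r_\mu(\gr^r_\mu(M))$ as $\bM^r_\emptyset(A)$-modules, and therefore $\sh^r_\mu(M)=\sh^r_\mu(\gr^r_\mu(M))$. Consequently it suffices to prove: if $N\in\Mod_{A,r}[\fa_r]$ is $\mu$-homogeneous, then $\Psi(N)\cong\sh^r_\mu(N)\otimes\bS_\mu(\cK)$.

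To prove this I would first invoke the structure theory recalled from \cite[\S 6.2]{symu1}: the object $\Psi(N)\in\Mod_B^{\gen}$ admits a filtration whose graded pieces have the form $\cF_i\otimes\bS_{\lambda_i}(\cK)$ with each $\cF_i$ an $\cO_Y$-module. Pulling this filtration back through the equivalence $\Psi$ gives a filtration of $N$ whose graded pieces are the modules $\Psi^{-1}(\cF_i\otimes\bS_{\lambda_i}(\cK))$. By Proposition~\ref{prop:gr}, the $i$th such piece is $\lambda_i$-homogeneous; on the other hand it is a subquotient of $N$, hence $\mu$-homogeneous. A nonzero module cannot be both $\lambda_i$-homogeneous and $\mu$-homogeneous unless $\lambda_i=\mu$, so $\lambda_i=\mu$ whenever $\cF_i\neq 0$. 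Thus $\Psi(N)$ is filtered by objects of the form $\cF_i\otimes\bS_\mu(\cK)$.

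The one non-formal ingredient is then the fact that the functor $\cF\mapsto\cF\otimes\bS_\mu(\cK)$, $\Mod_{\cO_Y}\to\Mod_B^{\gen}$, is a fully faithful exact embedding whose essential image is closed under extensions (and the relevant filtered colimits); this should follow from the explicit description of $\Mod_B^{\gen}$ in \cite[\S 5]{symu1}, under which the various $\bS_\nu(\cK)$ behave like a family of orthogonal simple objects over the base $Y$, so that an object whose isotypic content lies entirely in the $\bS_\mu(\cK)$-part is again of the form $\cF\otimes\bS_\mu(\cK)$. Granting this, $\Psi(N)=\cF\otimes\bS_\mu(\cK)$ for a single $\cO_Y$-module $\cF$, and then Proposition~\ref{prop:gr}, applied to $N=\Psi^{-1}(\cF\otimes\bS_\mu(\cK))$, yields $\cF=\sh^r_\mu(N)$, which is exactly what we wanted. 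I expect that this last structural statement about $\Mod_B^{\gen}$ is the only place where input beyond Proposition~\ref{prop:gr} and the exactness of $\gr^r_\mu$ and $\sh^r_\mu$ is needed; everything else is bookkeeping. (If one prefers to avoid the possibly infinite filtrations above, one can first reduce to finitely generated $M$ --- using that $\gr^r_\mu$, $\sh^r_\mu$, $\Psi$ and $-\otimes\bS_\mu(\cK)$ all commute with filtered colimits and that $\Mod_{A,r}$ is locally noetherian --- and then use finiteness of the $\fa_r$-adic filtration; the $\mu$-homogeneity reformulation makes this unnecessary.)
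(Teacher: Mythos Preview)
Your argument is correct and follows the same outline as the paper's proof: reduce to the case $M=\gr^r_\mu(M)$, then use the structure of $\Mod_B^{\gen}$ together with Proposition~\ref{prop:gr} to identify $\Psi(M)$ as $\cF\otimes\bS_\mu(\cK)$ with $\cF=\sh^r_\mu(M)$. The paper's two-line proof is terser only because it implicitly uses the \emph{direct sum} decomposition $\Psi(M)\cong\bigoplus_\lambda \cF_\lambda\otimes\bS_\lambda(\cK)$ coming from the equivalence $\Mod_B^{\gen}\simeq\Mod_B^0$ (objects of $\Mod_B^0$ over $Y$ split as $\bigoplus_\lambda \cF_\lambda\otimes\bS_\lambda(\bV)$ by semisimplicity of $\cV$), so that Proposition~\ref{prop:gr} kills all summands with $\lambda\neq\mu$ at once; your filtration-plus-extension-closure step is the same fact phrased more cautiously.
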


\begin{proof}
Replacing $M$ with $\gr^r_{\mu}(M)$, we may as well assume $M=\gr^r_{\mu}(M)$. By the proposition, we see that $M=\gr^r_\mu(M)$ only happens if $\Psi(M)$ has the form $\cF \otimes \bS_{\mu}(\cK)$ for some $\cF$. Furthermore, the proposition implies $\cF=\sh^r_{\mu}(M)$, which completes the proof.
\end{proof}

\subsection{The $\rho$ invariant} \label{ss:grassreg}

As in the previous section, let $Y=\Gr_r(E)$, let $\pi \colon Y \to \Spec(\bC)$ be the structure map, and let $\cQ$ be the tautological quotient bundle. Given an $\cO_Y$-module $\cF$, we define $\rho(\cF)$ to be the minimal non-negative integer $\rho$ such that $\rH^i(Y, \cF \otimes \bS_{\lambda}(\cQ))=0$ whenever $i>0$ and $\lambda_r \ge \rho$. Note that 
\[
\bS_{(\lambda_1+1, \dots, \lambda_r+1)}(\cQ) \cong \cO(1) \otimes \bS_\lambda(\cQ)
\]
where $\cO(1)$ is the ample line bundle corresponding to the Pl\"ucker embedding. Thus $\rho$ is related to the notion of Castelnuovo--Mumford regularity, though it is not the same. We note that $\rho$ satisfies the sum-sup condition, since cohomology on $Y$ commutes with filtered direct limits \cite[Tag 07TB]{stacks}.

\begin{proposition} \label{prop:coh}
Let $X$ be a projective variety of dimension $n$, let $\cM$ be an $\cO_X$-module, and let
\begin{displaymath}
\cF_{n-1} \to \cF_{n-2} \to \cdots \to \cF_0 \to \cM \to 0,
\end{displaymath}
be a partial resolution. Suppose that each $\cF_i$ has no higher cohomology. Then $\cM$ has no higher cohomology.
\end{proposition}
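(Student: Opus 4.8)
The plan is to use the standard \emph{dimension shifting} argument: break the partial resolution into short exact sequences, chase the resulting long exact cohomology sequences to shift cohomological degree, and finish with Grothendieck's vanishing theorem.

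First I would introduce the syzygy sheaves. Set $\mathcal{Z}_{-1}=\cM$, and for $0 \le i \le n-1$ let $\mathcal{Z}_i = \ker(\cF_i \to \cF_{i-1})$, where we read $\cF_{-1}=\cM$ and the map $\cF_0 \to \cF_{-1}$ as the given surjection $\cF_0 \to \cM$. Exactness of the given complex means precisely that the image of $\cF_i \to \cF_{i-1}$ equals $\mathcal{Z}_{i-1}$, so for each $0 \le i \le n-1$ we obtain a short exact sequence of $\cO_X$-modules
\[
0 \to \mathcal{Z}_i \to \cF_i \to \mathcal{Z}_{i-1} \to 0
\]
(one checks routinely that these are genuine quasi-coherent sheaves, being kernels of maps of quasi-coherent sheaves).

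Next, for each such sequence the long exact cohomology sequence reads
\[
\cdots \to \rH^p(X,\cF_i) \to \rH^p(X,\mathcal{Z}_{i-1}) \to \rH^{p+1}(X,\mathcal{Z}_i) \to \rH^{p+1}(X,\cF_i) \to \cdots,
\]
and since $\cF_i$ has no higher cohomology the flanking terms vanish for $p \ge 1$, giving isomorphisms $\rH^p(X,\mathcal{Z}_{i-1}) \cong \rH^{p+1}(X,\mathcal{Z}_i)$ for all $p \ge 1$. Iterating these for $i=0,1,\dots,n-1$, for any fixed $q \ge 1$ we get
\[
\rH^q(X,\cM) = \rH^q(X,\mathcal{Z}_{-1}) \cong \rH^{q+1}(X,\mathcal{Z}_0) \cong \cdots \cong \rH^{q+n}(X,\mathcal{Z}_{n-1}).
\]
Finally, since $X$ has dimension $n$, Grothendieck's vanishing theorem gives $\rH^j(X,-)=0$ for all $j>n$; as $q+n>n$, the right-hand side vanishes, and hence $\rH^q(X,\cM)=0$ for every $q \ge 1$, which is the claim.

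I do not expect a genuine obstacle here — this is a routine argument — but the one point worth flagging is the interplay between the \emph{length} of the partial resolution and the dimension of $X$: the hypothesis supplies exactly $n$ terms $\cF_{n-1},\dots,\cF_0$, which is precisely what the $n$-fold degree shift consumes before Grothendieck vanishing applies; a resolution of length $n-1$ would not suffice in general, so the bound in the statement is sharp in that sense.
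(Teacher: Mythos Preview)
Your proof is correct and is essentially the same as the paper's. The paper phrases it as a descending induction, defining $\cG_i=\coker(\cF_{i+1}\to\cF_i)$ (your $\mathcal{Z}_{i-1}$, by exactness) and proving $\rH^j(X,\cG_i)=0$ for $j>i$, with base case handled by the right-exactness of $\rH^n$; but this is just your dimension-shifting chain of isomorphisms reorganized, and both arguments rest on Grothendieck vanishing.
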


\begin{proof}
Let $\cG_i$ be the cokernel of $\cF_{i+1} \to \cF_i$ (with the convention $\cF_n=0$). We prove that $\rH^j(X, \cG_i) = 0$ for $j > i$ by descending induction on $i$. If $i=n-1$, then we have a surjection $\cF_{n-1} \to \cG_{n-1} \to 0$ and $\rH^n(X, -)$ is right-exact (since $n=\dim{X}$), so $\rH^n(X, \cG_{n-1}) = 0$. 

In general, we have the short exact sequence $0 \to \cG_{i+1} \to \cF_i \to \cG_i \to 0$ which gives
\begin{displaymath}
\rH^j(Y, \cF_i) \to \rH^j(Y, \cG_i) \to \rH^{j+1}(Y, \cG_{i+1}).
\end{displaymath}
If $j > i$, then the rightmost term vanishes by induction, and the leftmost term vanishes by assumption, so we conclude that $\rH^j(X, \cG_i) = 0$. Since $\cM=\cG_0$, we see that $\cM$ has no higher cohomology.
\end{proof}

\begin{proposition} \label{prop:regres}
Let $n=r(d-r)$ be the dimension of $Y$. Given a partial resolution
\begin{displaymath}
\cF_{n-1} \to \cF_{n-2} \to \cdots \to \cF_0 \to \cM \to 0,
\end{displaymath}
we have $\rho(\cM) \le \max \rho(\cF_i)$. In particular, if each $\cF_i$ is $\rho$-finite then so is $\cM$.
\end{proposition}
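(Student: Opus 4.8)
The plan is to reduce Proposition~\ref{prop:regres} to Proposition~\ref{prop:coh} by twisting. Fix a partition $\lambda$ with $\lambda_r \ge \max_i \rho(\cF_i)$; I must show $\rH^j(Y, \cM \otimes \bS_\lambda(\cQ)) = 0$ for all $j > 0$. Since tensoring by the locally free sheaf $\bS_\lambda(\cQ)$ is exact, the given partial resolution stays exact after applying $- \otimes \bS_\lambda(\cQ)$, giving a partial resolution
\begin{displaymath}
\cF_{n-1} \otimes \bS_\lambda(\cQ) \to \cdots \to \cF_0 \otimes \bS_\lambda(\cQ) \to \cM \otimes \bS_\lambda(\cQ) \to 0
\end{displaymath}
of length $n = \dim Y$. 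The definition of $\rho$ gives $\rH^i(Y, \cF_k \otimes \bS_\lambda(\cQ)) = 0$ for all $i > 0$ and all $k$, because $\lambda_r \ge \rho(\cF_k)$. So each term $\cF_k \otimes \bS_\lambda(\cQ)$ has no higher cohomology, and Proposition~\ref{prop:coh} (applied with $X = Y$, which has dimension $n = r(d-r)$) yields $\rH^j(Y, \cM \otimes \bS_\lambda(\cQ)) = 0$ for $j > 0$. Since $\lambda$ was arbitrary subject to $\lambda_r \ge \max_i \rho(\cF_i)$, this shows $\rho(\cM) \le \max_i \rho(\cF_i)$.

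The only subtlety to check is the bookkeeping on the length of the resolution: Proposition~\ref{prop:coh} requires a partial resolution indexed $\cF_{n-1} \to \cdots \to \cF_0$ with exactly $n = \dim X$ terms, and the hypothesis of Proposition~\ref{prop:regres} provides precisely this (with the same $n = r(d-r) = \dim Y$), so the indices match with no shift. I should also note that $\bS_\lambda(\cQ)$ is locally free of finite rank on $Y$, hence flat, which is what makes $- \otimes \bS_\lambda(\cQ)$ exact and preserves exactness of the resolution; this is routine. The final sentence of the proposition — that $\rho$-finiteness of the $\cF_i$ implies $\rho$-finiteness of $\cM$ — is then immediate, since a maximum of finitely many finite quantities is finite.

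I do not anticipate a genuine obstacle here; the proposition is essentially a corollary of Proposition~\ref{prop:coh} obtained by running that result in each Schur twist $\bS_\lambda(\cQ)$ simultaneously. If anything, the one point worth being careful about is that $\rho(\cF_i)$ could a priori be $+\infty$ for some $i$, but in that case $\max_i \rho(\cF_i) = \infty$ and the asserted inequality $\rho(\cM) \le \max_i \rho(\cF_i)$ holds vacuously, so no separate argument is needed.
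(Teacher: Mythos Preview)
Your proof is correct and follows exactly the same approach as the paper's own proof: twist the partial resolution by $\bS_{\lambda}(\cQ)$ for $\lambda_r \ge \max_i \rho(\cF_i)$, observe each term then has vanishing higher cohomology, and invoke Proposition~\ref{prop:coh}. The extra remarks you include (flatness of $\bS_{\lambda}(\cQ)$, the vacuous case $\max_i \rho(\cF_i)=\infty$) are sound but not essential.
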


\begin{proof}
Let $\rho=\max \rho(\cF_i)$ and let $\lambda$ be a partition with $\lambda_r \ge \rho$. Then, by assumption $\cF_i \otimes \bS_{\lambda}(\cQ)$ has no higher cohomology, for all $i$. Thus, by Proposition~\ref{prop:coh}, we see that $\cM \otimes \bS_{\lambda}(\cQ)$ has no higher cohomology, and so $\rho(\cM) \le \rho$.
\end{proof}

\section{Regularity bounds} \label{s:regbd}

\subsection{Regularity of complexes}

Given $M \in \rD^b(A)$ and $n \in \bZ$, we define $\reg_n(M)$ to be the infimum of non-negative integer $\rho$ such that $\rH^{-i}(M \otimes^{\rL}_A \bC)$ is supported in degrees $\le i+\rho$ for all $i \le n$, including negative values of $i$. We define $\reg(M)$, the {\bf regularity} of $M$, to be the supremum of the $\reg_n(M)$ over all $n$. The regularity does not provide any lower bound on the degrees in $\rH^{-i}(M \otimes^{\rL}_A \bC)$. However, for most complexes we care about, there is a trivial lower bound. Indeed, let $\rD^{\ge 0, b}(A)$ be the subcategory of the derived category $\rD(A)$ on those complexes $M$ such that $\rH^n(M)=0$ for $n<0$ and $n \gg 0$.  Then for $M \in \rD^{\ge 0,b}(A)$, the module $\rH^{-i}(M \otimes^{\rL}_A \bC)$ is always supported in degrees $\ge i$.

\begin{proposition}
Let $M \in \rD^b(A)$. Let $k$ be maximal so that $\rH^k(M) \ne 0$. Then $\reg_n(M) \ge k$ for all $n \ge 0$.
\end{proposition}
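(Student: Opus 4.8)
The plan is to show that the top cohomology group $\rH^k(M \otimes^{\rL}_A \bC)$ is nonzero and concentrated in non-negative internal degrees, and then observe that this is incompatible with the support bound in the definition of $\reg_n$ as soon as $\rho < k$. (The case $k \le 0$ needs no argument, since $\reg_n(M)$ is by definition a non-negative integer or $\infty$; so I assume $k \ge 1$.)

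The main step is to identify $\rH^k(M \otimes^{\rL}_A \bC)$. I would apply $- \otimes^{\rL}_A \bC$ to the canonical-truncation triangle $N \to M \to \rH^k(M)[-k] \to N[1]$, where $N = \tau^{\le k-1}M \in \rD^{\le k-1}(A)$, and read off the long exact cohomology sequence in degrees $k$ and $k+1$. Since $- \otimes^{\rL}_A \bC$ is the left derived functor of a right exact functor, it is right $t$-exact, so $N \otimes^{\rL}_A \bC$ lies in $\rD^{\le k-1}(A)$ and has no cohomology in degrees $\ge k$; on the other hand $\rH^k(\rH^k(M)[-k] \otimes^{\rL}_A \bC) = \rH^k(M) \otimes_A \bC$ and $\rH^{k+1}(\rH^k(M)[-k] \otimes^{\rL}_A \bC) = 0$. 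The long exact sequence then yields a canonical isomorphism $\rH^k(M \otimes^{\rL}_A \bC) \cong \rH^k(M) \otimes_A \bC$. (The same conclusion drops out of the hyper-Tor spectral sequence for $M \otimes^{\rL}_A \bC$, where $\rH^k(M) \otimes_A \bC$ is the unique term contributing to $\rH^k$ and admits no differentials in or out.) I expect this identification to be the only genuinely non-formal point; everything else is bookkeeping.

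Next I would check that $\rH^k(M) \otimes_A \bC$ is nonzero and supported in internal degrees $\ge 0$ — this is graded Nakayama. Every object of $\cV$, hence every $A$-module, is non-negatively graded and bounded below, because $\bS_{\lambda}(\bV)$ sits in internal degree $\vert \lambda \vert \ge 0$ and the multiplication $A_n \otimes \bS_\lambda(\bV) \to N$ lands in degree $n + \vert \lambda \vert$. So if $N$ is a nonzero $A$-module with bottom degree $d_0 \ge 0$, then the augmentation ideal $A_+ = \bigoplus_{n \ge 1} A_n$ has $(A_+ N)_{d_0} = 0$, whence $N \otimes_A \bC = N/A_+ N$ is nonzero and supported in degrees $\ge d_0 \ge 0$. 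Applying this with $N = \rH^k(M)$, which is nonzero by the maximality of $k$, finishes the nonvanishing.

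Finally, suppose $\rho \ge 0$ satisfies the defining condition of $\reg_n$, i.e.\ $\rH^{-i}(M \otimes^{\rL}_A \bC)$ is supported in degrees $\le i + \rho$ for all $i \le n$. Since $k \ge 1$ I may take $i = -k$, which is $\le n$ because $-k \le -1 < 0 \le n$; the condition then says $\rH^k(M \otimes^{\rL}_A \bC)$ is supported in degrees $\le \rho - k$. By the previous step this group is nonzero and supported in degrees $\ge 0$, so $\rho - k \ge 0$. Thus every admissible $\rho$ satisfies $\rho \ge k$, and taking the infimum gives $\reg_n(M) \ge k$ (the bound holds vacuously, as $\infty \ge k$, if there is no admissible $\rho$).
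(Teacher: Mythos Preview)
Your proof is correct and follows essentially the same route as the paper: identify $\rH^k(M \otimes^{\rL}_A \bC) \cong \rH^k(M) \otimes_A \bC$, use graded Nakayama to see this is nonzero and sits in non-negative internal degree, and deduce $\rho \ge k$ from the defining inequality at $i = -k$. Your treatment is simply more explicit (the truncation triangle for the identification, the separate handling of $k \le 0$, and the verification that $-k \le n$), but there is no substantive difference in strategy.
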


\begin{proof}
We have $\rH^k(M \otimes^{\rL}_A \bC)=\rH^k(M) \otimes_A \bC$. Since $\rH^k(M)$ is non-zero, so is $\rH^k(M) \otimes_A \bC$ by Nakayama's lemma. Suppose $\reg_n(M)=\rho$. Since $\rH^k(M \otimes^{\rL}_A \bC)$ is supported in some non-negative degree, we must have $0 \le -k+\rho$, so $\rho \ge k$.
\end{proof}

Thus any bound on regularity also bounds the number of non-zero cohomology groups. The following proposition is a useful way to reduce statements about complexes to modules.

\begin{proposition} \label{prop:concentrate}
Let $M \in \rD^{\ge 0, b}(A)$. Suppose $\reg_{-1}(M)$ is finite. Then there is a triangle
\begin{displaymath}
F \to M \to N \to
\end{displaymath}
where $F$ is a complex of free modules such that $F^n=0$ for $n \le 0$ and $F^n=0$ for $n \gg 0$ and each $F^i$ has finite degree, and $N$ is an $A$-module, regarded as a complex in degree~$0$.
\end{proposition}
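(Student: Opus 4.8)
The plan is to split off a minimal free resolution of $M$ in the positive cohomological degrees. Let $k$ be maximal with $\rH^k(M) \ne 0$; if $k \le 0$ then $M$ is already quasi-isomorphic to the module $\rH^0(M)$ and we may take $F = 0$, so assume $k \ge 1$. Since $A$ is a connected graded tca (that is, $A_0 = \bC$ and $A = \bigoplus_{i \ge 0} A_i$), graded Nakayama holds and the standard theory of minimal free resolutions applies: $M$ is represented in $\rD^b(A)$ by a minimal complex of projectives $P^{\bullet}$, bounded above, with $P^n \cong A \otimes \rH^n(M \otimes^{\rL}_A \bC)$ for every $n$. Since $M$ has no cohomology above degree $k$, we have $\rH^n(M \otimes^{\rL}_A \bC) = 0$ for $n > k$, while $\rH^k(M \otimes^{\rL}_A \bC) = \rH^k(M) \otimes_A \bC \ne 0$ by Nakayama (as in the preceding proof); hence $P^n = 0$ for $n > k$.

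The hypothesis enters next. By definition $\reg_{-1}(M) < \infty$ means exactly that $\rH^n(M \otimes^{\rL}_A \bC)$ has finite degree for every $n \ge 1$, whence $P^n = A \otimes \rH^n(M \otimes^{\rL}_A \bC)$ has finite degree for $1 \le n \le k$. Now let $F$ be the brutal truncation of $P^{\bullet}$ in cohomological degrees $\ge 1$: so $F^n = P^n$ for $n \ge 1$, $F^n = 0$ for $n \le 0$, with differentials inherited from $P^{\bullet}$. Then $F$ is a subcomplex of $P^{\bullet}$, it is a complex of free modules with $F^n = 0$ unless $1 \le n \le k$, and each $F^n$ has finite degree. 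The quotient complex $P^{\bullet}/F = (\cdots \to P^{-1} \to P^0 \to 0 \to \cdots)$ is concentrated in degrees $\le 0$; its cohomology in negative degrees equals that of $P^{\bullet}$, hence equals $\rH^{\bullet}(M)$, which vanishes because $M \in \rD^{\ge 0, b}(A)$, and its cohomology in positive degrees vanishes trivially. So $P^{\bullet}/F$ is quasi-isomorphic to the module $N$, the cokernel of $P^{-1} \to P^0$, placed in degree $0$. Finally, the short exact sequence of complexes $0 \to F \to P^{\bullet} \to P^{\bullet}/F \to 0$ produces a triangle $F \to P^{\bullet} \to N \to F[1]$, and composing with $P^{\bullet} \cong M$ in $\rD(A)$ gives the triangle asserted in the proposition.

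This is a soft statement, so there is no serious obstacle; the two places to be careful are (i) the finite-degree property of the terms $F^n$, which is precisely the point at which $\reg_{-1}(M) < \infty$ is used and without which the construction breaks, and (ii) the claim that $P^{\bullet}/F$ is concentrated in degree $0$, which uses both the lower and the upper boundedness packaged into $M \in \rD^{\ge 0, b}(A)$. If one prefers to avoid invoking minimal resolutions of complexes as a black box, the same $F$ can be built by descending induction on $k$: map $A \otimes \rH^k(M \otimes^{\rL}_A \bC)$, placed in degree $k$ and of finite degree by hypothesis, onto the top cohomology of $M$; check from the long exact sequences that the cone again lies in $\rD^{\ge 0, b}(A)$, has strictly smaller top degree, and still has finite $\reg_{-1}$; and splice the resulting triangles using the octahedral axiom.
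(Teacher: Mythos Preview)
Your proof is correct and follows essentially the same approach as the paper: represent $M$ by a complex of free modules, take the brutal truncation in positive cohomological degrees as $F$, and identify the quotient with a single module in degree~$0$. The only cosmetic difference is that you start directly with a minimal free complex (so $P^n \cong A \otimes \rH^n(M \otimes^{\rL}_A \bC)$ is immediate), whereas the paper begins with an arbitrary free representative, truncates, and then passes to the minimal summand of the positive part; both arguments use $\reg_{-1}(M)<\infty$ in the same way to bound the degrees of the $F^n$ and conclude $F^n=0$ for $n\gg 0$.
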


\begin{proof}
Let $\cdots \to M^{-1} \to M^0 \to M^1 \to \cdots$ be a complex consisting of free modules which represents $M$. We take $N = \cdots \to M^{-2} \to M^{-1}$; by the condition that $M \in \rD^{\ge 0,b}(A)$, $N$ can be represented by $\coker(M^{-2} \to M^{-1})$, a module concentrated in degree $0$. Now consider the subcomplex $M^0 \to M^1 \to \cdots$. This is a direct sum of a minimal complex (one in which the differentials become $0$ upon tensoring with $\bC$) and trivial complexes (two term complexes consisting of an isomorphism between free modules). So $M$ is equivalent to a minimal complex that we call $F$. 

By our assumptions, $\rH^n(M) = 0$ for $n \ge D$ for some bound $D$, and all cohomology groups are concentrated in finitely many degrees. Let $d$ be the maximal degree appearing; the subcomplex of $F$ consisting of free modules generated in degree $>d$ is equivalent to $0$ by Nakayama's lemma, and hence by our minimality assumption, it is zero, so $F$ is concentrated in degrees $\le d$. In particular, the maximal degree of a generator of $F^n$ becomes a strictly decreasing function when $n \ge D$, so we see that $F^n = 0$ for $n \gg 0$.
\end{proof}

\begin{remark}
In the setting of the above proposition, it is easy to translate bounds on regularity between $N$ and $M$. For instance, if $\reg_n(M)$ is finite then $\reg_n(N)$ is finite, and if $\reg(N)$ is finite then $\reg(M)$ is finite.
\end{remark}

\begin{proposition} \label{prop:taud}
Let $M \in \rD^{\ge 0, b}(A)$. Suppose that $\reg_0(M)$ is finite. Then $\tau_d(M)$ is finite.
\end{proposition}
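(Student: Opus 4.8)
The plan is to use Proposition~\ref{prop:concentrate} to reduce the statement for the complex $M$ to the analogous statement for an honest $A$-module, and then to bound $\tau_d$ of a module generated in bounded degree by a direct Littlewood--Richardson estimate. The key elementary input is the inequality $\tau_d(A \otimes W) \le \deg(W)$ for any $W \in \cV$ of finite degree: since $\ell(A) = d$ (as $A = \Sym(\bV \otimes E)$ with $\dim E = d$), no partition occurring in $A$ has a box below its $d$th row, so if $\bS_\lambda(\bV)$ occurs in $\bS_\nu(\bV) \otimes \bS_\mu(\bV)$ with $\nu$ occurring in $A$ and $\mu$ occurring in $W$, then $\nu \subseteq \lambda$ and $|\lambda/\nu| = |\mu|$, whence $\tau_d(\lambda) = |\lambda| - \sum_{i=1}^d \lambda_i \le |\lambda| - \sum_{i=1}^d \nu_i = |\lambda| - |\nu| = |\mu| \le \deg(W)$. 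This is the same mechanism used in the proof of Proposition~\ref{prop:invt-ann}(c).

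For the argument itself: since $\reg_0(M)$ is finite, so is $\reg_{-1}(M)$, and Proposition~\ref{prop:concentrate} produces a triangle $F \to M \to N \to$ in which $F$ is a bounded complex of free $A$-modules, each of finite degree, concentrated in positive cohomological degrees, and $N$ is an $A$-module in degree~$0$. From the long exact cohomology sequence, each $\rH^i(M)$ is an extension of a submodule of $\rH^i(N)$ by a quotient of $\rH^i(F)$; since $\tau_d$ of a module equals the maximum of its values on the two steps of such a filtration (as recalled in \S\ref{s:prelim}), taking the supremum over $i$ gives $\tau_d(M) \le \max(\tau_d(F), \tau_d(N))$. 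For the first term, each $F^i$ has the form $A \otimes W_i$ with $W_i \in \cV$ of finite degree, so $\tau_d(\rH^i(F)) \le \tau_d(F^i) \le \deg(W_i) < \infty$ by the inequality above, and then $\tau_d(F) = \sup_i \tau_d(\rH^i(F)) < \infty$ because $F$ is bounded. For the second term, the remark following Proposition~\ref{prop:concentrate} gives that $\reg_0(N)$ is finite; since $N$ is a module in degree~$0$ this says precisely that $N \otimes_A \bC$ is supported in degrees $\le \rho$ for some $\rho$, i.e.\ that $N$ is generated in degrees $\le \rho$, so $N$ is a quotient of $A \otimes V$ for some $V \in \cV$ of degree $\le \rho$, and $\tau_d(N) \le \tau_d(A \otimes V) \le \rho < \infty$ by the inequality above. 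This completes the argument.

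I do not anticipate a real obstacle: the sole subtlety is that a free representative of $M$ may be unbounded below, and this is exactly what Proposition~\ref{prop:concentrate} is designed to circumvent; after that, the estimate $\tau_d(A \otimes W) \le \deg(W)$ --- which quantifies the intuition that tensoring the rank-$d$-bounded algebra $A$ with a representation of degree $\le \rho$ can push at most $\rho$ boxes below the $d$th row --- does all the work. Note that neither $M$ nor $N$ need be finitely generated, but the generating representation $V$ has finite degree regardless of its multiplicities, so finite generation is never used, consistent with the hypotheses of Theorem~\ref{thm:main}.
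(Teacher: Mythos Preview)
Your proof is correct and follows essentially the same approach as the paper: reduce to the module case via Proposition~\ref{prop:concentrate}, then use $\ell(A)=d$ together with the Littlewood--Richardson/Pieri estimate $\tau_d(A\otimes W)\le \deg(W)$ to bound $\tau_d$ of a module generated in bounded degree. The only differences are expository---you spell out the Littlewood--Richardson inequality and the long exact sequence argument for $\tau_d(M)\le\max(\tau_d(F),\tau_d(N))$ a bit more explicitly than the paper does.
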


\begin{proof}
First suppose that $M$ is a module. Since $\reg_0(M)$ is finite, it follows that $M$ is a quotient of $V \otimes A$ for some finite degree $V \in \cV$. Since $A = \Sym(\bV)^{\otimes d}$, we have $\ell(A) = d$ by the Pieri rule. Another application of the Pieri rule implies that $\tau_d(\bS_\lambda(\bV) \otimes A)=\vert \lambda \vert$. In particular, $\tau_d(V \otimes A)$ is bounded by the degree of $V$, and thus finite. It follows that $\tau_d(M)$ is finite as well.

We now treat the general case. Let $F \to M \to N \to$ be a triangle as in Proposition~\ref{prop:concentrate}, so that $F$ is a finite length complex of finite degree free modules and $N$ is a module. Since $\reg(F)$ is finite and $\reg_0(M)$ is finite, it follows that $\reg_0(N)$ is finite. Thus, by the previous paragraph, $\tau_d(N)$ is finite. Since $\tau_d(F)$ is finite, it follows that $\tau_d(M)$ is finite.
\end{proof}

\subsection{Regularity and change of rings}

For an $A/\fa_r^k$-module $M$, we write $\reg_n(M; A/\fa_r^k)$ for the $\reg_n$ of $M$ over $A/\fa_r^k$; this is defined just like $\reg_n(M)$, but using Tor's over $A/\fa_r^k$ instead of over $A$. We now examine how this quantity relates to $\reg_n(M)$.

\begin{proposition} \label{prop:cmpreg}
Let $A=\bA(E)$ and suppose $M$ is an $A$-module annihilated by $\fa_r^k$. Then $\reg_n(M; A/\fa_r^k)$ is finite if and only if $\reg_n(M;A)$ is finite.
\end{proposition}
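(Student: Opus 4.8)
The plan is to compare $\operatorname{Tor}$ over $A$ with $\operatorname{Tor}$ over $\bar{A} := A/\fa_r^k$ via the Cartan--Eilenberg change-of-rings spectral sequence. Write $\mathfrak{m}$ for the augmentation ideal of $A$, so that $\bC = A/\mathfrak{m}$. Since $\bar{A}$ is a finitely generated $A$-module, it has finite regularity over $A$ by \cite{symu1}; call this number $\rho_0$. Equivalently, the representation $\operatorname{Tor}^A_q(\bar{A}, \bC)$ is supported in degrees $\le q + \rho_0$ for every $q$. As $\mathfrak{m}$ acts as zero on $\bC$, it also acts as zero on each $\operatorname{Tor}^A_q(\bar{A}, \bC)$, so the latter is merely a polynomial $\GL_\infty$-representation; since $\cV$ is semisimple it is a direct sum of copies of the $\bS_\lambda(\bV)$ with $\vert \lambda \vert \le q + \rho_0$. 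For any $\bar{A}$-module $P$ and any partition $\lambda$ one has $\operatorname{Tor}^{\bar{A}}_p(P, \bS_\lambda(\bV)) = \bS_\lambda(\bV) \otimes_{\bC} \operatorname{Tor}^{\bar{A}}_p(P, \bC)$ (resolve $\bC$ over $\bar{A}$ by free modules and tensor the resolution with the flat $\bC$-module $\bS_\lambda(\bV)$), and by the Littlewood--Richardson rule this is supported in degrees $\le \vert \lambda \vert + D$ whenever $\operatorname{Tor}^{\bar{A}}_p(P, \bC)$ is supported in degrees $\le D$. Consequently, if $\operatorname{Tor}^{\bar{A}}_p(M, \bC)$ is supported in degrees $\le D_p$, then $\operatorname{Tor}^{\bar{A}}_p(M, \operatorname{Tor}^A_q(\bar{A}, \bC))$ is supported in degrees $\le D_p + q + \rho_0$.

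The spectral sequence, arising from the associativity isomorphism $M \otimes^{\rL}_A \bC = M \otimes^{\rL}_{\bar{A}} (\bar{A} \otimes^{\rL}_A \bC)$, reads
\begin{displaymath}
E^2_{p,q} = \operatorname{Tor}^{\bar{A}}_p(M, \operatorname{Tor}^A_q(\bar{A}, \bC)) \;\Longrightarrow\; \operatorname{Tor}^A_{p+q}(M, \bC).
\end{displaymath}
It lies in the first quadrant, so it converges and has only finitely many nonzero terms of each total degree. Note that $E^2_{p,0} = \operatorname{Tor}^{\bar{A}}_p(M, \bC)$ and that $\operatorname{Tor}^{\bar{A}}_0(M, \bC) = M/\mathfrak{m}M = \operatorname{Tor}^A_0(M, \bC)$. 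Suppose first that $\rho := \reg_n(M; A/\fa_r^k)$ is finite, so $\operatorname{Tor}^{\bar{A}}_p(M, \bC)$ is supported in degrees $\le p + \rho$ for $p \le n$. Then by the previous paragraph $E^2_{p,q}$ is supported in degrees $\le (p+q) + \rho + \rho_0$ whenever $p \le n$. For $m \le n$, every graded piece $E^\infty_{p,q}$ with $p+q=m$ of $\operatorname{Tor}^A_m(M, \bC)$ is a subquotient of $E^2_{p,q}$ (and here $p \le m \le n$), hence supported in degrees $\le m + \rho + \rho_0$; since taking subquotients and extensions is harmless for supports in $\cV$, $\operatorname{Tor}^A_m(M, \bC)$ is supported in degrees $\le m + \rho + \rho_0$, and therefore $\reg_n(M; A) \le \rho + \rho_0 < \infty$.

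Conversely, suppose $\rho := \reg_n(M; A)$ is finite, so $\operatorname{Tor}^A_m(M, \bC)$ is supported in degrees $\le m + \rho$ for $m \le n$. I claim, by induction on $p \in \{0, \dots, n\}$, that $\operatorname{Tor}^{\bar{A}}_p(M, \bC)$ is supported in bounded degrees. For $p = 0$ this equals $\operatorname{Tor}^A_0(M, \bC)$, supported in degrees $\le \rho$. For the inductive step we examine the $q = 0$ edge of the spectral sequence: differentials \emph{into} $E^r_{p,0}$ have source with negative second index and so vanish, whence $E^{r+1}_{p,0} = \ker(d^r \colon E^r_{p,0} \to E^r_{p-r,\,r-1})$. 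Thus $\operatorname{Tor}^{\bar{A}}_p(M, \bC) = E^2_{p,0}$ carries a finite filtration whose top quotient is $E^\infty_{p,0}$ --- itself a quotient of $\operatorname{Tor}^A_p(M, \bC)$, so supported in degrees $\le p + \rho$ --- and whose remaining graded pieces are subobjects of $E^r_{p-r,\,r-1}$ for $2 \le r \le p$, hence subquotients of $E^2_{p-r,\,r-1} = \operatorname{Tor}^{\bar{A}}_{p-r}(M, \operatorname{Tor}^A_{r-1}(\bar{A}, \bC))$. Since $p - r < p \le n$, the inductive hypothesis bounds the degrees of $\operatorname{Tor}^{\bar{A}}_{p-r}(M, \bC)$, and then the first paragraph bounds those of $\operatorname{Tor}^{\bar{A}}_{p-r}(M, \operatorname{Tor}^A_{r-1}(\bar{A}, \bC))$. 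Hence $\operatorname{Tor}^{\bar{A}}_p(M, \bC)$ is supported in bounded degrees, and applying this for $p \le n$ gives $\reg_n(M; A/\fa_r^k) < \infty$.

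The step I expect to demand the most care is the converse: one must check that along the $q = 0$ edge the relevant differentials connect only to columns with strictly smaller homological index $p$ (so that the induction closes), and that replacing the coefficient module $\bC$ by the finite-degree representation $\operatorname{Tor}^A_{r-1}(\bar{A}, \bC)$ costs only a bounded shift in degrees --- which is precisely where finiteness of $\reg(\bar{A}; A) = \rho_0$ (via \cite{symu1}) and the semisimplicity of $\cV$ enter. All bounds produced are effective, so with more bookkeeping one could extract explicit functions relating $\reg_n(M; A)$ and $\reg_n(M; A/\fa_r^k)$.
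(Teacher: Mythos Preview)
Your proof is correct, but it takes a genuinely different route from the paper's. The paper argues by a direct induction on $n$: it picks a short exact sequence $0 \to N \to V \otimes A/\fa_r^k \to M \to 0$ with $V$ of finite degree, uses the long exact sequence of $\Tor^A(-,\bC)$ together with finiteness of $\reg_n(A/\fa_r^k;A)$ to reduce the $n$th Tor of $M$ to the $(n-1)$st Tor of $N$, and invokes the inductive hypothesis on $N$ (noting that over $A/\fa_r^k$ the module $V \otimes A/\fa_r^k$ is free, so the $\bar A$-Tors of $M$ and $N$ are shifted by one). Your argument instead runs the Cartan--Eilenberg change-of-rings spectral sequence $E^2_{p,q}=\Tor^{\bar A}_p(M,\Tor^A_q(\bar A,\bC))\Rightarrow\Tor^A_{p+q}(M,\bC)$ and analyzes the $q=0$ edge.

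Both approaches hinge on the same ingredient, namely that $\bar A=A/\fa_r^k$ has finite regularity over $A$; you invoke it as a single global constant $\rho_0$, while the paper uses it termwise via Proposition~\ref{prop:sumsupfin}. Your method yields the clean explicit bound $\reg_n(M;A)\le\reg_n(M;\bar A)+\rho_0$ in the forward direction, and packages the converse into a standard edge argument; the paper's dimension-shifting proof is more elementary (no spectral sequences) and perhaps closer in spirit to the rest of \S\ref{s:regbd}, where similar short-exact-sequence reductions recur. One small terminological slip: in your converse, $E^\infty_{p,0}$ is the \emph{smallest} (bottom) piece of the filtration on $E^2_{p,0}$, not the ``top quotient''; but since you only use that it is a subquotient of $\Tor^A_p(M,\bC)$, the argument is unaffected.
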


\begin{proof}
We prove this by induction on $n$. When $n=0$, we have $\reg_0(M; A/\fa_r^k) = \reg_0(M;A)$ since this just measures the maximum degree of a minimal generator, which is unaffected by which algebra we consider $M$ to be a module over.

Now suppose that $\reg_0(M;A)=\reg_0(M;A/\fa_r^k)$ is finite; if not, then $\reg_n$ is infinite for both algebras and there is nothing to show. Then we have a short exact sequence
\[
0 \to N \to V \otimes A/\fa_r^k \to M \to 0
\]
for some finite degree $\GL_\infty$ representation $V$, and hence an exact sequence
\[
\Tor_n^A(V \otimes A/\fa_r^k, \bC) \to \Tor_n^A(M, \bC) \to \Tor_{n-1}^A(N, \bC) \to \Tor_{n-1}^A(V \otimes A/\fa_r^k, \bC).
\]
By noetherianity of $A$, $\reg_n(A/\fa_r^k;A)$ is finite for all $n$, and hence the same is true if we tensor by $V$. In particular, $\Tor_n^A(M, C)$ is concentrated in finitely many degrees if and only if the same is true for $\Tor_{n-1}^A(N,\bC)$. By induction, we know that $\reg_{n-1}(N;A)$ is finite if and only if $\reg_{n-1}(N;A/\fa_r^k)$ is finite, so we conclude that $\reg_n(M;A)$ is finite if and only if $\reg_n(M;A/\fa_r^k)$ is finite.
\end{proof}

\subsection{Bounding $\rho$ from regularity}

For $M \in \Mod_{A,r}$, define $\rho_r(M)$ to be the supremum of the $\rho(\sh^r_{\lambda}(M))$, over all $\lambda$. For an $A$-module $M$ supported on $V(\fa_r)$, define $\rho_r(M)$ to be $\rho_r(T_{\ge r}(M))$.

\begin{proposition} \label{prop:regsh}
Let $M$ be an $A$-module annihilated by $\fa_r^k$ for some $k$, and let $n=r(d-r)$ be the dimension of $\Gr_r(E)$. Suppose $\reg_{n-1}(M)$ is finite. Then $\rho_r(M)$ is also finite.
\end{proposition}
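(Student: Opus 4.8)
The plan is to reduce to the situation of a module annihilated by $\fa_r$ (not just a power), where the structure theory of $\Mod_{A,r}[\fa_r] \simeq \Mod_B^{\gen}$ and the sheaves $\sh^r_\mu$ become directly accessible, and then to feed a free resolution over $A/\fa_r^k$ into Proposition~\ref{prop:regres} to control $\rho$ of the relevant sheaves.

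First I would pass from $A$ to $A/\fa_r^k$: by Proposition~\ref{prop:cmpreg}, since $M$ is annihilated by $\fa_r^k$, the finiteness of $\reg_{n-1}(M;A)$ is equivalent to the finiteness of $\reg_{n-1}(M;A/\fa_r^k)$, so we may work over $A/\fa_r^k$. Next, I would use the $\fa_r$-adic filtration $M \supset \fa_r M \supset \cdots \supset \fa_r^k M = 0$. The graded pieces $\fa_r^j M/\fa_r^{j+1} M$ are annihilated by $\fa_r$, hence lie in $\Mod_{A,r}[\fa_r]$ once we pass to $\Mod_{A,\ge r}$; and since $\sh^r_\mu$ and hence $\rho_r$ are exact functors (they are built from exact functors of $V$ and the tilde construction), it suffices to bound $\rho_r$ of each graded piece. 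Here I would need to check that $\reg_{n-1}$ of each graded piece stays finite — this should follow because the filtration has only $k$ steps and each inclusion $\fa_r^{j+1}M \hookrightarrow \fa_r^j M$ is an inclusion of $A/\fa_r^k$-modules, so a long exact sequence in Tor together with a downward induction (analogous to the argument in Proposition~\ref{prop:cmpreg}) propagates finiteness of $\reg_{n-1}$, possibly after also invoking $\reg_0$-finiteness to know the $\fa_r^j M$ are finitely generated as $\GL_\infty$-objects in the appropriate sense. So we are reduced to the case where $M$ itself is annihilated by $\fa_r$.

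Now assume $M \in \Mod_{A,r}[\fa_r]$, so under $\Psi$ it corresponds to some $N \in \Mod_B^{\gen}$, and $N$ (being finitely generated, which follows from $\reg_0(M) < \infty$) admits a finite filtration whose graded pieces have the form $\cF \otimes \bS_\mu(\cK)$. By Corollary~\ref{cor:sh-K}, $\sh^r_\mu(M) = \cF$ for the piece indexed by $\mu$ (and $\sh^r_\lambda$ of the others vanish by Proposition~\ref{prop:gr}), so bounding $\rho_r(M)$ amounts to bounding $\rho(\cF)$ for each sheaf $\cF$ arising this way. The key link is that a free resolution of $M$ over $A/\fa_r = A/\fa_r \simeq \bA(\cQ)\text{-type object}$ — more precisely, using $\reg_{n-1}(M) < \infty$, one produces a partial resolution $G_{n-1} \to \cdots \to G_0 \to M \to 0$ by free $A/\fa_r$-modules of bounded degree with $n-1 = r(d-r)-1$ terms, i.e. exactly $n$ terms $G_0,\dots,G_{n-1}$. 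Applying the exact functor $\sh^r_\mu$ (or more directly the sheafification $\widetilde{\bM^r_\mu(-)}$) yields a partial resolution $\sh^r_\mu(G_{n-1}) \to \cdots \to \sh^r_\mu(G_0) \to \cF \to 0$ of length $n$ on the $n$-dimensional variety $Y = \Gr_r(E)$. Each $\sh^r_\mu(G_i)$ is the sheafification of $\bM^r_\mu$ of a free module of bounded degree, which one computes explicitly to be (a direct sum of twists of) nice bundles on $Y$ — in particular $\rho$-finite, with $\rho$ bounded in terms of the degree bound. Then Proposition~\ref{prop:regres} gives $\rho(\cF) \le \max_i \rho(\sh^r_\mu(G_i)) < \infty$, and taking the sup over the finitely many $\mu$ occurring gives $\rho_r(M) < \infty$.

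The main obstacle I anticipate is the last technical point: identifying $\sh^r_\mu$ of a free $A/\fa_r$-module explicitly enough to see it is $\rho$-finite, and confirming it really is a \emph{resolution} of length at least $n = \dim Y$ (not merely $n-1$) after applying $\sh^r_\mu$ — i.e. that the $-1$ in $\reg_{n-1}$ is exactly calibrated so that $\Tor$'s in homological degrees $0$ through $n-1$, together with the cokernel, assemble into the $n$-term partial resolution that Proposition~\ref{prop:regres} demands. The bookkeeping between "$\reg_{n-1}$ finite" and "there is a free partial resolution with $n$ terms of bounded degree" needs the standard fact that finiteness of $\reg_{n-1}$ forces $t_0,\dots,t_{n-1}$ all finite, hence one can choose the first $n$ modules in a minimal resolution to have bounded degree; the exactness and direct-limit-compatibility of $\sh^r_\mu$ then does the rest. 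A secondary nuisance is making sure the reduction through the $\fa_r$-adic filtration preserves $\reg_{n-1}$-finiteness at each of the $k$ steps, but as noted this is the same Tor long-exact-sequence argument already used in Proposition~\ref{prop:cmpreg}.
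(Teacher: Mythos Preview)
Your proposal has the right skeleton---change rings via Proposition~\ref{prop:cmpreg}, build a partial free resolution, apply the exact functor $\sh^r_{\lambda}$, and invoke Proposition~\ref{prop:regres}---but you insert an unnecessary and problematic detour through the $\fa_r$-adic filtration.

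The detour is unnecessary because $\sh^r_{\lambda}$ is defined on all of $\Mod_A$, not just on $\Mod_{A,r}[\fa_r]$; you even note this yourself. So there is no need to reduce to modules annihilated by $\fa_r$: one simply takes the partial free resolution $A/\fa_r^k \otimes V_{\bullet} \to M$ over $A/\fa_r^k$ (which exists with $V_0,\dots,V_{n-1}$ of finite degree once $\reg_{n-1}(M;A/\fa_r^k)$ is finite), applies $\sh^r_{\lambda}$ directly, and feeds the resulting length-$n$ partial resolution of $\sh^r_{\lambda}(M)$ into Proposition~\ref{prop:regres}. That is the entire argument in the paper.

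The detour is also problematic: your claim that $\reg_{n-1}$ of each $\fa_r$-adic graded piece stays finite is not justified by ``the same Tor long-exact-sequence argument as Proposition~\ref{prop:cmpreg}.'' That proposition compares regularity of the \emph{same} module over two rings; here you would need to control $\reg_{n-1}$ of the \emph{submodules} $\fa_r^j M$, and passing to a submodule in a short exact sequence typically costs one unit of $\reg$. After $k$ steps you may have exhausted your budget. It might be salvageable, but it is real work, not a routine remark.

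Finally, the obstacle you flag at the end---computing $\sh^r_{\mu}$ of a free module explicitly to verify $\rho$-finiteness---is handled in the paper without any explicit computation, via Proposition~\ref{prop:sumsupfin}: since $A/\fa_r^k$ is finitely generated, $\sh^r_{\lambda}(A/\fa_r^k)$ is a coherent sheaf on $Y$ and hence $\rho$-finite, and then the sum-sup property of $\rho$ extends this to $\sh^r_{\lambda}(A/\fa_r^k \otimes V_i)$ for any finite-degree $V_i$.
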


\begin{proof}
By Proposition~\ref{prop:cmpreg} we have that $\reg_{n-1}(M; A/\fa_r^k)$ is finite. Let $A/\fa_r^k \otimes V_{\bullet} \to M$ be a partial resolution of length $n$ with each $V_i$ of finite degree. Since $\sh^r_{\lambda}$ is exact, we have a partial resolution $\sh^r_{\lambda}(A/\fa_r^k \otimes V_{\bullet}) \to \sh^r_{\lambda}(M)$. By  Proposition~\ref{prop:sumsupfin}, $\rho(\sh^r_{\lambda}(A/\fa_r^k \otimes V_i))$ is finite for each $i$. Thus $\rho(\sh^r_{\lambda}(M)$) is finite by Proposition~\ref{prop:regres}.
\end{proof}

\begin{corollary}
Let $M$ be an $A$-module. Suppose that $\tau_r(M)$ and $\reg_{n-1}(M)$ are finite, with $n=r(d-r)$. Then $\rho_r(M)$ is finite.
\end{corollary}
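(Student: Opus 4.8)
The plan is to deduce this immediately from Proposition~\ref{prop:regsh}, the only work being to produce the hypothesis that $M$ is annihilated by a power of $\fa_r$. Since $\tau_r(M)=k$ is assumed finite, Proposition~\ref{prop:invt-ann}(a) tells us that $M$ is annihilated by $\fa_r^{k+1}$; in particular $M$ is supported on $V(\fa_r)$, so $\rho_r(M)$ is defined.

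Having established that $M$ is $\fa_r^{k+1}$-torsion, I would apply Proposition~\ref{prop:regsh} with the exponent $k+1$: since $\reg_{n-1}(M)$ is finite by hypothesis and $n=r(d-r)$ is the dimension of $\Gr_r(E)$, that proposition gives directly that $\rho_r(M)$ is finite. This completes the argument.

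There is essentially no obstacle here: the corollary is just a repackaging of Proposition~\ref{prop:regsh} in which the explicit annihilation hypothesis is replaced by finiteness of the more intrinsic invariant $\tau_r$, the translation being furnished by Proposition~\ref{prop:invt-ann}(a). The only point worth a sentence is that one should first check $\rho_r(M)$ makes sense, i.e.\ that $M$ is supported on $V(\fa_r)$, which follows from the same annihilation statement.
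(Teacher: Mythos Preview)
Your proposal is correct and matches the paper's intended argument: the corollary is stated without proof in the paper precisely because it is an immediate consequence of Proposition~\ref{prop:regsh}, with Proposition~\ref{prop:invt-ann}(a) supplying the annihilation hypothesis from finiteness of $\tau_r(M)$.
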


\subsection{Bounds on $\tau$ of local cohomology}

The goal of this section is Proposition~\ref{prop:taubd}, which proves $\tau_r$-finiteness of $\rR \Gamma_{\le r}(M)$ under certain conditions.

\begin{proposition}
Let $M \in \Mod_{A,r}$. For all $n>0$, we have
\begin{displaymath}
\tau_{r-1}(\rR^n S_{\ge r}(M)) \le \sigma_r(M) + \max(\sigma_r(M), \rho_r(M)).
\end{displaymath}
\end{proposition}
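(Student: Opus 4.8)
The plan is to reduce to the case where $M$ is annihilated by $\fa_r$, then compute $\rR^n S_{\ge r}(M)$ using the dictionary with sheaf cohomology on the Grassmannian $Y = \Gr_r(E)$ established in \cite{symu1}. First I would use the finite $\fa_r$-adic filtration on $M$: since $\tau_r$ of a complex is the sup over cohomology, and each short exact sequence of the filtration gives long exact sequences in $\rR^\bullet S_{\ge r}$, it suffices to bound $\tau_{r-1}(\rR^n S_{\ge r}(\gr M))$ where $\gr M$ is a subquotient annihilated by $\fa_r$. One must check that $\sigma_r$ and $\rho_r$ of the graded pieces are controlled by those of $M$; for $\sigma_r$ this follows because $F^r_\bullet$ is a summand-wise construction interacting with the filtration (using that $\tau_r(V)$, hence $\sigma_r$, is the max over a sub and quotient), and for $\rho_r$ it follows since $\sh^r_\lambda$ is exact and $\rho$ satisfies sum-sup, so $\rho_r(\gr M) \le \rho_r(M)$. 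Actually the cleanest route may be to observe $\sigma_r(M) = \sigma_r(\gr M)$ by construction and just work directly, but some care with the filtration is needed.

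So assume $M \in \Mod_{A,r}[\fa_r]$, corresponding under $\Psi$ to some $N \in \Mod_B^{\gen}$ with $B = \bA(\cQ)$. By \cite[Proposition~6.8]{symu1} we have $\rR S_{\ge r}(M) = \rR\pi_* \rR S'(N)$, so there is a spectral sequence (or at least a bound) combining the derived section functor $\rR S'$ on $\Mod_B^{\gen}$ with sheaf cohomology $\rR\pi_*$ on $Y$. Next I would filter $N$ so that its graded pieces have the form $\cF \otimes \bS_\mu(\cK)$; by Corollary~\ref{cor:sh-K}, $\cF = \sh^r_\mu(M)$ and $\mu$ ranges over partitions with $\gr^r_\mu(M) \ne 0$, so $|\mu| \le \sigma_r(M)$ and $\rho(\cF) \le \rho_r(M)$. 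For such a graded piece, formula \eqref{eq:satid} in the proof of Proposition~\ref{prop:gr} gives $S_{\ge r}$ explicitly, and the higher $\rR^n S_{\ge r}$ is governed by $\rR^{>0}\pi_*(\cF \otimes \bS_\lambda(\cQ))$ together with the Borel--Weil--Bott computation of $\rR^{>0} S'$ on $\bS_\mu(\cK)$ from \cite[Corollary~5.20]{symu1}.

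The key numerical point is then to track which partitions $\lambda$ can occur in $\rR^n S_{\ge r}(\cF\otimes\bS_\mu(\cK))$ for $n>0$. Cohomology $\rR^{>0}\pi_*(\cF\otimes\bS_\lambda(\cQ))$ can only be nonzero when $\lambda_r < \rho(\cF) \le \rho_r(M)$, i.e.\ when the ``bottom row'' of $\lambda$ is bounded; the Borel--Weil--Bott part contributes partitions whose rows beyond the $d$-th are a permutation-type modification controlled by $\mu$, of total size $\le |\mu| \le \sigma_r(M)$. Putting these together, any $\lambda$ occurring in $\rH^{-i}$ of the relevant complex has $\tau_{r-1}(\lambda)$ — the number of boxes below row $r-1$ — bounded by the contribution from rows $r$ through $d$ (at most $(d-r+1)\cdot \max(\sigma_r(M),\rho_r(M))$, but pinned more tightly by the constraints) plus the $\le \sigma_r(M)$ boxes coming from $t_d(\lambda)$ via $\mu$. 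The hard part will be doing this bookkeeping carefully enough to land exactly on the stated bound $\sigma_r(M) + \max(\sigma_r(M), \rho_r(M))$ rather than something weaker: one has to use that the $\cQ$-weight $\lambda$ truly lives on $r$ rows on the Grassmannian side, that $\rR^{>0}\pi_*$ forces $\lambda_r$ below $\rho_r(M)$ so the $r$ rows of $\lambda$ contribute at most $\sigma_r(M)$ boxes below row $r-1$ from row $r$ itself combined with the shape constraint from $\mu$, and that the remaining rows contribute at most $\sigma_r(M)$. I would organize the estimate as: $\tau_{r-1}(\lambda) \le \lambda_r + \tau_r(\lambda)$, bound $\lambda_r$ by $\max(\sigma_r(M),\rho_r(M))$ using the vanishing of higher $\pi_*$-cohomology, and bound $\tau_r(\lambda)$ by $\sigma_r(M)$ using the Borel--Weil--Bott description together with $|\mu| \le \sigma_r(M)$.
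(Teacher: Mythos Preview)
Your approach is essentially the same as the paper's: reduce via filtration to pieces of the form $\cF \otimes \bS_\mu(\cK)$, then use the spectral sequence $\rR^i\pi_* \rR^j S' \Rightarrow \rR^{i+j} S_{\ge r}$ together with \cite[Corollary~5.20]{symu1} and the definition of $\rho$. Two small differences are worth noting. First, the paper filters directly by $F^r_\bullet$ (the $\gr^r_n$ filtration) rather than passing through the $\fa_r$-adic filtration; this is slightly cleaner since $\gr^r_\mu$ already lands in the subcategory corresponding to $\cF \otimes \bS_\mu(\cK)$, so no second filtration is needed. Second, the paper organizes the numerics by splitting into the cases $j>0$ (where $\mu_r < \lambda_1$ forces $\tau_{r-1}([\mu,\lambda]_j) \le |\lambda|+\lambda_1 \le 2|\lambda|$) and $j=0,\ i>0$ (where $\mu_r < \rho(\cF)$ gives $\tau_{r-1}([\mu,\lambda]) = \mu_r + |\lambda| \le \rho(\cF)+|\lambda|$), whereas you propose the decomposition $\tau_{r-1}(\nu)=\nu_r+\tau_r(\nu)$ with $\tau_r(\nu)\le|\lambda|$ and $\nu_r\le\max(|\lambda|,\rho(\cF))$. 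Your version is correct (indeed $\tau_r([\mu,\lambda]_j)\le|\lambda|$ follows because the top $r$ entries after the Bott sort have sum at least $\sum_i(\mu_i+N-i)$, and $\nu_r\le\lambda_1$ when $j>0$ since at most $r-1$ of the shifted entries exceed $\lambda_1+N-r$), and it yields exactly the same bound $\sigma_r+\max(\sigma_r,\rho_r)$.
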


\begin{proof}
First suppose that $M=\gr^r_{\lambda}(M)$ for some $\lambda$. Let $\cF=\sh^r_{\lambda}(M)$, so that under the equivalence $\Mod_{A,r}[\fa_r]=\Mod_B^{\gen}$ the object $M$ corresponds to $\cF \otimes \bS_{\lambda}(\cK)$ (Corollary~\ref{cor:sh-K}). Recall that $\rR S_{\ge r}$ on $\Mod_{A,r}[\fa_r]$ corresponds to $\rR \pi_* \circ \rR S'$ on $\Mod_B^{\gen}$, where $S' \colon \Mod_B^{\gen} \to \Mod_B$ is the section functor \cite[Proposition~6.8]{symu1}. By \cite[Corollary~5.20]{symu1} we have
\begin{displaymath}
\rR^j S'(\cF \otimes \bS_{\lambda}(\cK))=\cF \otimes \bigoplus_{\mu} \bS_{[\mu,\lambda]_j}(\bV) \otimes \bS_{\mu}(\cQ),
\end{displaymath}
where $\bS_{[\lambda,\mu]_j}(\bV)$ means $\bS_{\nu}(\bV)$ if Bott's algorithm terminates in $j$ steps to $\nu$, and means~0 otherwise. We thus obtain a spectral sequence
\begin{displaymath}
\bigoplus_{\mu} \rR^i \pi_*(\cF \otimes \bS_{\mu}(\cQ)) \otimes \bS_{[\mu,\lambda]_j}(\bV) \implies \rR^{i+j} S_{\ge r}(\cF \otimes \bS_{\lambda}(\cK)).
\end{displaymath}
First suppose $j>0$. Then $\bS_{[\mu,\lambda]_j}(\bV) = 0$ unless $\mu_r<\lambda_1$, and so
\begin{displaymath}
\tau_{r-1}([\mu,\lambda]_j) \le \vert \lambda \vert + \lambda_1 \le 2 \vert \lambda \vert.
\end{displaymath}
Now suppose $j=0$ and $i>0$. Then $\rR^i \pi_*(\cF \otimes \bS_{\mu}(\cQ))=0$ if $\mu_r \ge \rho(\cF)$. We thus only get contributions $\bS_{[\mu,\lambda]}(\bV)$ with $\mu_r<\rho(\cF)$. Thus $\tau_{r-1}([\mu,\lambda]) \le \rho(\cF)+\vert \lambda \vert$. Thus, counting the contributions from $j=0$ and $j>0$, we find
\begin{displaymath}
\tau_{r-1}(\rR^n S_{r-1}(M)) \le \max(2\vert \lambda \vert, \vert \lambda \vert + \rho(\cF)) = \vert \lambda \vert + \max(\vert \lambda \vert, \rho(\cF))
\end{displaymath}
which proves the proposition in this case. (Note $\sigma_r(M)=\vert \lambda \vert$.)

We now consider the general case. We have the short exact sequence
\begin{displaymath}
0 \to M' \to M \to \gr_n(M) \to 0,
\end{displaymath}
and thus an exact sequence
\begin{displaymath}
\rR^i S_{\ge r}(M') \to \rR^i S_{\ge r}(M) \to \rR^i S_{\ge r}(\gr_n(M)).
\end{displaymath}
Let $a=\sigma_r(M)$ and $b=\max_{\lambda} \rho(\sh^r_{\lambda}(M))$. Then $\sigma_r$ of $M'$ and $\gr_n(M)$ are both bounded by $a$, while the maximum regularity of $\sh^r_{\lambda}$ for $M'$ and $\gr_n(M)$ are bounded by $b$. Thus by induction, $\tau_{r-1}$ of the left term above is bounded by $a+\max(a,b)$. On the other hand, $\tau_{r-1}$ of the right term is bounded by $a+\max(a,b)$ by the previous paragraph. The result thus follows.
\end{proof}

\begin{corollary} \label{cor:taubd}
Let $M$ be an $A$-module. Suppose that $\tau_r(M)$ and $\rho_r(M)$ are finite. Then $\tau_{r-1}(\rR^i \Sigma_{\ge r}(M))$ is finite for all $i>0$.
\end{corollary}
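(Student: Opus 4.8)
The plan is to reduce the corollary to the preceding proposition, which already does all the real work; the only task here is to arrange the hypotheses so that that proposition applies, and this is essentially bookkeeping.

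First I would observe that the hypothesis $\tau_r(M)<\infty$ forces $M$ into $\Mod_{A,\le r}$: by Proposition~\ref{prop:invt-ann}(a), $M$ is annihilated by $\fa_r^{k+1}$ with $k=\tau_r(M)$, hence supported on $V(\fa_r)$. Consequently $N:=T_{\ge r}(M)$ lies in $\Mod_{A,r}$, the image of $\Mod_{A,\le r}$ in $\Mod_{A,\ge r}$. Since $T_{\ge r}$ is exact and carries injectives to injectives (a standard property of the Serre quotient by the localizing subcategory $\Mod_{A,\le r-1}$; see \cite{symu1}), and $\Sigma_{\ge r}=S_{\ge r}\circ T_{\ge r}$, we obtain $\rR^i\Sigma_{\ge r}(M)\cong\rR^i S_{\ge r}(N)$ for all $i$. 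Thus it suffices to bound $\tau_{r-1}(\rR^i S_{\ge r}(N))$ for $i>0$.

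Next I would transport the two finiteness hypotheses onto $N$. Directly from the definition of $\rho_r$ on modules supported on $V(\fa_r)$ we have $\rho_r(N)=\rho_r(M)<\infty$, and from $\sigma_r(M)=\sigma_r(T_{\ge r}(M))$ together with the inequality $\sigma_r\le\tau_r$ noted just before Proposition~\ref{prop:invt-ann} we get $\sigma_r(N)=\sigma_r(M)\le\tau_r(M)<\infty$. Applying the preceding proposition to $N\in\Mod_{A,r}$ then yields, for every $i>0$,
\[
\tau_{r-1}(\rR^i\Sigma_{\ge r}(M))=\tau_{r-1}(\rR^i S_{\ge r}(N))\le\sigma_r(N)+\max(\sigma_r(N),\rho_r(N))\le\tau_r(M)+\max(\tau_r(M),\rho_r(M)),
\]
which is finite.

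I do not expect a genuine obstacle, since the substantive computation — the spectral sequence coming from $\rR S_{\ge r}=\rR\pi_*\circ\rR S'$, the estimate $\tau_{r-1}([\mu,\lambda]_j)\le 2\vert\lambda\vert$ on the higher Bott contributions, and the d\'evissage along the filtration $F^r_\bullet$ — was carried out in the preceding proposition. The only points requiring a little care are the two identities $\sigma_r(T_{\ge r}(M))=\sigma_r(M)$ and $\rho_r(T_{\ge r}(M))=\rho_r(M)$ (both immediate from the definitions), and the identification $\rR\Sigma_{\ge r}(M)\cong\rR S_{\ge r}(T_{\ge r}(M))$; the latter is also the reason the statement is restricted to $i>0$, the $i=0$ piece concerning the saturation $\Sigma_{\ge r}(M)$ itself, which is a separate matter.
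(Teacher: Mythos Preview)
Your proposal is correct and is exactly the argument the paper leaves implicit: reduce to the preceding proposition by passing to $N=T_{\ge r}(M)\in\Mod_{A,r}$ via $\rR\Sigma_{\ge r}(M)=\rR S_{\ge r}(T_{\ge r}(M))$, then read off $\sigma_r(N)\le\tau_r(M)$ and $\rho_r(N)=\rho_r(M)$ from the definitions. One minor remark: your final sentence slightly misattributes the restriction to $i>0$; it is inherited directly from the $n>0$ hypothesis in the proposition (the $j=0$, $i=0$ term of the spectral sequence there is not controlled), not from any subtlety in the identification $\rR\Sigma_{\ge r}=\rR S_{\ge r}\circ T_{\ge r}$, which holds in all degrees.
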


\begin{proposition} \label{prop:taubd2}
Let $M$ be an $A$-module. Fix $r$ and let $n=r(d-r)$. Suppose that $\tau_r(M)$ and $\reg_{n+1}(M)$ are finite. Then $\tau_{r-1}(\rR \Gamma_{<r}(M))$ is finite.
\end{proposition}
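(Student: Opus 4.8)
The plan is to relate $\rR\Gamma_{<r}(M)$ to the complex $\rR\Sigma_{\ge r}(M)$ for which we already have the relevant finiteness from Corollary~\ref{cor:taubd}, and then fill the gap between $\rR\Gamma_{<r}(M)$ and $\rR\Gamma_{\le r-1}$ inductively. Recall from \cite{symu1} that for any $A$-module $M$ there is a fundamental triangle
\begin{displaymath}
\rR\Gamma_{\le r}(M) \to M \to \rR\Sigma_{\ge r+1}(M) \to
\end{displaymath}
(the saturation functor $\rR\Sigma_{\ge r+1}$ being the cone of $\rR\Gamma_{\le r}$ acting on $M$). Applying this with $r$ replaced by $r-1$ yields a triangle $\rR\Gamma_{\le r-1}(M) \to M \to \rR\Sigma_{\ge r}(M)\to$, and $\rR\Gamma_{<r}(M) = \rR\Gamma_{\le r-1}(M)$. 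So it suffices to bound $\tau_{r-1}$ of the two outer terms of
\begin{displaymath}
\rR\Gamma_{\le r-1}(M) \to M \to \rR\Sigma_{\ge r}(M)\to,
\end{displaymath}
i.e.\ to bound $\tau_{r-1}(M)$ (trivial: $\tau_{r-1}(M)\le\tau_r(M)+\cdots$, but actually we need finiteness of $\tau_{r-1}(M)$ itself, which follows since $\tau_r(M)$ finite plus $\reg_0(M)$ finite forces $M$ to be a quotient of a finite-degree free module, so $\tau_{r-1}(M)<\infty$) and $\tau_{r-1}(\rR\Sigma_{\ge r}(M))$.

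The content is therefore the bound on $\tau_{r-1}(\rR^i\Sigma_{\ge r}(M))$ for every $i$. For $i>0$ this is exactly Corollary~\ref{cor:taubd}, provided we know $\tau_r(M)$ and $\rho_r(M)$ are finite; $\tau_r(M)$ is given, and $\rho_r(M)$ is finite by Proposition~\ref{prop:regsh} (or its corollary) since $\reg_{n-1}(M)\le\reg_{n+1}(M)$ is finite and $M$ is annihilated by $\fa_r^{k}$ for some $k$ by Proposition~\ref{prop:invt-ann}(a). For $i=0$ we have $\rR^0\Sigma_{\ge r}(M) = \Sigma_{\ge r}(M)$, which sits in an exact sequence $0\to\Gamma_{\le r-1}(M)\to M\to\Sigma_{\ge r}(M)$, so $\tau_{r-1}(\Sigma_{\ge r}(M))$ is controlled once we control $\tau_{r-1}$ of $M$ and of $\rR^0\Gamma_{\le r-1}(M)=\Gamma_{\le r-1}(M)$. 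But $\Gamma_{\le r-1}(M)$ is a submodule of $M$, hence $\tau_{r-1}(\Gamma_{\le r-1}(M))\le\tau_{r-1}(M)<\infty$. Combining: $\tau_{r-1}(\Sigma_{\ge r}(M))<\infty$, and then from the triangle above $\tau_{r-1}(\rR\Gamma_{\le r-1}(M))$ is bounded by the max of $\tau_{r-1}(M)$ and $\tau_{r-1}(\rR\Sigma_{\ge r}(M))$ (shifted by one in the long exact sequence), all of which are finite.

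The one point requiring care — and the main obstacle — is that Corollary~\ref{cor:taubd} bounds $\tau_{r-1}(\rR^i\Sigma_{\ge r}(M))$ only for $i>0$ and only after localizing to $\Mod_{A,\ge r}$; one must check that the input hypotheses propagate correctly, namely that $\rho_r(M)=\rho_r(T_{\ge r}(M))$ is genuinely finite. This is where $\reg_{n+1}$ rather than merely $\reg_0$ enters: we need $\reg_{n-1}(M)$ finite to invoke Proposition~\ref{prop:regsh}, and $n-1 = r(d-r)-1 \le n+1$, so the hypothesis is strong enough. (The extra slack up to $n+1$ is presumably used in the stronger statements downstream, e.g.\ for controlling $\rR^i\Gamma$ rather than just finiteness.) A secondary subtlety is making sure the passage between $\rR\Gamma_{<r}(M)$ as a complex of $A$-modules and the localized picture does not lose the $\tau_{r-1}$-bound on $\rR^0$ — but since $\Gamma_{\le r-1}(M)\subseteq M$ literally, there is no loss. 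Assembling the long exact cohomology sequences of the two triangles and taking suprema over degrees then gives the finiteness of $\tau_{r-1}(\rR\Gamma_{<r}(M))$, completing the proof.
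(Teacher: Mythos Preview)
There is a genuine gap in your argument at the step where you claim $\tau_{r-1}(M)<\infty$. This assertion is false. Your justification is that $M$ is a quotient of a finite-degree free module $A\otimes V$; but $\tau_{r-1}(A\otimes V)=\infty$ whenever $r-1<d$ (for instance $A$ itself contains $\bS_{(k^d)}(\bV)\otimes\bS_{(k^d)}(E)$ for all $k$). Even using that $\tau_r(M)$ is finite, so that $M$ is a quotient of $A/\fa_r^k\otimes V$, does not help: already $A/\fa_r$ contains $\bS_{(k^r)}(\bV)\otimes\bS_{(k^r)}(E)$ for all $k$, so $\tau_{r-1}(A/\fa_r)=\infty$. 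Consequently your bound $\tau_{r-1}(\Gamma_{<r}(M))\le\tau_{r-1}(M)$, while formally correct, is vacuous, and likewise your attempt to control $\tau_{r-1}(\Sigma_{\ge r}(M))$ via the sequence $0\to\Gamma_{<r}(M)\to M\to\Sigma_{\ge r}(M)$ fails: both $M$ and $\Sigma_{\ge r}(M)$ typically have infinite $\tau_{r-1}$ (take $M=A/\fa_r$, which is already saturated). The triangle argument therefore handles only $\rR^i\Gamma_{<r}(M)$ for $i\ge 2$ --- exactly the range where $\rR^i\Gamma_{<r}(M)\cong\rR^{i-1}\Sigma_{\ge r}(M)$ and Corollary~\ref{cor:taubd} applies directly --- but breaks down for $i=0,1$.

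The paper circumvents this by a dimension shift that is not visible in your outline. One chooses a presentation $0\to N\to A/\fa_r^k\otimes V\to M\to 0$ with $V$ of finite degree. The key point is that $\Gamma_{<r}(A/\fa_r^k\otimes V)=0$, so the long exact sequence for $\rR\Gamma_{<r}$ gives an injection $\Gamma_{<r}(M)\hookrightarrow\rR^1\Gamma_{<r}(N)$ and an exact sequence $\rR^1\Gamma_{<r}(A/\fa_r^k\otimes V)\to\rR^1\Gamma_{<r}(M)\to\rR^2\Gamma_{<r}(N)$. Now $\rR^2\Gamma_{<r}(N)$ is handled by the $i\ge 2$ case applied to $N$ (which has $\reg_{n-1}$ finite since $\reg_n(M)$ is), and $\rR^1\Gamma_{<r}(A/\fa_r^k\otimes V)$ is finite by Proposition~\ref{prop:sumsupfin}. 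This bounds $\tau_{r-1}(\rR^1\Gamma_{<r}(M))$; a second dimension shift (now needing $\reg_{n+1}(M)$) handles $\Gamma_{<r}(M)$. This is precisely why the hypothesis is $\reg_{n+1}$ rather than $\reg_{n-1}$: each of the two problematic cohomological degrees costs one extra syzygy.
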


\begin{proof}
We actually prove the following more precise statements:
\begin{enumerate}
\item If $\tau_r(M)$ and $\reg_{n-1}(M)$ are finite then so is $\tau_{r-1}(\rR^i \Gamma_{<r}(M))$ for all $i \ge 2$.
\item If $\tau_r(M)$ and $\reg_n(M)$ are finite then so is $\tau_{r-1}(\rR^1 \Gamma_{<r}(M))$.
\item If $\tau_r(M)$ and $\reg_{n+1}(M)$ are finite then so is $\tau_{r-1}(\Gamma_{<r}(M))$.
\end{enumerate}
We begin with (a). Since $\rR^i \Gamma_{<r}(M)=\rR^{i-1}\Sigma_{\ge r}(M)$ for $i \ge 2$, the result follows from the previous corollary ($\rho_r$ can be bounded from $\reg_{n-1}$ by Proposition~\ref{prop:regsh}).

We now prove (b). Consider an exact sequence
\begin{displaymath}
0 \to N \to A/\fa_r^k \otimes V \to M \to 0
\end{displaymath}
with $V$ of finite degree. Since $\reg_n(M)$ is finite, so is $\reg_{n-1}(N)$. We have an exact sequence
\begin{displaymath}
\rR^1 \Gamma_{<r}(A/\fa_r^k \otimes V) \to \rR^1 \Gamma_{<r}(M) \to \rR^2 \Gamma_{<r}(N)
\end{displaymath}
The left term has finite $\tau_{r-1}$ (Proposition~\ref{prop:sumsupfin}). The right term has finite $\tau_{r-1}$ by part (a). Thus the middle term has finite $\tau_{r-1}$.

We now prove (c). Consider the same exact sequence. Since we now assume $\reg_{n+1}(M;A)$ is finite, we have $\reg_n(N)$ finite. Since $\Gamma_{<r}(A/\fa_r^k)=0$, we have an injection $\Gamma_{<r}(M) \to \rR^1 \Gamma_{<r}(N)$. Since the target has finite $\tau_{r-1}$ by (b), the source has finite $\tau_{r-1}$.
\end{proof}

We now generalize Proposition~\ref{prop:taubd2} to complexes. Recall that $F^r_nM$ is the sum of the $\lambda$-isotypic pieces of $M$ with $\tau_r(\lambda) \ge n$, and this is a submodule of $M$. We define $E^r_nM$ to be the quotient module $M/F^r_{n+1} M$. This is the maximal quotient having $\tau_r \le n$. We note that this only depends on the underlying $\GL_{\infty}$-representation, and so $E^r_n$ is an exact functor. The functor $E^r_n$ is only used in the following proof.

\begin{proposition} \label{prop:taubd}
Let $M \in \rD^{\ge 0,b}(A)$. Suppose that $\tau_r(M)$ and $\reg_{n+1}(M)$ are finite, where $n=r(d-r)$. Then $\rR \Gamma_{<r}(M)$ is $\tau_{r-1}$-finite.
\end{proposition}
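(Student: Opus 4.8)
The plan is to reduce to the module case, Proposition~\ref{prop:taubd2}. Since $\reg_{n+1}(M)$ is finite, so is $\reg_{-1}(M)$, so Proposition~\ref{prop:concentrate} provides a distinguished triangle $F \to M \to N \to F[1]$ in which $F$ is a bounded complex of finite degree free modules with $F^j=0$ for $j\le 0$, and $N$ is an $A$-module placed in degree $0$. A bounded complex of finite degree free modules has finite regularity, so from the triangle I would deduce that $\reg_{n+1}(N;A)$ is finite. Applying $\rR\Gamma_{<r}$ to the triangle reduces the problem to showing that $\rR\Gamma_{<r}(F)$ and $\rR\Gamma_{<r}(N)$ are $\tau_{r-1}$-finite. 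The first is handled exactly as in Proposition~\ref{prop:taubd2}: $F$ is bounded, its terms are direct sums of copies of the finitely generated modules $A\otimes\bS_\lambda(\bV)$ with $\vert\lambda\vert$ bounded, each $\rR^i\Gamma_{<r}(A\otimes\bS_\lambda(\bV))$ is finitely generated and supported on $V(\fa_{r-1})$ by \cite[Theorem~6.10]{symu1} hence $\tau_{r-1}$-finite by Proposition~\ref{prop:invt-ann}(c), and $\tau_{r-1}$ satisfies the sum-sup condition.

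The essential difficulty is that $\tau_r(N)$ need not be finite — typically it is not, since $N$ absorbs the free modules appearing in a resolution of $M$ in non-positive cohomological degrees — so Proposition~\ref{prop:taubd2} cannot be applied to $N$ directly. To circumvent this I would use the truncation functor $E^r_s$ introduced just before the statement. Put $s=\tau_r(M)<\infty$ and consider the short exact sequence
\[
0 \to F^r_{s+1}N \to N \to E^r_sN \to 0.
\]
The key observation is that $F^r_{s+1}$, being an exact endofunctor of $\Mod_A$ (it takes an $A$-submodule of each module), induces a triangulated endofunctor of $\rD(A)$, and it annihilates $M$: indeed $\rH^q(F^r_{s+1}M)=F^r_{s+1}(\rH^q(M))=0$ for all $q$ because $\tau_r(\rH^q(M))\le\tau_r(M)=s$. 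Applying $F^r_{s+1}$ to the triangle $F\to M\to N$ thus yields a quasi-isomorphism $F^r_{s+1}N\simeq F^r_{s+1}(F)[1]$. Now $F^r_{s+1}(F)$ is a bounded complex whose terms $F^r_{s+1}(A\otimes V)$ are, since $F^r_{s+1}$ commutes with direct sums, direct sums of copies of the modules $F^r_{s+1}(A\otimes\bS_\lambda(\bV))$, which are finitely generated submodules of $A\otimes\bS_\lambda(\bV)$. Using that finitely generated $A$-modules have finite regularity and finitely generated derived torsion supported on $V(\fa_{r-1})$ \cite{symu1}, together with the sum-sup conditions for $\reg$ and $\tau_{r-1}$ and Proposition~\ref{prop:invt-ann}(c), I would conclude that $\reg_{n+1}(F^r_{s+1}(F))$ is finite and $\rR\Gamma_{<r}(F^r_{s+1}(F))$ is $\tau_{r-1}$-finite; hence $\reg_{n+1}(F^r_{s+1}N;A)$ is finite and $\rR\Gamma_{<r}(F^r_{s+1}N)$ is $\tau_{r-1}$-finite.

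With these in hand, the long exact sequence in $\Tor$ associated to $0\to F^r_{s+1}N\to N\to E^r_sN\to 0$, together with the finiteness of $\reg_{n+1}(N;A)$ and $\reg_{n+1}(F^r_{s+1}N;A)$, shows that $\reg_{n+1}(E^r_sN;A)$ is finite. Since also $\tau_r(E^r_sN)\le s$ is finite, Proposition~\ref{prop:taubd2} applies to the module $E^r_sN$ and gives that $\rR\Gamma_{<r}(E^r_sN)$ is $\tau_{r-1}$-finite. Applying $\rR\Gamma_{<r}$ to the short exact sequence then gives the same for $N$, and applying it to the triangle $F\to M\to N$ gives it for $M$. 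I expect the main obstacle to be precisely the control of $N$: that $\tau_r(N)$ can be infinite is what forces the detour through $E^r_s$ and $F^r_{s+1}$, and the step one has to get right is the vanishing $F^r_{s+1}M\simeq 0$, which replaces the uncontrolled module $F^r_{s+1}N$ by $F^r_{s+1}$ of the well-behaved finite complex $F$, where the finite generation results of \cite{symu1} can be brought to bear.
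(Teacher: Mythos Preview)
Your proposal is correct and uses the same essential idea as the paper: exploit $\tau_r(M)\le s$ together with the exact truncation functors $F^r_{s+1}$, $E^r_s$ to pass to the module $E^r_sN$, which has finite $\tau_r$ and finite $\reg_{n+1}$, so that Proposition~\ref{prop:taubd2} applies. The paper's execution is slightly more streamlined: it applies $E^r_k$ directly to the triangle $F\to M\to N$ and uses $E^r_k(M)\simeq M$ to obtain a single triangle $E^r_k(F)\to M\to E^r_k(N)$, so that only the two terms $E^r_k(F)$ and $N'=E^r_k(N)$ need to be handled, rather than treating $F$, $F^r_{s+1}N\simeq F^r_{s+1}(F)[1]$, and $E^r_sN$ separately as you do.
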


\begin{proof}
Let $k=\tau_r(M)$. Pick a triangle
\begin{displaymath}
F \to M \to N \to
\end{displaymath}
as in Proposition~\ref{prop:concentrate}. Apply $E^r_k$ to the above triangle. Since this is exact, it is well-defined on the derived category and commutes with cohomology. In particular, the natural map $M \to E^r_k(M)$ is an isomorphism in the derived category, since it is an isomorphism on each cohomology group. We thus obtain a triangle
\begin{displaymath}
E^r_k(F) \to M \to N' \to
\end{displaymath}
where $N'=E^r_k(N)$. Now, $E^r_k(F)$ has finite regularity (Proposition~\ref{prop:sumsupfin}), and so $\reg_{n+1}(N')$ is finite. Similarly, $E^r_k(F)$ is $\tau_r$-finite, and so $N'$ is as well. Consider the exact triangle
\begin{displaymath}
\rR \Gamma_{<r}(E^r_k(F)) \to \rR \Gamma_{<r}(M) \to \rR \Gamma_{<r}(N') \to
\end{displaymath}
The right term is $\tau_{r-1}$-finite by Proposition~\ref{prop:taubd2}, and the left term is $\tau_{r-1}$-finite by Proposition~\ref{prop:sumsupfin}. Thus the middle term is $\tau_{r-1}$-finite, which completes the proof.
\end{proof}

\subsection{Bounds on regularity of derived saturation}

Fix an integer $0 \le r \le d$. Let $Y=\Gr_r(E)$, let $\cQ$ be the tautological bundle, and let $\pi \colon Y \to \Spec(\bC)$ be the structure map. Let $B=\bA(\cQ)$ and let $S' \colon \Mod_B^{\gen} \to \Mod_B$ be the section functor. For $M \in \rD^b(B)$, we define $\reg(M;B)$ to be the minimal integer $\rho$ such that $\rH^{-i}(M \otimes^{\rL}_B \cO_Y)$ is supported in degrees $\le i+\rho$ for all $i$.

\begin{lemma} \label{lem:regsat1}
Let $M \in \rD^b(B)$. Suppose that $\reg(M;B)$ is finite. Then $\reg(\rR \pi_*(M); A)$ is also finite.
\end{lemma}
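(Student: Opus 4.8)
The plan is to reduce to the case where $M$ is a single $B$-module (placed in degree~$0$) and then unwind what "$\reg(M;B)$ finite" buys us. Since $\reg(M;B)$ is finite, only finitely many cohomology groups $\rH^j(M)$ are nonzero, and each is a finitely generated $B$-module with bounded generator degree; by a standard d\'evissage on the truncation triangles $\tau_{\le j}M \to M \to \tau_{\ge j+1}M$ and the fact that $\rR\pi_*$ is a triangulated functor, it suffices to treat the case $M = N$ for a single $B$-module $N$ with $\reg(N;B) = \rho$ finite. First I would take a (possibly infinite) minimal free resolution $B \otimes V_\bullet \to N$ over $B$, where each $V_i \in \cV_Y$ is a sheaf-version representation and the finiteness of $\reg(N;B)$ controls the generator degrees: $\Tor^B_i(N, \cO_Y)$ lives in degrees $\le i + \rho$. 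Truncating this resolution at step $n+1$ (or higher), where $n = \dim Y$, and splicing in the cokernel, I get a finite-length complex $B \otimes V_\bullet \to N$ with each $V_i$ of bounded degree up to the truncation point, plus a syzygy term.

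The key input is how $\rR\pi_*$ interacts with $\rR S_{\ge r}$ and the free modules $B \otimes V$: the paper has already established (via \cite[Corollary~5.20]{symu1} and the identification $\rR S_{\ge r} = \rR\pi_* \circ \rR S'$) that derived global sections of free $B$-modules, and the resulting $A$-modules $\rR\pi_*(\rR S'(B \otimes V))$, are understood explicitly and are well-behaved — in particular their regularity over $A$ is finite, and by Proposition~\ref{prop:sumsupfin} this persists after tensoring with any finite-degree $V$. So the strategy is: (i) replace $M$ by its minimal free resolution over $B$, apply $\rR\pi_*$ termwise, and build a spectral sequence (or iterated triangle) computing $\rR\pi_*(M)$ from the pieces $\rR\pi_*(B \otimes V_i)$; (ii) each such piece has finite regularity over $A$; (iii) the generator-degree bounds coming from $\reg(N;B) < \infty$ mean that in each homological degree only finitely many $V_i$ (with controlled degree) contribute, so the spectral sequence has a bounded number of nonzero entries in each total degree, each $\reg$-finite; (iv) conclude $\reg(\rR\pi_*(M);A)$ is finite by the usual "finite extension of $\reg$-finite objects is $\reg$-finite" argument applied along the spectral sequence filtration.

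Concretely, the inductive/d\'evissage engine is the same one used throughout \S\ref{s:regbd}: given a triangle $X' \to X \to X'' \to$ with $\reg(X';A)$ and $\reg(X'';A)$ finite, the long exact sequence on $\Tor^A(-,\bC)$ forces $\reg(X;A)$ finite. I would apply this first to split off the syzygy module of the truncated resolution (handled by Proposition~\ref{prop:concentrate} / Proposition~\ref{prop:regsh}-style arguments, since the truncation depth $n = r(d-r) = \dim Y$ is exactly what makes the higher $\rR^i\pi_*$ of the tail vanish), and then to the (now finite) complex of free $B$-modules. The higher direct images $\rR^i\pi_*$ vanish for $i > \dim Y$, which is precisely why truncating the resolution at length $n+1$ is enough to compute $\rR\pi_*(N)$ correctly — this is the analogue of Proposition~\ref{prop:coh} on the Grassmannian side.

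The main obstacle I expect is step~(iii): controlling the free resolution of $N$ over $B$ well enough that only finitely many generators (of bounded degree) matter in any fixed homological degree after applying $\rR\pi_*$. The subtlety is that the resolution is genuinely infinite, and although $\reg(N;B)<\infty$ bounds the \emph{degrees} of generators of $\Tor^B_i$, the number of such generators grows with $i$; what saves us is that $\rR\pi_*(B\otimes V_i)$ is supported in cohomological degrees $0$ through $\dim Y$ only, so contributions to $\rH^k(\rR\pi_*(M))$ come from $V_i$ with $i$ in a bounded window around $k$, and the degree bound then makes that a finite set. Making this windowing precise — i.e., showing the spectral sequence $\rR^i\pi_*(B \otimes V_j) \Rightarrow \rH^{i+j}(\rR\pi_*(M))$ (after applying $\rR S'$) converges with uniformly bounded, uniformly $\reg$-finite entries — is where the real work lies; everything else is the standard triangle/d\'evissage machinery already deployed repeatedly in this section.
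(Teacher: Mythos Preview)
There is a genuine gap in step~(ii). The terms of a free resolution over $B$ have the form $B \otimes_{\cO_Y} V_i$ where each $V_i$ is an $\cO_Y$-module in $\cV$, i.e., $V_i = \bigoplus_{|\lambda| \le i+\rho} \bS_\lambda(\bV) \otimes_\bC \cF_{i,\lambda}$ with $\cF_{i,\lambda}$ arbitrary quasi-coherent sheaves on $Y$---not merely a finite-degree $\GL_\infty$-representation pulled back from a point. Proposition~\ref{prop:sumsupfin} and the explicit formulas from \cite{symu1} that you invoke only handle the case where the coefficient is a pure representation; they say nothing about $\rR\pi_*(B \otimes_{\cO_Y} \cF)$ for a general sheaf $\cF$. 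But that is precisely the $\reg(-;B)=0$ case of the lemma you are trying to prove, so as written the argument is circular. (A smaller confusion: the lemma concerns $\rR\pi_*$ on $\rD^b(B)$ directly; $\rR S'$ plays no role here, and the formulas from \cite[Corollary~5.20]{symu1} compute a different functor.)

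The paper avoids this by first passing from $B$ to $\pi^*(A)$: since $B$ is a finitely generated $\pi^*(A)$-module of finite regularity, an argument like Proposition~\ref{prop:cmpreg} gives $\reg(M;\pi^*(A))<\infty$. Koszul duality \cite[\S 7]{symu1} then converts finite regularity into boundedness of $\sK_{\pi^*(E)}(M)$; boundedness is preserved by $\rR\pi_*$ (as $\dim Y<\infty$), and Koszul duality commutes with $\rR\pi_*$, yielding boundedness of $\sK_E(\rR\pi_*(M))$, i.e., finite $A$-regularity. Your approach can be repaired along the same lines by resolving over $\pi^*(A)$ rather than $B$: free $\pi^*(A)$-modules have the form $\pi^*(A) \otimes_{\cO_Y} W$, and the projection formula gives $\rR\pi_*(\pi^*(A) \otimes_{\cO_Y} W) \cong A \otimes_\bC \rR\pi_*(W)$, a bounded complex of genuinely free $A$-modules of the same $\GL_\infty$-degree as $W$. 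Your windowing argument in step~(iii) then applies; this is essentially the Koszul-duality proof unpacked.
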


\begin{proof}
This more or less follows from the methods in \cite[\S 5]{weyman}. To give details, we use the results on Koszul duality in \cite{symu1}, one of which encodes this method into the present setting. First, note that there is a surjection $E \otimes \cO_Y \to \cQ$ of vector bundles, and thus a surjection of tca's $\pi^*(A) \to B$. Thus $B$ is a finitely generated $\pi^*(A)=\bA(\pi^*(E))$-module, and as such, has finite regularity over $\pi^*(A)$. By an argument similar to the one in Proposition~\ref{prop:cmpreg}, we see that $\reg(M;\pi^*(A))$ is finite. Now, let $\sK_{\pi^*(E)}$ and $\sK_E$ be the Koszul duality functors on $\pi^*(A)$- and $A$-modules, respectively, as defined in \cite[\S 7.1.1]{symu1}. Since $\reg(M; \pi^*(A))$ is finite, it follows from \cite[Proposition~7.1]{symu1} that $\sK_{\pi^*(E)}(M)$ is a bounded complex, i.e., it has only finitely many non-zero cohomology groups. Since $Y$ is finite dimensional, it follows that $\rR \pi_*(\sK_{\pi^*(E)})(M)$ is bounded. By \cite[Proposition~7.5]{symu1}, Koszul duality commutes with pushforward, and so $\sK_E(\rR \pi_*(M))$ is bounded. But this exactly means that $\rR \pi_*(M)$ has finite regularity over $A$, by \cite[Proposition~7.1]{symu1} again.
\end{proof}

\begin{lemma} \label{lem:regsat2}
Let $M \in \rD^b(B)$ and let $\cF$ be an $\cO_Y$-module. Suppose that $\reg(M;B)$ is finite. Then $\reg(M \otimes^{\rL}_{\cO_Y} \cF; B)$ is finite.
\end{lemma}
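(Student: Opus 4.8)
The plan is to reduce to the case where $\cF$ is flat over $\cO_Y$, and then to show that tensoring a $B$-module complex with a flat $\cO_Y$-module cannot increase $\reg(-;B)$.

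For the reduction: $Y$ is smooth of dimension $n=r(d-r)$, so $\cO_Y$ is locally regular of dimension $\le n$, and hence every $\cO_Y$-module $\cF$ has flat dimension $\le n$. Choosing any flat resolution of $\cF$ and truncating at the $n$-th syzygy produces a finite resolution $0 \to \cF_n \to \cdots \to \cF_0 \to \cF \to 0$ with each $\cF_i$ flat over $\cO_Y$. Then $M \otimes^{\rL}_{\cO_Y}\cF$ is computed by the total complex of the double complex $M \otimes_{\cO_Y}\cF_\bullet$ of $B$-modules, which is built by finitely many exact triangles out of shifts of the $B$-module complexes $M \otimes_{\cO_Y}\cF_i$. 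Since $\reg(-;B)$ satisfies a two-out-of-three rule in triangles of $\rD^b(B)$ --- immediate from the long exact sequence of $\rH^\bullet(-\otimes^{\rL}_B \cO_Y)$ --- and changes only by a bounded amount under shifts, it suffices to bound $\reg(M\otimes_{\cO_Y}\cF_i;B)$ for each fixed flat $\cF_i$. Thus we may assume $\cF$ is flat.

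Now suppose $\cF$ is flat over $\cO_Y$. Let $P_\bullet \to M$ be a resolution by free $B$-modules (sums of $B \otimes V$ with $V \in \cV$). Flat base change along $\cO_Y \to B$ shows $B \otimes_{\cO_Y}\cG$ is flat over $B$ whenever $\cG$ is flat over $\cO_Y$, and exactness of $-\otimes_{\cO_Y}\cF$ shows $P_\bullet \otimes_{\cO_Y}\cF \to M \otimes_{\cO_Y}\cF$ is again a resolution by flat $B$-modules. Tensoring over $B$ with $\cO_Y$ and comparing term by term gives a natural isomorphism
\[
(M \otimes_{\cO_Y}\cF) \otimes^{\rL}_B \cO_Y \;\cong\; (M \otimes^{\rL}_B \cO_Y) \otimes_{\cO_Y}\cF
\]
in the derived category of polynomial $\GL_\infty$-equivariant $\cO_Y$-modules. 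As $\cF$ is flat, $\rH^{-i}$ of the right-hand side is $\rH^{-i}(M \otimes^{\rL}_B \cO_Y) \otimes_{\cO_Y}\cF$; tensoring with $\cF$ only alters the multiplicity sheaves in the isotypic decomposition, so it introduces no new occurring partition. By the definition of $\reg(M;B)$, the partitions occurring in $\rH^{-i}(M \otimes^{\rL}_B \cO_Y)$ all have size $\le i + \reg(M;B)$, so the same holds after tensoring with $\cF$, whence $\reg(M\otimes_{\cO_Y}\cF;B) \le \reg(M;B)$. Combining this with the reduction gives $\reg(M\otimes^{\rL}_{\cO_Y}\cF;B) \le \reg(M;B)+n < \infty$.

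There is no serious obstacle here; the content is essentially formal. The two points to be careful about are the base-change isomorphism displayed above (routine once one checks the relevant $B$-modules are flat over $B$) and the bookkeeping that tensoring over $\cO_Y$ with $\cF$ keeps the module polynomial and cannot enlarge its set of occurring partitions.
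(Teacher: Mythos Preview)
Your argument is correct and follows essentially the same approach as the paper. The paper handles the flat case by the same base-change identity $\rH^{-i}((M \otimes^{\rL}_{\cO_Y} \cF) \otimes^{\rL}_B \cO_Y)=\cF \otimes_{\cO_Y} \rH^{-i}(M \otimes^{\rL}_B \cO_Y)$, and reduces to that case by induction on Tor-dimension via a short exact sequence $0 \to \cF_1 \to \cF_2 \to \cF \to 0$ with $\cF_2$ flat; your unwinding of this induction into a single finite flat resolution (using smoothness of $Y$) is equivalent, and your explicit bound $\reg(M\otimes^{\rL}_{\cO_Y}\cF;B)\le \reg(M;B)+\dim Y$ is a pleasant byproduct.
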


\begin{proof}
First suppose $\cF$ is $\cO_Y$-flat. Then we have
\begin{displaymath}
\rH^{-i}((M \otimes^{\rL}_{\cO_Y} \cF) \otimes^{\rL}_B \cO_Y)=\cF \otimes_{\cO_Y} \rH^{-i}(M \otimes^{\rL}_B \cO_Y),
\end{displaymath}
and so the result follows. If $\cF$ is not flat, pick a short exact sequence
\begin{displaymath}
0 \to \cF_1 \to \cF_2 \to \cF \to 0
\end{displaymath}
where $\cF_2$ is $\cO_Y$-flat. Then $\cF_1$ has strictly smaller Tor-dimension than $\cF$, and so (by induction on Tor-dimension) we can assume that the lemma holds for $\cF_1$. We have a triangle
\begin{displaymath}
M \otimes^{\rL}_{\cO_Y} \cF_1 \to M \otimes^{\rL}_{\cO_Y} \cF_2 \to M \otimes^{\rL}_{\cO_Y} \cF \to
\end{displaymath}
Since the left two terms have finite regularity, so does the third.
\end{proof}

\begin{proposition} \label{prop:regsat}
Let $M \in \rD^b(A)$. Suppose that $\tau_r(M)$ is finite. Then $\reg(\rR \Sigma_{\ge r}(M))$ is finite.
\end{proposition}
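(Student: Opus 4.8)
The plan is a dévissage that reduces the statement to the ``rank exactly $r$'' pieces over the Grassmannian $Y=\Gr_r(E)$, where the Koszul-duality input of Lemmas~\ref{lem:regsat1} and~\ref{lem:regsat2} applies. I will use repeatedly that $\rR\Sigma_{\ge r}=\rR S_{\ge r}\circ T_{\ge r}$ is triangulated (recall $T_{\ge r}$ is exact), that $\reg(\cdot)$ passes through distinguished triangles --- from the long exact sequence in $\rH^{\bullet}(-\otimes^{\rL}_A\bC)$, if two of the three terms of a triangle have finite regularity then so does the third --- and that $\reg$ of a finite direct sum is the maximum of the summands' regularities.

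First I reduce to the case that $M$ is a single module: using the canonical truncation triangles and inducting on the number of nonzero cohomology groups of $M$ (the truncations have $\tau_r$ no larger than $\tau_r(M)$) does this. So assume $M$ is a module and put $k=\tau_r(M)<\infty$. By Proposition~\ref{prop:invt-ann}(a), $M$ is annihilated by $\fa_r^{k+1}$, hence supported on $V(\fa_r)$, so $N:=T_{\ge r}(M)$ lies in $\Mod_{A,r}$ and $\sigma_r(N)=\sigma_r(M)\le k$. Therefore the filtration $F^r_{\bullet}N$ terminates ($F^r_{k+1}N=0$), and its associated graded is the finite direct sum $\bigoplus_{|\mu|\le k}\gr^r_{\mu}(N)$ of $A$-modules. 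By the generalities above it suffices to prove that $\reg(\rR S_{\ge r}(\gr^r_{\mu}(N)))$ is finite for each of the finitely many relevant $\mu$.

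Fix such a $\mu$. By Corollary~\ref{cor:sh-K}, $\gr^r_{\mu}(N)\in\Mod_{A,r}[\fa_r]$ corresponds under $\Psi$ to $\cF\otimes\bS_{\mu}(\cK)$, where $\cF=\sh^r_{\mu}(M)$ is an $\cO_Y$-module, and by \cite[Proposition~6.8]{symu1} we have $\rR S_{\ge r}(\gr^r_{\mu}(N))\cong\rR\pi_*\,\rR S'(\cF\otimes\bS_{\mu}(\cK))$. Hence, by Lemma~\ref{lem:regsat1}, it is enough to show $\reg(\rR S'(\cF\otimes\bS_{\mu}(\cK));B)<\infty$. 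Since $\bS_{\mu}(\cK)$ is $\cO_Y$-flat (its saturation is a direct sum of vector bundles by \cite[Corollary~5.20]{symu1}) and the $\cO_Y$-coefficient pulls out of $\rR S'$ (again visible from \cite[Corollary~5.20]{symu1}), there is a projection formula $\rR S'(\cF\otimes\bS_{\mu}(\cK))\cong\cF\otimes^{\rL}_{\cO_Y}\rR S'(\bS_{\mu}(\cK))$, so by Lemma~\ref{lem:regsat2} it suffices to show $\reg(\rR S'(\bS_{\mu}(\cK));B)<\infty$. Finally, $\cK$ is finitely generated over the noetherian tca $B$ (a kernel of a map of finitely generated modules), so $\bS_{\mu}(\cK)$ is finitely generated and hence has a finite injective resolution $\bS_{\mu}(\cK)\to J^{\bullet}$ in $\Mod_B^{\gen}$ whose terms are finite direct sums of localizations $T(P^i)$ of projective $B$-modules $P^i$ of finite degree. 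As projective $B$-modules are derived saturated (\cite[Corollary~5.17]{symu1}), $\rR S'(\bS_{\mu}(\cK))\cong S'(J^{\bullet})$ is the bounded complex $P^{\bullet}$ of finite-degree projective $B$-modules; since $P^{\bullet}\otimes^{\rL}_B\cO_Y=P^{\bullet}\otimes_B\cO_Y$ is then a bounded complex of $\cO_Y$-modules concentrated in finitely many internal degrees, $\reg(P^{\bullet};B)$ is finite. (Alternatively one may read $\rR S'(\bS_{\mu}(\cK))$ off directly from \cite[Corollary~5.20]{symu1}.) This completes the argument.

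I expect two genuine obstacles. First, $M$ is not assumed finitely generated, so one cannot just invoke finiteness of regularity for finitely generated $A$-modules; what rescues the dévissage is precisely that $\tau_r(M)<\infty$ forces the filtration $F^r_{\bullet}$ to terminate with only finitely many nonzero $\gr^r_{\mu}$. Second, and more substantively, one must carry a (possibly non-coherent) $\cO_Y$-module coefficient $\cF$ through $\rR S'$ and pin down $\rR S'$ on the simple objects $\bS_{\mu}(\cK)$; the lemmas on $\reg(\cdot;B)$ and the Borel--Weil--Bott/Koszul-duality computations of \cite{symu1} are designed for exactly this, and I expect the careful verification of the projection formula (including $\cO_Y$-flatness of $\bS_{\mu}(\cK)$) and of the derived-saturation reduction to absorb most of the work.
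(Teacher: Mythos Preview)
Your proof is correct and follows essentially the same route as the paper's: reduce by d\'evissage to a module, pass to $T_{\ge r}(M)\in\Mod_{A,r}$, use finiteness of $\tau_r$ to ensure the $F^r_{\bullet}$ filtration terminates so only finitely many $\gr^r_{\mu}$ appear, identify each piece with $\cF\otimes\bS_{\mu}(\cK)$ via Corollary~\ref{cor:sh-K}, and then apply Lemmas~\ref{lem:regsat1} and~\ref{lem:regsat2} together with $\cO_Y$-linearity/flatness to reduce to the finiteness of $\reg(\rR S'(\bS_{\mu}(\cK));B)$. The paper dispatches this last step in one line by observing $\rR S'(\bS_{\mu}(\cK))\in\rD^b_{\fgen}(B)$ (and citing \cite[Corollary~5.16]{symu1} for the projection formula), whereas you spell it out via a finite injective resolution by localized finite-degree projectives and derived saturation; both are valid and amount to the same thing.
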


\begin{proof} 
By d\'evissage, we can reduce to the case where $M$ is a module. Let $M'=T_{\ge r}(M)$ so that $\rR \Sigma_{\ge r}(M)=\rR S_{\ge r}(M')$. Since $\sigma_r(M') \le \tau_r(M)$, we have $\gr^r_{\lambda}(M')=0$ for $\vert \lambda \vert \ge \tau_r(M)$. Thus, by a second d\'evissage, we can reduce to the case where $M'=\gr^r_{\lambda}(M')$ for some $\lambda$. Recall that $\Mod_{A,r}$ is equivalent to $\Mod_B^{\gen}$. Let $\cF \otimes \bS_{\lambda}(\cK)$ be the object of $\Mod_B^{\gen}$ corresponding to $M'$ (the object has this form by Proposition~\ref{prop:gr}). We have $\rR \Sigma_{\ge r}(M)=\rR \pi_* \rR S'(\cF \otimes_{\cO_Y} \bS_{\lambda}(\cK))$ by \cite[Proposition~6.8]{symu1}. Since $\bS_{\lambda}(\cK)$ is $\cO_Y$-flat, this tensor product is the same as the derived tensor product.  Let $N=\rR S'(\bS_{\lambda}(\cK))$. Then $\reg(N;B)$ is finite since $N \in \rD^b_{\fgen}(B)$. Since $\rR S'$ is $\cO_Y$-linear \cite[Corollary~5.16]{symu1}, we have
\begin{displaymath}
\rR S'(\cF \otimes_{\cO_Y} \bS_{\lambda}(\cK)) = \cF \otimes^{\rL}_{\cO_Y} N.
\end{displaymath}
By Lemma~\ref{lem:regsat2}, $\reg(\cF \otimes^{\rL}_{\cO_Y} N; B)$ is finite. Thus, by Lemma~\ref{lem:regsat1}, $\reg(\rR \pi_*(\cF \otimes^{\rL}_{\cO_Y} N); A)$ is finite, which completes the proof.
\end{proof}

\subsection{The main theorem}

The dimension of $\Gr_r(E)$ is $r(d-r)$. For $d$ fixed, this is maximized at $r=\tfrac{1}{2} d$, with maximum value $\tfrac{1}{4} d^2$. It follows that the maximum dimension of $\Gr_r(E)$ is $\lfloor \tfrac{1}{4} d^2 \rfloor$.

\begin{theorem} \label{thm:main2}
Let $M \in \rD^{\ge 0, b}(A)$. Let $n=\lfloor \tfrac{1}{4} d^2 \rfloor$. Suppose that $\reg_{n+1}(M)$ is finite. Then $\reg(\rR \Gamma_{\le r}(M))$ and $\reg(\rR \Gamma_{\ge r}(M))$ and $\tau_r(\rR \Gamma_{\le r}(M))$ are finite for all $r$.
\end{theorem}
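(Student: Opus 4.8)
The plan is to peel off the rank strata of $M$ one at a time by descending induction on $r$, while arranging that the regularity budget never accumulates. Removing the $r$-th stratum will cost $\reg_{r(d-r)+1}$ through Proposition~\ref{prop:taubd}, and $r(d-r)$ is maximized over $0 \le r \le d$ at $n = \lfloor \tfrac{1}{4} d^2 \rfloor$; so a single hypothesis $\reg_{n+1}(M) < \infty$ should bankroll all $d$ peeling steps---provided the complex we feed to Proposition~\ref{prop:taubd} at stage $r$ has its low-degree regularity controlled purely by $\reg_{n+1}(M)$ together with quantities that are \emph{fully} finite (hence free). Those fully-finite quantities will be the regularities of the derived saturations $\rR\Sigma_{\ge s}(M)$, supplied at no homological-degree cost by Proposition~\ref{prop:regsat}.

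Precisely, I would prove by descending induction on $r$, from $r = d$, the statement $H(r)$: \emph{$\reg(\rR\Sigma_{\ge r+1}(M))$ and $\tau_r(\rR\Gamma_{\le r}(M))$ are finite.} The base case $r = d$ holds because $\fa_d = 0$ gives $\rR\Gamma_{\le d}(M) = M$ and $\rR\Sigma_{\ge d+1}(M) = 0$, while $\tau_d(M) < \infty$ by Proposition~\ref{prop:taud} (using $\reg_0(M) < \infty$). Assume $H(r)$. Since $\rR\Gamma_{\le r}(M) \in \rD^b(A)$ has finite $\tau_r$, Proposition~\ref{prop:regsat} yields $\reg(\rR\Sigma_{\ge r}(\rR\Gamma_{\le r}(M))) < \infty$. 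Applying the triangulated functor $\rR\Sigma_{\ge r}$ to the recollement triangle $\rR\Gamma_{\le r}(M) \to M \to \rR\Sigma_{\ge r+1}(M) \to$, and using $\rR\Sigma_{\ge r}(\rR\Sigma_{\ge r+1}(M)) = \rR\Sigma_{\ge r+1}(M)$ (a complex saturated past rank $r$ has vanishing $\rR\Gamma_{<r}$), produces a triangle $\rR\Sigma_{\ge r}(\rR\Gamma_{\le r}(M)) \to \rR\Sigma_{\ge r}(M) \to \rR\Sigma_{\ge r+1}(M) \to$ whose two outer terms have finite regularity (the right one by $H(r)$), so $\reg(\rR\Sigma_{\ge r}(M)) < \infty$---the first half of $H(r-1)$. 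For the second half, write $\rR\Gamma_{\le r-1}(M) = \rR\Gamma_{<r}(\rR\Gamma_{\le r}(M))$ and apply Proposition~\ref{prop:taubd} to $P = \rR\Gamma_{\le r}(M)$, which lies in $\rD^{\ge 0, b}(A)$: one needs $\tau_r(P) < \infty$ (from $H(r)$) and $\reg_{r(d-r)+1}(P) < \infty$, and the latter follows from the triangle $\rR\Gamma_{\le r}(M) \to M \to \rR\Sigma_{\ge r+1}(M) \to$ together with its long exact $\Tor^A_\bullet(-, \bC)$-sequence: since $r(d-r)+1 \le n+1$, the $\Tor$'s of $M$ that are needed are supplied by $\reg_{n+1}(M) < \infty$, while $\rR\Sigma_{\ge r+1}(M)$ has \emph{fully} finite regularity by $H(r)$ and so costs nothing. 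This establishes $H(r-1)$, completing the induction.

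Running the induction down to $r = 0$ and one step further---where $\rR\Sigma_{\ge 0}$ is the identity since $\Mod_{A, <0} = 0$---gives $\reg(M) < \infty$. The rest is then formal: $\reg(\rR\Gamma_{\le r}(M)) < \infty$ for every $r$ from the triangle $\rR\Gamma_{\le r}(M) \to M \to \rR\Sigma_{\ge r+1}(M) \to$ (both $\reg(M)$ and $\reg(\rR\Sigma_{\ge r+1}(M))$ are now finite), $\tau_r(\rR\Gamma_{\le r}(M)) < \infty$ is the second half of $H(r)$, and $\reg(\rR\Gamma_{\ge r}(M)) < \infty$ follows similarly, since $\rR\Gamma_{\ge r}(M)$ sits in a distinguished triangle with $M$ and a complex built from the functors $\rR\Gamma_{\le s}$ and $\rR\Sigma_{\ge s}$ (see \cite{symu1}), all of whose regularities have been bounded.

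The main work---and the place I expect difficulty---is the careful handling of the derived recollement: establishing the compositional identities used above ($\rR\Gamma_{\le s}\rR\Gamma_{\le s'} = \rR\Gamma_{\le \min(s,s')}$, $\rR\Sigma_{\ge s}\rR\Sigma_{\ge s'} = \rR\Sigma_{\ge \max(s,s')}$, $\rR\Gamma_{<s}\rR\Sigma_{\ge s} = 0$) in the derived category, and tracking how the one-step degree shifts in the long exact $\Tor$-sequences interact with them; this is precisely why one needs the regularity of each $\rR\Sigma_{\ge r+1}(M)$ in \emph{all} homological degrees and not merely up to a bound. Relatedly, the heart of the degree count is that Proposition~\ref{prop:regsat} delivers fully finite regularity of the derived saturations at no homological-degree cost, so that the budgets for the successive strata do not add and the single bound $\reg_{n+1}(M) < \infty$ genuinely suffices.
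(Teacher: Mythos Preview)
Your argument is essentially the paper's own proof: your hypothesis $H(r)$ is exactly the conjunction of the paper's statements $\cA(r+1)$ and $\cB(r)$, and your inductive step invokes the same two triangles (the recollement triangle and its saturation) together with Propositions~\ref{prop:regsat} and~\ref{prop:taubd} in the same order. The only cosmetic difference is that where the paper writes the triangle $\rR\Pi_r(M) \to \rR\Sigma_{\ge r}(M) \to \rR\Sigma_{>r}(M) \to$ directly, you obtain the equivalent triangle by applying $\rR\Sigma_{\ge r}$ to the recollement triangle and using $\rR\Sigma_{\ge r}\rR\Sigma_{\ge r+1} = \rR\Sigma_{\ge r+1}$; and your caveat about needing the derived recollement identities is well placed, since both arguments rely on them.
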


\begin{proof}
Consider the following two statements, for $M \in \rD^{\ge 0, b}(A)$:
\begin{itemize}
\item[$\cA(r)$:] If $\reg_{n+1}(M)$ is finite then $\reg(\rR \Sigma_{\ge r}(M))$ is finite.
\item[$\cB(r)$:] If $\reg_{n+1}(M)$ is finite then $\tau_r(\rR \Gamma_{\le r}(M))$ is finite.
\end{itemize}
We prove these statements by descending induction on $r$. For $r>d$, we have $\rR \Sigma_{\ge r}(M)=0$, and so $\cA(r)$ obviously holds. For $r=d$, we have $\rR \Gamma_{\le r}(M)=M$, and $\tau_d(M)$ is finite since $\reg_0(M)$ is finite (Proposition~\ref{prop:taud}). Thus $\cB(d)$ holds.

Suppose now that both $\cA(r+1)$ and $\cB(r)$ hold. Since $\tau_{r}(\rR \Gamma_{\le r}(M))$ is finite by $\cB(r)$, Proposition~\ref{prop:regsat} gives that $\rR \Pi_r(M)=\rR \Sigma_{\ge r}(\rR \Gamma_{\le r}(M))$ has finite regularity. Consider the triangle
\begin{displaymath}
\rR \Pi_r(M) \to \rR \Sigma_{\ge r}(M) \to \rR \Sigma_{>r}(M) \to
\end{displaymath}
We have just explained that the left term has finite regularity. The right term has finite regularity by $\cA(r+1)$. It follows that the middle term has finite regularity, which proves $\cA(r)$.

Maintain the assumption that $\cA(r+1)$ and $\cB(r)$ hold. Consider the triangle
\begin{displaymath}
\rR \Gamma_{\le r}(M) \to M \to \rR \Sigma_{>r}(M) \to
\end{displaymath}
Since the right term has finite regularity by $\cA(r+1)$, we conclude that the left term has finite $\reg_{n+1}$. The left term also has finite $\tau_r$ by $\cB(r)$. It now follows from Proposition~\ref{prop:taubd} that $\rR \Gamma_{<r}(M)=\rR \Gamma_{<r}(\rR \Gamma_{\le r}(M))$ has finite $\tau_{r-1}$. Thus $\cB(r-1)$ holds.

We have thus shown that $\cA(r+1)$ and $\cB(r)$ implies $\cA(r)$ and $\cB(r-1)$. We conclude that $\cA(r)$ and $\cB(r)$ hold for all $r$. From $\cA(0)$, we see that $M$ itself has finite regularity. Combining this with $\cA(r)$ and the usual exact triangle, we see that $\rR \Gamma_{\le r}(M)$ also has finite regularity for all $r$.
\end{proof}

\begin{corollary}
Let $M$ be as in Theorem~\ref{thm:main2}. Then $\reg(M)$ is finite.
\end{corollary}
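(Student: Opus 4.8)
The plan is to deduce this directly from Theorem~\ref{thm:main2}, with no substantive new argument: the corollary is the special case in which the complex $\rR\Gamma_{\le r}(M)$ appearing there equals $M$ itself, which happens at the extreme value $r=d$.

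First I would observe that $\rR\Gamma_{\le d}(M)\cong M$ in $\rD^{\ge 0,b}(A)$. Since $E$ is $d$-dimensional, the determinantal ideal $\fa_d\subset A$ is generated by $\lw^{d+1}(E)\otimes\lw^{d+1}(\bV)$, which vanishes because $\lw^{d+1}(E)=0$; hence $\fa_d=0$ and $V(\fa_d)$ is all of $\Spec(A)$. Therefore every $A$-module is supported on $V(\fa_d)$, the functor $\Gamma_{\le d}$ is the identity on $\Mod_A$, and so $\rR\Gamma_{\le d}$ is the identity on $\rD^{\ge 0,b}(A)$. With this in hand, I would invoke Theorem~\ref{thm:main2}: its hypothesis is precisely that $\reg_{n+1}(M)$ is finite for $n=\lfloor\tfrac{1}{4}d^2\rfloor$, and one of its conclusions is that $\reg(\rR\Gamma_{\le d}(M))$ is finite, i.e.\ that $\reg(M)$ is finite.

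An equally quick alternative avoids $r=d$ and instead reads the result off the proof of Theorem~\ref{thm:main2}: the intermediate statement $\cA(0)$ established there says $\reg(\rR\Sigma_{\ge 0}(M))$ is finite, and $\rR\Sigma_{\ge 0}(M)\cong M$ because $\fa_{-1}=A$ forces $\Mod_{A,\le -1}=0$, whence $\Mod_{A,\ge 0}=\Mod_A$ and $\Sigma_{\ge 0}=S_{\ge 0}\circ T_{\ge 0}$ is the identity functor. Since the corollary is a one-line consequence of the theorem, there is no genuine obstacle; the only point demanding a little care is the degenerate-index bookkeeping ($\fa_d=0$ when $r=d$, and $\Mod_{A,\ge 0}=\Mod_A$ when $r=0$), which I have spelled out above. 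If one additionally wants Theorem~\ref{thm:main} from the introduction, one would then feed this through the sum-sup formalism of \S\ref{s:prelim}---a module lies in $\rD^{\ge 0,b}(A)$ concentrated in degree $0$, and $\reg_{n+1}(M)$ is finite iff $t_i(M)<\infty$ for all $i\le n+1$---but that step is not needed for the corollary itself.
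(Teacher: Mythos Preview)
Your proof is correct and matches the paper's own argument: the paper's proof is the single line ``We have $M=\Gamma_{\le d}(M)=\Sigma_{\ge 0}(M)$,'' which is exactly the pair of degenerate-index identifications you spell out. Your additional remarks (the $\cA(0)$ route and the connection to Theorem~\ref{thm:main}) are accurate but unnecessary for the corollary.
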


\begin{proof}
We have $M=\Gamma_{\le d}(M)=\Sigma_{\ge 0}(M)$.
\end{proof}

\begin{corollary}[Theorem~\ref{thm:deg}]
Let $M$ be a finitely generated $A$-module and let $c=[M]$ be the class of $M$ in $\rK(A)$. Then one can bound $\deg(c)$ by a function of $\reg_{n+1}(M)$.
\end{corollary}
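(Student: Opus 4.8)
The plan is to pass to the rank stratification and use the description of $\gr^r_{\mu}$ from \S\ref{s:prelim}. Recall from \cite{symu1} that $\rK(A) = \bigoplus_{r=0}^{d} \rK(\Mod_{A,r})$ and that $\rK(\Mod_{A,r}) \cong \Lambda \otimes \rK(\Gr_r(E))$, where under the equivalence $\Mod_{A,r}[\fa_r] \simeq \Mod_B^{\gen}$ the class of $\cF \otimes \bS_{\mu}(\cK)$ corresponds to $s_{\mu} \otimes [\cF]$; this is a legitimate isomorphism of Grothendieck groups because $\cF \mapsto \cF \otimes \bS_{\mu}(\cK)$ is exact ($\bS_{\mu}(\cK)$ being $\cO_Y$-flat) and the objects $\cF \otimes \bS_{\mu}(\cK)$ are precisely the graded pieces of the universal filtration on $\Mod_B^{\gen}$. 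Since $\deg$ is intrinsic to $\Lambda$ and is unaffected by a change of the basis $\{e_i\}$, we may compute it using the $s_{\mu}$-basis of $\Lambda$; in particular $\deg(c) = \max_{0 \le r \le d} \deg(c_r)$ whenever $c = \sum_r c_r$ with $c_r \in \Lambda \otimes \rK(\Gr_r(E))$, and if $c_r = \sum_{\mu} s_{\mu} \otimes g_{\mu}$ with $g_{\mu} \in \rK(\Gr_r(E))$ then $\deg(c_r) = \max \{ \vert \mu \vert : g_{\mu} \ne 0 \}$, since the $s_\mu$ form a basis and no cancellation can occur between distinct $\mu$.

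I would first prove the local statement: if $N$ is a bounded complex of finitely generated objects of $\Mod_{A,r}$ with $\sigma_r(N)$ finite, then $\deg([N]) \le \sigma_r(N)$. Writing $[N] = \sum_i (-1)^i [\rH^i(N)]$ and recalling that $\sigma_r(N)$ is the supremum of the $\sigma_r(\rH^i(N))$, it is enough to treat a single finitely generated module $N \in \Mod_{A,r}$ with $\sigma_r(N) = k$ finite. Then $F^r_{k+1} N = 0$, so the filtration $F^r_{\bullet} N$ is finite and $[N] = \sum_{\vert \mu \vert \le k} [\gr^r_{\mu}(N)]$ in $\rK(\Mod_{A,r})$. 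By Corollary~\ref{cor:sh-K} the object $\gr^r_{\mu}(N)$ corresponds to $\sh^r_{\mu}(N) \otimes \bS_{\mu}(\cK)$, hence $[\gr^r_{\mu}(N)] = s_{\mu} \otimes [\sh^r_{\mu}(N)]$; therefore $[N]$ is a $\bZ$-linear combination of the $s_{\mu} \otimes g$ with $\vert \mu \vert \le k$, and $\deg([N]) \le k$ as claimed.

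Next I would identify the component $c_r$ of $c = [M]$ with the class of such a complex. Since $M$ is finitely generated, each $\Tor^A_i(M, \bC)$ has finite degree, so $\reg_{n+1}(M)$ is finite and Theorem~\ref{thm:main2} applies. From the triangle $\rR \Gamma_{\le r}(M) \to M \to \rR \Sigma_{>r}(M) \to$ we get $[M] = [\rR \Gamma_{\le r}(M)] + [\rR \Sigma_{>r}(M)]$; the complex $\rR \Gamma_{\le r}(M)$ has finitely generated cohomology (vanishing outside finitely many degrees) supported on $V(\fa_r)$, so its class lies in $\bigoplus_{s \le r} \rK(\Mod_{A,s})$, while by the semi-orthogonal decomposition of \cite{symu1} the class of $\rR \Sigma_{>r}(M)$ lies in $\bigoplus_{s > r} \rK(\Mod_{A,s})$. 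Hence the $\rK(\Mod_{A,r})$-component $c_r$ of $[M]$ equals that of $[\rR \Gamma_{\le r}(M)]$, and applying the exact functor $T_{\ge r}$ (which annihilates $\Mod_{A,<r}$ and restricts to the identity on $\Mod_{A,r}$) realizes this component as $[T_{\ge r} \rR \Gamma_{\le r}(M)] \in \rK(\Mod_{A,r})$. By Theorem~\ref{thm:main2}, $\tau_r(\rR \Gamma_{\le r}(M))$ is finite, and since $\sigma_r \le \tau_r$ we have $\sigma_r(T_{\ge r} \rR \Gamma_{\le r}(M)) \le \tau_r(\rR \Gamma_{\le r}(M))$. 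Combining with the previous paragraph, $\deg(c_r) \le \tau_r(\rR \Gamma_{\le r}(M))$, so
\[
\deg(c) \le \max_{0 \le r \le d} \tau_r(\rR \Gamma_{\le r}(M)).
\]
To make this a bound by a function of $\reg_{n+1}(M)$, note that $M \mapsto \reg_{n+1}(M)$ and $M \mapsto \tau_r(\rR \Gamma_{\le r}(M))$ are invariants of all $A$-modules (not only finitely generated ones) satisfying the sum-sup condition; since Theorem~\ref{thm:main2} says the second is finite whenever the first is, the first Proposition of \S\ref{s:prelim} produces a function $f_r$ with $\tau_r(\rR \Gamma_{\le r}(M)) \le f_r(\reg_{n+1}(M))$, and $f = \max_r f_r$ does the job.

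The point needing the most care is the bookkeeping against \cite{symu1}: that the canonical isomorphism $\rK(\Mod_{A,r}) \cong \Lambda \otimes \rK(\Gr_r(E))$ built into the definition of $\deg$ agrees, up to a change of $\Lambda$-basis that is unitriangular for the degree filtration (hence harmless for $\deg$), with the one sending $[\cF \otimes \bS_{\mu}(\cK)]$ to $s_{\mu} \otimes [\cF]$, and that the class of $\rR \Sigma_{>r}(M)$ really is concentrated in the summands $\rK(\Mod_{A,s})$ with $s > r$. Both follow from the structure theory recalled in \S\ref{s:bg}, but deserve to be stated explicitly. The remaining ingredients — finiteness of $F^r_{\bullet} N$ when $\sigma_r(N)$ is finite, exactness of $\gr^r_{\mu}$ and $\sh^r_{\mu}$, and additivity of classes in triangles — are routine given \S\ref{s:prelim}.
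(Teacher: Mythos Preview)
Your argument is correct and follows essentially the same route as the paper's proof: both use Theorem~\ref{thm:main2} to bound $\tau_r(\rR \Gamma_{\le r}(M))$, then invoke the finite filtration with graded pieces $\gr^r_{\mu}$ together with Corollary~\ref{cor:sh-K} to see that only partitions $\mu$ with $\vert \mu \vert \le \tau_r(\rR \Gamma_{\le r}(M))$ contribute to the $r$th component of $[M]$. Your write-up is somewhat more explicit than the paper's---you spell out the triangle identifying $c_r$, use $T_{\ge r}$ rather than $\rR\Pi_r$ to project (an equivalent choice at the level of $\rK(\Mod_{A,r})$), and appeal to the sum-sup proposition to upgrade ``finite whenever $\reg_{n+1}$ is finite'' to an actual bounding function---but these are expository refinements rather than a different strategy.
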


\begin{proof}
Consider the identification $\rK(A) = \bigoplus_{r=0}^d \Lambda \otimes \rK(\Gr_r(E))$. The projection $\rK(A) \to \Lambda \otimes \rK(\Gr_r(E))$ is computed by applying the functor $\rR \Pi_r = \rR \Sigma_{\ge r} \rR \Gamma_{\le r}$. By Theorem~\ref{thm:main2}, $\tau_r(\rR \Gamma_{\le r}(M))$ is bounded by a function of $\reg_{n+1}(M)$. The K-class of $\rR \Gamma_{\le r}(M)$ is the same as its associated graded with respect to any finite filtration. We use the filtration $\gr^r_\mu$, which is finite since we only need to take $\mu$ with $|\mu|<\tau_r(\rR \Gamma_{\le r}(M))$. By Corollary~\ref{cor:sh-K}, $\gr^r_\mu$ corresponds to $\sh^r_\mu(M) \otimes \bS_\mu(\cK)$ under the equivalence $\Psi \colon \Mod_{A,r}[\fa_r] \to \Mod_B^\gen$, and the degree of this element only depends on $\bS_\mu(\cK)$. Since only finitely many possibilities occur for $\mu$, we get that $\deg(c)$ is bounded by a function of $\reg_{n+1}(M)$.
\end{proof}

\section{Additional results} \label{s:add}

\subsection{General coefficients}

Let $X$ be a complex scheme and $\cE$ a locally free sheaf of constant rank $d$ on $X$. Let $M$ be a $\bA(\cE)$-module.

\begin{proposition} \label{prop:reduction}
Theorem~\ref{thm:main} hold for $\bA(\cE)$-modules.
\end{proposition}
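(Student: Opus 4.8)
The plan is to reduce, in two steps, to the $\bC$-coefficient statement already established (Theorem~\ref{thm:main}), as anticipated in the introduction. Throughout, for a $\bA(\cE)$-module $M$ I take $t_i(M)$ to be the top degree $j$ in which the $\GL_\infty$-equivariant graded $\cO_X$-module $\Tor_i^{\bA(\cE)}(M,\cO_X)$ is non-zero (where $\cO_X \cong \bA(\cE)/\bA(\cE)_+$), and $\reg(M)=\sup_i\bigl(t_i(M)-i\bigr)$; over $\bC$ this recovers the definition via minimal free resolutions from the introduction. The two reductions are: \emph{(i)} cover $X$ by affine opens on which $\cE$ is free, reducing to the case $X=\Spec(R)$ and $\cE=R^d$, i.e.\ to modules over $A_R := A\otimes_\bC R$ where $A=\bA(E)$ with $\dim E=d$ and $R$ is an arbitrary commutative $\bC$-algebra; and \emph{(ii)} identify the invariants of an $A_R$-module with those of its underlying $A$-module, so that Theorem~\ref{thm:main} applies directly. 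It is essential here that Theorem~\ref{thm:main} carries no finiteness hypothesis, since the underlying $A$-module of an $A_R$-module is in general very far from finitely generated.

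Step \emph{(ii)} is the heart of the matter. Write $A_R=A\otimes_\bC R$, so that $R=A_R\otimes_A\bC$. Since $\bC$ is a field, tensoring the Koszul resolution $A\otimes\lw^\bullet(E\otimes\bV)\to\bC$ of $\bC$ over $A$ with $R$ over $\bC$ preserves exactness, yielding a free resolution $A_R\otimes\lw^\bullet(E\otimes\bV)\to R$ of $R$ over $A_R$. Tensoring this with an $A_R$-module $M$ over $A_R$ produces the complex $M\otimes_\bC\lw^\bullet(E\otimes\bV)$, which is exactly the complex computing $\Tor^A_\bullet(M,\bC)$ for $M$ regarded as an $A$-module by restriction of scalars. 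Hence there is a natural isomorphism
\[
\Tor_i^{A_R}(M, R)\;\cong\;\Tor_i^{A}(M, \bC)
\]
of graded polynomial $\GL_\infty$-representations. Therefore $t_i^{A_R}(M)$ equals $t_i^A$ of the underlying $A$-module, and likewise for $\reg$; applying Theorem~\ref{thm:main} to the underlying $A$-module then bounds $\reg(M)$ by a function of the $t_i(M)$ for $i\le 1+\lfloor\tfrac{1}{4}d^2\rfloor$ — in fact by the very same function.

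For step \emph{(i)}, choose an affine open cover $\{U_k=\Spec(R_k)\}$ of $X$ with $\cE|_{U_k}$ free, so $\bA(\cE)|_{U_k}\cong A_{R_k}$. Since restriction to an open subset is flat, $\Tor_i^{\bA(\cE)}(M,\cO_X)|_{U_k}\cong\Tor_i^{A_{R_k}}(M|_{U_k},R_k)$; a quasi-coherent sheaf vanishes iff it does on each $U_k$, so for every degree $j$ the sheaf $\Tor_i^{\bA(\cE)}(M,\cO_X)_j$ is non-zero iff some restriction to a $U_k$ is, giving $t_i(M)=\sup_k t_i(M|_{U_k})$ and $\reg(M)=\sup_k\reg(M|_{U_k})$. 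Thus if the $t_i(M)$ are finite for $i\le 1+\lfloor\tfrac{1}{4}d^2\rfloor$, then so are all the $t_i(M|_{U_k})$ (and bounded by the $t_i(M)$), step \emph{(ii)} bounds each $\reg(M|_{U_k})$ by a fixed function of these, and taking the supremum over $k$ bounds $\reg(M)$ (after replacing the bounding function of Theorem~\ref{thm:main} by a monotone majorant, if needed). I expect the only genuine subtlety, rather than a deep obstacle, to be precisely the bookkeeping forced by a non-affine — or merely non-local — base: minimal free resolutions of $\bA(\cE)$-modules need not exist there, since projective $\cO_X$-modules need not be free and $M/\bA(\cE)_+M$ need not even be $\cO_X$-flat, which is why $t_i$ and $\reg$ must be defined via $\Tor$ from the outset; with that in hand, every compatibility invoked above is a routine instance of flat base change.
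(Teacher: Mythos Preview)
Your proof is correct and follows the same two-step outline as the paper --- cover by affines trivializing $\cE$, then compare Tor over $A_R$ with Tor over $A$ --- but your execution of step~\emph{(ii)} is genuinely different and in fact cleaner. The paper proves only the equality $\max\deg\Tor_n^{A_R}(M,R)=\max\deg\Tor_n^{A}(M,\bC)$ by induction on $n$: the base case $n=0$ is argued via the $\FI_d$-module interpretation (showing generation degree is insensitive to whether coefficients are $R$ or $\bC$), and the inductive step passes to the first syzygy via a minimal presentation $0\to N\to F\to M\to 0$. You instead observe that base-changing the Koszul resolution $A\otimes\lw^{\bullet}(E\otimes\bV)\to\bC$ along the flat map $\bC\to R$ yields a free $A_R$-resolution of $R$, and that tensoring either resolution with $M$ gives literally the same complex $M\otimes_\bC\lw^{\bullet}(E\otimes\bV)$; this produces a natural isomorphism $\Tor_i^{A_R}(M,R)\cong\Tor_i^A(M,\bC)$ of graded $\GL_\infty$-representations, which is strictly stronger than what the paper establishes. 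Your argument avoids both the induction and the $\FI_d$ language, at the cost of invoking the Koszul resolution; the paper's approach has the mild advantage of working with any free resolution and not relying on the specific shape of $A$, but in this setting that generality is not needed. Step~\emph{(i)} is essentially identical in both treatments.
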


\begin{proof}
First we handle the case that $X$ is affine, so that $X = \Spec(R)$ for a $\bC$-algebra $R$, and $\cE \cong R^d$ is free. Let $M$ be a $A_R = \bA(R^d)$-module. In that case, we prove that 
\[
\max\deg \Tor_n^{A_R}(M,R) = \max\deg \Tor_n^{A_\bC}(M,\bC),
\]
where $\max\deg$ denotes the maximum degree in which a graded module is nonzero, by induction on $n$.

When $n=0$, we are measuring the maximum degree needed for a minimal generating set of $M$ with coefficients either in $R$ or $\bC$. Clearly, we have 
\[
\max\deg \Tor_0^{A_R}(M,R) \le \max\deg \Tor_0^{A_\bC}(M,\bC)
\]
since if $M$ can be generated in degree $\le D$ with $\bC$-coefficients, the same is true with $R$-coefficients. Conversely, suppose $M$ can be generated in degree $\le D$ with $R$-coefficients. We will think of $M$ as an $\FI_d$-module. This means that any element in degree $>D$ can be written as a linear combination $\sum_i r_i \phi_i (x_i)$ where $r_i \in R$, $\phi_i$ is an $\FI_d$-morphism, and $x_i \in M_{\le D}$. Since the $\FI_d$-morphisms are all $R$-linear, this is the same as $\sum_i \phi_i(r_ix_i)$ and $r_i x_i$ has the same degree as $x_i$, so $M$ can also be generated in degree $\le D$ with $\bC$-coefficients.

For the case of $n>0$, pick a short exact sequence of $A_R$-modules
\[
0 \to N \to F \to M \to 0
\]
where $F$ is a free $A_R$-module and $F \to M$ is a minimal surjection. Then $\Tor_n^{A_R}(M,R) = \Tor_{n-1}^{A_R}(N,R)$. By induction on $n$, we have $\max \deg \Tor_{n-1}^{A_R}(N,R) = \max \deg \Tor_{n-1}^{A_\bC}(N,\bC)$. The notion of minimal morphism is insensitive to the coefficients (it amounts to knowing the map has positive degree), so we also conclude that $\Tor_n^{A_\bC}(M,\bC) = \Tor_{n-1}^{A_\bC}(N,\bC)$, which finishes the proof in the case that $X$ is affine and $\cE$ is free.

Finally, for the general case, we can pick an open affine covering $\{U_i\}_{i \in I}$ of $X$ such that $\cE$ is free over each $U_i$. Since the calculation of Tor is local, we have 
\[
\max \deg \Tor_n^{\bA(\cE)}(M, \cO_X) = \max_{i \in I} \max \deg \Tor_n^{\bA(\cE|_{U_i})}(M|_{U_i}, \cO_{U_i}),
\]
so that it suffices to know that the theorem holds for each open affine $U_i$.
\end{proof}

\subsection{Proof of Theorem~\ref{thm:stable}}

Recall the statement: for any $n$, $\reg(M_n)$ is bounded by a function of $\reg(M_k)$ where $k =  \ell(M) + \lfloor \tfrac{1}{4} d^2 \rfloor+d+1$. To see this, note that $\Tor_i^A(M,\bC)$ is a subquotient of $\bigwedge^i(\bC^d \otimes \bV) \otimes A \otimes M$, and hence
\begin{displaymath}
\ell(\Tor_i^A(M,\bC)) \le i + d + \ell(M).
\end{displaymath}
Thus for $i \le N=\lfloor \tfrac{1}{4} d^2 \rfloor+1$ we have $\ell(\Tor_i^A(M,\bC)) \le k$, and so one can evaluate $\Tor_i^A(M,\bC)$ on $\bC^k$ without losing information. It follows that $\reg_N(M) = \reg_N(M_k) \le \reg(M_k)$. By Theorem~\ref{thm:main}, $\reg(M)$ is bounded by a function of $\reg_N(M)$, and thus a function of $\reg(M_k)$. Since $\reg(M_n) \le \reg(M)$ for all $n$, the result follows.


\begin{thebibliography}{Stacks}

\bibitem[Ch]{church} Thomas Church, Bounding the homology of FI-modules, \arxiv{1612.07803v1}.

\bibitem[CE]{ce} Thomas Church, Jordan S. Ellenberg, Homological properties of FI-modules and stability, {\it Geom. Topol.}, to appear, \arxiv{1506.01022v2}.

\bibitem[NSS]{regthm} Rohit Nagpal, Steven V Sam, Andrew Snowden, Regularity of FI-modules and local cohomology, \arxiv{1703.06832v2}.

\bibitem[SS1]{symc1} Steven~V Sam, Andrew Snowden, GL-equivariant modules over polynomial rings in infinitely many variables, {\it Trans. Amer. Math. Soc.} {\bf 368} (2016), 1097--1158, \arxiv{1206.2233v3}.

\bibitem[SS2]{expos}
Steven~V Sam, Andrew Snowden, Introduction to twisted commutative algebras, \arxiv{1209.5122v1}.

\bibitem[SS3]{catgb} Steven~V Sam, Andrew Snowden, Gr\"obner methods for representations of combinatorial categories, {\it J. Amer. Math. Soc.} {\bf 30} (2017), 159--203, \arxiv{1409.1670v3}.

\bibitem[SS4]{symu1} Steven~V Sam, Andrew Snowden, GL-equivariant modules over polynomial rings in infinitely many variables. II, \arxiv{1703.04516v1}.

\bibitem[Sn]{delta}
Andrew Snowden, Syzygies of Segre embeddings and $\Delta$-modules, {\it Duke Math.\ J.} {\bf 162} (2013), no.~2, 225--277, \arxiv{1006.5248v4}.

\bibitem[Stacks]{stacks}
Stacks Project, \url{http://stacks.math.columbia.edu}, 2017.

\bibitem[Wey]{weyman} Jerzy Weyman, {\it Cohomology of Vector Bundles and Syzygies}, Cambridge Tracts in Mathematics {\bf 149}, Cambridge University Press, Cambridge, 2003.

\end{thebibliography}
\end{document}